\newtheorem{Theorem}{Theorem}
\theoremstyle{plain}
\newtheorem{Conjecture}{Conjecture}
\newtheorem{Example}{Example}
\newtheorem{Lemma}{Lemma}
\newtheorem{Proposition}{Proposition}
\newtheorem{Remark}{Remark}
\numberwithin{equation}{section}
\let\epsilon\varepsilon
\let\phi=\varphi
\let\kappa=\varkappa
\def\opn#1#2{\def#1{\operatorname{#2}}} 
\opn\sdepth{sdepth}
\opn\depth{depth}
\opn\chara{char} \opn\length{\ell} \opn\pd{pd} \opn\rk{rk}
\opn\projdim{proj\,dim} \opn\injdim{inj\,dim} \opn\rank{rank}
\opn\depth{depth} \opn\sdepth{sdepth} \opn\fdepth{fdepth}
\opn\grade{grade} \opn\height{height} \opn\embdim{emb\,dim}
\opn\codim{codim}  \opn\min{min} \opn\max{max}
\opn\Tr{Tr} \opn\bigrank{big\,rank}
\opn\superheight{superheight}\opn\lcm{lcm}
\opn\trdeg{tr\,deg}
\opn\reg{reg} \opn\lreg{lreg} \opn\ini{in} \opn\lpd{lpd}
\opn\size{size}
\opn\Spec{Spec} \opn\Supp{Supp} \opn\supp{supp} \opn\Sing{Sing}
\opn\Ass{Ass} \opn\Min{Min}
\opn\Ann{Ann} \opn\Rad{Rad} \opn\Soc{Soc}
\opn\Im{Im} \opn\Ker{Ker} \opn\Coker{Coker} \opn\Am{Am}
\opn\Hom{Hom} \opn\Tor{Tor} \opn\Ext{Ext} \opn\End{End}
\opn\Aut{Aut} \opn\id{id}  \opn\deg{deg}
\opn\nat{nat}
\opn\pff{pf}
\opn\Pf{Pf} \opn\GL{GL} \opn\SL{SL} \opn\mod{mod} \opn\ord{ord}
\opn\Gin{Gin} \opn\Hilb{Hilb}
\def\qed{\ifhmode\textqed\fi
      \ifmmode\ifinner\quad\qedsymbol\else\dispqed\fi\fi}
\def\textqed{\unskip\nobreak\penalty50
       \hskip2em\hbox{}\nobreak\hfil\qedsymbol
       \parfillskip=0pt \finalhyphendemerits=0}
\def\dispqed{\rlap{\qquad\qedsymbol}}
\begin{document}
\title{\bf Stanley depth on five generated, squarefree, monomial ideals  }

\author{ Dorin Popescu}

\thanks{The  support from  grant ID-PCE-2011-1023 of Romanian Ministry of Education, Research and Innovation is gratefully acknowledged.}

\maketitle
\begin{abstract} Let $I\supsetneq J$ be  two  squarefree monomial ideals of a polynomial algebra over a field generated in degree $\geq d$, resp. $\geq d+1$ .  Suppose that  $I$ is either generated  by four squarefree monomials of degrees $d$ and others of degrees $\geq d+1$, or by five special monomials of degrees $d$. If  the Stanley depth of $I/J$ is $\leq d+1$ then the usual depth of $I/J$ is $\leq d+1$ too.

 \noindent
  {\it Key words } : Monomial Ideals,  Depth, Stanley depth.\\
 {\it 2010 Mathematics Subject Classification: Primary 13C15, Secondary 13F20, 13F55,
13P10.}
\end{abstract}

\section*{Introduction}
Let $K$ be a field and $S=K[x_1,\ldots,x_n]$ be the polynomial $K$-algebra in $n$ variables. Let  $I\supsetneq J$ be  two   squarefree monomial ideals of $S$ and suppose that  $I$ is generated by squarefree monomials of degrees $\geq d$   for some positive integer $d$.  After  a multigraded isomorphism we may assume either that $J=0$, or $J$ is generated in degrees $\geq d+1$.

 Let $P_{I\setminus J}$  be the poset of all squarefree monomials of $I\setminus J$  with the order given by the divisibility. Let $ P$ be a partition of  $P_{I\setminus J}$ in intervals $[u,v]=\{w\in  P_{I\setminus J}: u|w, w|v\}$, let us say   $P_{I\setminus J}=\cup_i [u_i,v_i]$, the union being disjoint.
Define $\sdepth  P=\min_i\deg v_i$ and  the  {\em Stanley depth} of $I/J$ given by $\sdepth_SI/J=\max_{ P} \sdepth  P$, where $ P$ runs in the set of all partitions of $P_{I\setminus J}$ (see  \cite{HVZ}, \cite{S}).
 Stanley's Conjecture says that  $\sdepth_S I/J\geq \depth_S I/J$.

 In spite of so many papers on this subject (see \cite{HVZ}, \cite{P2}, \cite{R}, \cite{BKU}, \cite{HPV}, \cite{Sh}, \cite{P0}, \cite{Is}, \cite{Ci}, \cite{P}, \cite{PZ2}) Stanley's Conjecture remains open after more than thirty years. Meanwhile, new concepts as for example the Hilbert depth (see \cite{BKU}, \cite{U}, \cite{IM}) proved to be helpful in this area (see for instance \cite[Theorem 2.4]{Sh}). Using a Theorem of Uliczka \cite{U} it was shown in \cite{AP1}  that for $n=6$ the Hilbert depth of $S\oplus m$ is strictly bigger than the Hilbert depth of $m$, where $m$ is the maximal graded ideal of $S$. Thus  for $n=6$ one could also expect
 $\sdepth_S(S\oplus m)>\sdepth_Sm$, that is a negative answer for a Herzog's question.  This was stated later by Ichim and Zarojanu \cite{IZ}.

 Suppose that $I \subset S$ is minimally generated by some squarefree monomials $f_1,\ldots,f_r$ of degrees $d$,  and a  set $E$  of squarefree monomials of degree $\geq d+1$. By \cite[Proposition 3.1]{HVZ} (see  \cite[Lemma 1.1]{P}) we have $\depth_S I/J\geq d$. Thus if $\sdepth_S I/J=d$ then Stanley's Conjecture says that  $\depth_S I/J = d$. This is exactly what
  \cite[Theorem 4.3]{P}) states.
   Next step in studying Stanley's Conjecture is to prove the following weaker one.

\begin{Conjecture} \label{c}   Suppose that $I \subset S$ is minimally generated by some squarefree monomials $f_1,\ldots,f_r$ of degrees $d$,  and a  set $E$  of squarefree monomials of degree $\geq d+1$. If  $\sdepth_S I/J=d+1$ then $\depth_S I/J \leq d+1$.
\end{Conjecture}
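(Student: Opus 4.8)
\medskip

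The plan is to prove the contrapositive of Conjecture~\ref{c}: assuming $\depth_S I/J\ge d+2$ --- recall $\depth_S I/J\ge d$ always by \cite[Proposition 3.1]{HVZ}, and $\sdepth_S I/J\ge d$ since every squarefree monomial of $I\setminus J$ has degree $\ge d$ --- I want to show $\sdepth_S I/J\ge d+2$, which together with $\sdepth_S I/J\ge d$ gives $\sdepth_S I/J\ne d+1$, as required. After a multigraded isomorphism $J=0$ or $J\ne 0$ is generated in degrees $\ge d+1$, and one applies the standard normalizations of $J$. The key preliminary reduction: if some variable $x_i$ divides no minimal generator of $I$ and no minimal generator of $J$, then $I/J\cong (I'/J')\otimes_{S'}S$ with $S'=K[x_1,\dots,\widehat{x_i},\dots,x_n]$ and $I',J'\subset S'$, so $\depth$ and $\sdepth$ each drop by exactly one on passing to $S'$; hence $\depth_{S'}I'/J'\ge d+1$, so $\sdepth_{S'}I'/J'\ge d+1$ by the contrapositive of \cite[Theorem 4.3]{P}, so $\sdepth_S I/J\ge d+2$ and we are done. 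Thus we may assume every variable divides some minimal generator of $I$ or of $J$.

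The engine is the multigraded short exact sequence attached to a variable $x_n$: writing $S'=K[x_1,\dots,x_{n-1}]$, with $\bar I,\bar J$ the images of $I,J$ modulo $x_n$ and $(I:x_n),(J:x_n)$ the colon ideals,
\[
0\longrightarrow\bigl((I:x_n)/(J:x_n)\bigr)(-e_n)\xrightarrow{\ \cdot x_n\ }I/J\longrightarrow \bar I/\bar J\longrightarrow 0 .
\]
Given $\depth_S I/J\ge d+2$, the Depth Lemma pins down the depths of the two outer terms, while the inequalities of Rauf \cite{R} (and their refinements in \cite{Ci}) bound $\sdepth_S I/J$ below by the minimum of $\sdepth_{S'}\bar I/\bar J$ and $1+\sdepth_{S'}\overline{(I:x_n)}/\overline{(J:x_n)}$, and bound the two pieces above in terms of $\sdepth_S I/J$. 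The induction runs on $n$ and, inside that, on the number $r$ of degree-$d$ generators: one chooses $x_n$ so that each outer term is again a quotient of squarefree monomial ideals to which the inductive hypothesis applies --- generated in the correct degrees, with at most four, resp.\ five special, generators in the critical degree --- and then either that hypothesis yields the needed lower bound on the piece's Stanley depth, or the piece's Stanley depth has already been forced down to its minimal-degree bound, in which case \cite[Theorem 4.3]{P} identifies its depth and the Depth Lemma feeds this back to contradict $\depth_S I/J\ge d+2$.

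The real difficulty --- and the point where ``four generators of degree $d$'' versus ``five \emph{special} generators of degree $d$'' is forced on us --- is keeping control of the minimal generators of $(I:x_n)$ and of $\bar I$ along the recursion. Colonizing by $x_n$ lowers the degree of any $f_i$ divisible by $x_n$ (so the critical degree can slide from $d$ to $d-1$), and, more dangerously, turns each degree-$(d+1)$ element $g\in E$ with $x_n\mid g$ into a potential \emph{new} generator $g/x_n$ of the critical degree, so ``at most four'' is not preserved for free. One would handle this by exploiting that with only four --- or five specially shaped --- degree-$d$ generators, the divisibility patterns of the $f_i$ by a single variable are very limited: either a variable $x_n$ dividing none of the $f_i$ can be chosen, giving a clean recursion on the colon side, or the $f_i$ jointly involve so few variables that, combined with the reduction above, only a bounded list of configurations remains, each settled directly --- and the split ``$\le 4$ general'' versus ``$5$ special'' is exactly the threshold at which the gluing of Stanley pieces across the decomposition $P_{I\setminus J}=P_{\bar I\setminus\bar J}\sqcup x_n P_2$ can still be arranged with all intervals of top degree $\ge d+2$. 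I expect this combinatorial bookkeeping, not any single homological step, to be the main obstacle.
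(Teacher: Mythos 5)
First, a point of logic about the target: the statement you are proving is a \emph{Conjecture}, and the paper itself establishes it only in the two cases of Theorem \ref{m} ($r\le 4$, and $r=5$ under the special hypothesis on a variable $x_t$); it is open in general. Your proposal, as written, is a strategy outline rather than a proof of either the conjecture or of the cases the paper actually handles. Moreover your strategy runs in the opposite direction from the paper's. You take the contrapositive and try to show that $\depth_S I/J\ge d+2$ forces $\sdepth_S I/J\ge d+2$, i.e.\ you must \emph{construct} a Stanley decomposition with all intervals ending in degree $\ge d+2$. That is the hard direction (it is Stanley's Conjecture at level $d+2$ for these modules), and nothing in the literature you cite produces such partitions from a depth hypothesis. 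The paper instead assumes $\sdepth_S I/J\le d+1$, starts from a partition of sdepth $d+2$ on the smaller module $I_b/J_b$ (condition (3) of Proposition \ref{ml}), and modifies it through the ``path'' machinery (Lemmas \ref{i0}--\ref{v}) until it either extends to $I/J$ --- a contradiction --- or isolates a subquotient $I'/J'$ whose Stanley depth is $\le d+1$ by \cite[Lemma 2.2]{R} and whose depth then controls $\depth_S I/J$ via the Depth Lemma, together with Lemma \ref{dep} to certify $\depth_S I/(J,I')\ge d+1$.

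The concrete gap is in your inductive step, and you essentially concede it yourself. You never show that a variable $x_n$ with the required properties exists: all variables may divide some $f_i$, the colon $(I:x_n)$ may acquire generators of degree $d-1$, and each $g\in E$ with $x_n\mid g$ contributes $g/x_n$ as a possible new critical-degree generator, so neither ``$r\le 4$'' nor the special shape for $r=5$ is preserved under the recursion. You assert that ``only a bounded list of configurations remains, each settled directly'' and that $r=4$ versus $5$ is ``exactly the threshold,'' but give no argument for either claim; this is precisely where all of the paper's content lies (the failure of Lemma \ref{dep} for $r=5$ exhibited in Example \ref{e3}, the obstruction $\omega_1\in (C_3\setminus W)\cap (E)$ of Remark \ref{M''}, the special configurations $f_i=vx_i$ treated in Lemma \ref{vi} and in the three cases of Theorem \ref{m1}). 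A further small inaccuracy: \cite[Lemma 2.2]{R} yields $\sdepth_S I/J\ge\min$ of the sdepths of the two pieces of a poset splitting; it gives an upper bound on one piece only after one already knows the other piece has large sdepth, so it does not ``bound the two pieces above in terms of $\sdepth_S I/J$'' in the free-standing way your induction needs. As it stands the proposal identifies the right difficulties but does not resolve any of them.
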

This conjecture is studied in \cite{PZ}, \cite{PZ1}, \cite{PZ2} either when $r=1$, or when $E=\emptyset$ and  $r\leq 3$. Recently, these results were improved in the next theorem.

\begin{Theorem}\label{ap} (A. Popescu, D.Popescu \cite[Theorem 0.6]{AP})  Let   $C$ be the set of the squarefree monomials of degree  $d+2$ of $I\setminus J$. Conjecture \ref{c} holds  in each of the following two  cases:
\begin{enumerate}
\item{} $r\leq 3$,
\item{} $r=4$, $E=\emptyset$ and there exists $c\in C$ such that $\supp c\not \subset \cup_{i\in [4]} \supp f_i$.
\end{enumerate}
\end{Theorem}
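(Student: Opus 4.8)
\emph{Proof proposal.}
The plan is to prove the equivalent implication: if $\depth_S I/J\ge d+2$ then $\sdepth_S I/J\ge d+2$. This suffices, because $\sdepth_S I/J\ge d$ (the partition of $P_{I\setminus J}$ into singletons already has $\sdepth=d$) and $\depth_S I/J\ge d$ by \cite[Proposition 3.1]{HVZ}, so once the implication is known, $\sdepth_S I/J=d+1$ can occur only when $\depth_S I/J\le d+1$. Assuming $\depth_S I/J\ge d+2$, I would try to build a partition of $P_{I\setminus J}$ all of whose intervals have tops of degree $\ge d+2$. First observe that the squarefree monomials of degree $d$ in $I\setminus J$ are exactly $f_1,\dots,f_r$ (a degree-$d$ monomial of $I$ is a multiple of a minimal generator of degree $\le d$, hence of some $f_i$, hence equals it, and $f_i\notin J$ because $J$ is generated in degrees $\ge d+1$), and that each $f_i$ is a minimal element of $P_{I\setminus J}$. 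Hence in any good partition each $f_i$ is the bottom of an interval $[f_i,v_i]$ with $\deg v_i\ge d+2$; the cheapest such interval has $v_i=c_i:=f_ix_ax_b$ of degree $d+2$ with $f_ix_a,\,f_ix_b,\,c_i\in I\setminus J$. So the construction splits into: (i) choosing degree-$(d+2)$ monomials $c_i\in I\setminus J$ above the $f_i$ so that the intervals $[f_i,c_i]$ are pairwise disjoint, and (ii) partitioning the leftover poset $P_{I\setminus J}\setminus\bigcup_i[f_i,c_i]$, all of whose monomials have degree $\ge d+1$, into intervals with tops of degree $\ge d+2$ — and for the latter it is enough to match each degree-$(d+1)$ monomial of the leftover to a degree-$(d+2)$ monomial above it, putting everything of degree $\ge d+2$ into singletons. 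The hypothesis $\depth_S I/J\ge d+2$ is what should guarantee these matchings exist.

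Next I would run an induction on $n$, the statement kept uniform in $(d,r,E)$. For the inductive step I would use the standard short exact sequences of multigraded modules attached to a variable $x=x_n$ (as in \cite{P}), which relate $I/J$ to ideals over $S'=K[x_1,\dots,x_{n-1}]$ built from the monomials of $I$ and $J$ that are free of, respectively divisible by, $x_n$, and which allow a Stanley partition of $P_{I\setminus J}$ to be assembled from good partitions of the two smaller posets (one of them multiplied by $x_n$). Applying the Depth Lemma to these sequences converts $\depth_S I/J\ge d+2$ into the analogous depth inequalities for the smaller modules, so by induction those already carry good partitions; the configurations that cannot be reduced are the ones settled in \cite{PZ}, \cite{PZ1}, \cite{PZ2} ($r=1$ with arbitrary $E$, and $E=\emptyset$ with $r\le 3$) together with the two genuinely new ones, $r\in\{2,3\}$ with $E\ne\emptyset$ and $r=4$ with $E=\emptyset$. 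In the reduction one has to keep track of how $(d,r,E)$ change in each branch and check that the hypotheses persist where they are needed — in case (2), that the witness $c\in C$ with $\supp c\not\subset\bigcup_{i\in[4]}\supp f_i$ either survives in the relevant branch or is not needed there.

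In the two new base configurations everything comes down to the combinatorics of (i)–(ii), i.e.\ fitting the intervals inside $P_{I\setminus J}$, i.e.\ dodging $J$. For $r\le 3$ there are enough variables and enough degree-$(d+2)$ monomials to route the generators disjointly, and the leftover poset is then finished by the matching described above; here $\depth_S I/J\ge d+2$ is exactly what rules out a Hall deficiency, since a family of degree-$(d+1)$ monomials of $I\setminus J$ with too few degree-$(d+2)$ neighbours would, via the exact sequences above, force $\depth_S I/J\le d+1$. For $r=4$, $E=\emptyset$ the obstacle is that the four intervals $[f_i,c_i]$ may be forced to collide, and this is precisely what the extra hypothesis is designed to prevent: writing $c=f_{i_0}x_ax_b$ with, say, $x_a$ dividing none of $f_1,\dots,f_4$, I would set $c_{i_0}:=c$, which routes $f_{i_0}$'s interval through the ``private'' variable $x_a$, out of the way of the other three; those three, together with the reduced leftover poset, are then handled as in the $r\le 3$ case.

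I expect the main obstacle to be exactly this last bundle of combinatorial verifications: showing simultaneously that the chosen intervals $[f_i,c_i]$ are pairwise disjoint and contained in $P_{I\setminus J}$ \emph{and} that the leftover poset still admits a partition of $\sdepth\ge d+2$ — both being governed by the matching/Hall-type argument driven by $\depth_S I/J\ge d+2$ — while maintaining this control through the inductive peeling of variables, where $d$, $r$ and $E$ all shift, and in particular through the passage to $(I:x_n)$ where the lowest generating degree can drop. Once that is carried out one obtains $\sdepth_S I/J\ge d+2$, which contradicts $\sdepth_S I/J=d+1$ under the assumption $\depth_S I/J\ge d+2$, and the theorem follows.
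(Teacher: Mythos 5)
Your proposal is a plan rather than a proof, and the plan points in a direction that cannot be carried out with the tools you invoke. You propose to show the stronger implication ``$\depth_S I/J\ge d+2$ implies $\sdepth_S I/J\ge d+2$'', i.e.\ to construct a Stanley decomposition of sdepth $\ge d+2$ from a depth lower bound. That is the hard direction of Stanley's problem, and nothing in your sketch actually produces the partition: the disjointness of the intervals $[f_i,c_i]$ and the injective matching of the leftover degree-$(d+1)$ monomials into $C$ are precisely the content of the theorem, and you defer both to ``the main obstacle''. The one mechanism you offer for the matching --- that a Hall deficiency among the degree-$(d+1)$ monomials ``would, via the exact sequences above, force $\depth_S I/J\le d+1$'' --- is asserted, not proved, and is not a known result in the local form you need; the available tool (Shen's Hilbert-depth bound \cite{Sh}, used in this paper as the global count $s\le q+r$) constrains only the total numbers of monomials, not the neighbourhood of every subset of $B$, and depth does not localize to sub-posets of $P_{I\setminus J}$ in the way your argument requires. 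The induction on $n$ is likewise only gestured at: you do not verify that peeling off a variable terminates in the listed base configurations, how $(d,r,E)$ actually change in each branch, or how the witness $c$ of case (2) survives the reduction.

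The actual proof (in \cite{AP}, which this paper only quotes, and whose method is reproduced here in Proposition \ref{m'} and Section 2) runs in the opposite direction. One assumes $\sdepth_S I/J=d+1$, reduces to $2r\le s\le q+r$ and to $C$ being covered by the $(f_i)\cap(f_j)$ via the Hilbert-depth results and \cite[Lemma 1.1]{PZ2}, removes a ``private'' degree-$(d+1)$ multiple $b$ of one generator to form $I_b$, shows one may assume $\sdepth_S I_b/J_b\ge d+2$, and then analyzes a partition $P_b$ of $I_b/J_b$ by the path machinery to produce either a partition of $I/J$ of sdepth $d+2$ (a contradiction) or a proper subideal $I'$ with $\sdepth_S I'/J'\le d+1$ and $\depth_S I/(J,I')\ge d+1$, after which induction and the Depth Lemma give $\depth_S I/J\le d+1$. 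None of this structure --- the auxiliary partition $P_b$, the paths, the exact sequence with the subideal $I'$ --- appears in your proposal, so the combinatorial core you correctly identify remains unproved.
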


 The  purpose of this paper is to extend the above theorem in the following form.

\begin{Theorem}\label{m}   Let   $B$ be the set of the squarefree monomials of degree  $d+1$ of $I\setminus J$.  Conjecture \ref{c} holds  in each of the following two  cases:
\begin{enumerate}
\item{} $r\leq 4$,
\item{} $r=5$,  and there exists  $t\not\in \cup_{i\in [5]} \supp f_i$, $t\in [n]$ such that $(B\setminus E)\cap (x_t)\not =\emptyset$ and $E\subset (x_t)$.
\end{enumerate}
\end{Theorem}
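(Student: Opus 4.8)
The plan is to follow the inductive scheme of \cite{AP} and argue by induction on $n$, reducing $I/J$ along a well chosen variable.

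\smallskip
\noindent\emph{Step 1 (reductions).} Using the standard reductions from \cite{PZ1}, \cite{P} and \cite{AP} one may assume that $J$ is generated in degree $d+1$ (or $J=0$) and that every monomial of $E$ has degree exactly $d+1$. By \cite[Proposition 3.1]{HVZ} we have $\depth_S I/J\ge d$; moreover $\sdepth_S I/J\ge d$ always, and if $\sdepth_S I/J\le d$ then $\sdepth_S I/J=d$ and \cite[Theorem 4.3]{P} gives $\depth_S I/J=d$. Hence it suffices to prove the implication
\[
\depth_S I/J\ge d+2\ \Longrightarrow\ \sdepth_S I/J\ge d+2 .
\]
So assume $\depth_S I/J\ge d+2$; we must exhibit a partition of $P_{I\setminus J}$ into intervals whose upper ends all have degree $\ge d+2$. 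Since each $f_j$ lies in $P_{I\setminus J}$ in degree $d$, such a partition must assign to $f_j$ an interval $[f_j,v_j]$ with $\deg v_j\ge d+2$ and $[f_j,v_j]\subseteq P_{I\setminus J}$, and the squarefree monomials of degree $d+1$ of $I\setminus J$ must be packed likewise; the role of the hypothesis $\depth_S I/J\ge d+2$ is precisely to guarantee there is room to do this.

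\smallskip
\noindent\emph{Step 2 (splitting along a variable).} Choose a variable $x_t$ with $x_t\nmid f_i$ for all $i$: in case (2) take the given $t$, and in case (1), where $r\le 4$, choose $t$ (possibly exploiting a monomial of $E$) so that after restriction the pieces below again satisfy the hypotheses of the theorem over the smaller ring. Put $S'=K[x_i:i\in[n]\setminus\{t\}]$, $I'=I\cap S'$, $\widehat I=(I:x_t)\cap S'$, and analogously $J'$, $\widehat J$; then $J'\subseteq I'\subseteq\widehat I$ and $J'\subseteq\widehat J\subseteq\widehat I$, and one has the disjoint poset decomposition $P_{I\setminus J}=P_{I'\setminus J'}\sqcup x_t\,P_{\widehat I\setminus\widehat J}$ together with the standard exact sequence of $S$-modules (see \cite{P}, \cite{AP}) relating $I/J$ to $I'/J'$ and $\widehat I/\widehat J$. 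In case (2) the hypotheses $t\notin\bigcup_{i\in[5]}\supp f_i$ and $E\subset(x_t)$ give $I'=(f_1,\dots,f_5)S'$ and $\widehat I=\big(f_1,\dots,f_5,\{e/x_t:e\in E\}\big)S'$, the monomials $e/x_t$ having degree $d$, while $(B\setminus E)\cap(x_t)\ne\emptyset$ guarantees that at least one $f_j$ survives in $\widehat I/\widehat J$.

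\smallskip
\noindent\emph{Step 3 (transfer and conclusion).} From $\depth_S I/J\ge d+2$, the exact sequence of Step~2 and the depth lemma, together with the base bounds $\depth_{S'}I'/J'\ge d$ and $\depth_{S'}\widehat I/\widehat J\ge d$ (from \cite[Proposition 3.1]{HVZ}, after checking the degrees of the generators of $I'$, $\widehat I$, $J'$, $\widehat J$), one deduces $\depth_{S'}I'/J'\ge d+1$ and $\depth_{S'}\widehat I/\widehat J\ge d+1$, the bound $d+2$ being localized on one of the two summands. Checking that $I'/J'$ and $\widehat I/\widehat J$ again fall under case (1) or (2) over $S'$, the inductive hypothesis yields $\sdepth_{S'}I'/J'\ge d+2$ and $\sdepth_{S'}\widehat I/\widehat J\ge d+1$, and splicing interval partitions along $P_{I\setminus J}=P_{I'\setminus J'}\sqcup x_t P_{\widehat I\setminus\widehat J}$ gives
\[
\sdepth_S I/J\ \ge\ \min\{\,\sdepth_{S'}I'/J',\ 1+\sdepth_{S'}\widehat I/\widehat J\,\}\ \ge\ d+2,
\]
contradicting $\sdepth_S I/J=d+1$.

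\smallskip
\noindent\emph{The main obstacle} is Step~3. On the one hand $I:x_t$ and $J:x_t$ may acquire generators of degree $d$ (or, if $x_t$ were allowed to divide some $f_i$, of degree $d-1$), so one must argue carefully that after restriction the two pieces genuinely have the shape required by cases (1)--(2). On the other hand, and most seriously, in case (2) the piece $I'/J'$ carries the \emph{five} generators $f_1,\dots,f_5$ of degree $d$ with empty higher part, which is not directly covered by case (1); disposing of it requires a secondary reduction — peeling off a further variable so as to cut $I'/J'$ down to an $r\le 4$ situation while preserving the ``generated in degree $\ge d$'' hypothesis — and this, with the attendant case analysis, is where the bulk of the work will lie.
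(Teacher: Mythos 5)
Your Step 1 reformulation (prove $\depth_S I/J\ge d+2\Rightarrow \sdepth_S I/J\ge d+2$) is logically equivalent to Conjecture \ref{c} given \cite[Theorem 4.3]{P}, but it reverses the direction in which the argument can actually be run, and Step 3 then breaks down. From the exact sequence $0\to I\cap(x_t)/J\cap(x_t)\to I/J\to I/(J,I\cap(x_t))\to 0$ the Depth Lemma only gives $\depth(\mathrm{middle})\ge\min(\ldots)$ and the weaker bounds $\depth A\ge\min(\depth B,\depth C+1)$, $\depth C\ge\min(\depth A-1,\depth B)$; it does \emph{not} let you push $\depth_S I/J\ge d+2$ down to $\depth\ge d+2$ on either piece. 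The local cohomology long exact sequence is perfectly consistent with $\depth_S I/J\ge d+2$ while $\depth_{S'}\widehat I/\widehat J=d$ and $\depth_{S'}I'/J'=d$ (the two low cohomologies cancel against each other), so the claim that ``the bound $d+2$ is localized on one of the two summands'' is unjustified. Even if you could place $d+2$ on one piece, the induction hypothesis applied to the other piece only yields $\sdepth\ge d+1$ there, and your splicing inequality then gives $\sdepth_S I/J\ge d+1$, not $\ge d+2$. This is why the paper argues in the opposite direction throughout: it assumes $\sdepth_S I/J=d+1$, extracts a subideal $I'$ with $\sdepth_S I'/J'\le d+1$ (forced by \cite[Lemma 2.2]{R} from a partition of the quotient), applies the inductive Conjecture to get $\depth_S I'/J'\le d+1$, checks $\depth_S I/(J,I')\ge d+1$ directly (Lemma \ref{dep}), and only then invokes the Depth Lemma — where the inequality does go the right way to bound $\depth_S I/J$ from \emph{above}.

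The second, independent gap is that case (1), $r\le 4$ with $E\ne\emptyset$, is the core of the theorem and your scheme does not reach it: there is in general no variable $t$ outside $\cup_i\supp f_i$ with the properties you need (take $E=\emptyset$ and the supports of the $f_i$ covering $[n]$, or $E$ not contained in any $(x_t)$), which is exactly why the $r=5$ statement carries the extra hypothesis on $t$. The paper handles $r\le 4$ not by splitting along a variable but by a combinatorial analysis of a near-optimal partition $P_b$ of $I_b/J_b$ via ``paths'' (Proposition \ref{ml} and Lemmas \ref{i0}--\ref{v}), plus the separate treatment of the degenerate configurations $f_i\in(v)$ and $\omega_i\in(C_3\setminus W)\cap(E)$ in Theorem \ref{m1}; only the $r=5$ case is obtained by the restriction-to-$(x_t)$ device (Lemma \ref{el}), and even there the key input is the \emph{sdepth} of $I_t/J_t$, not its depth. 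As it stands your proposal is a plan whose central transfer step fails and whose hardest case is deferred, so it cannot be accepted as a proof.
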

The above theorem follows from  Theorems \ref{m1}, \ref{m2} (the case $r=4$, $E=\emptyset$ is given already in Proposition \ref{m'}). It is worth to mention that the idea of the proof of Proposition \ref{m'}, and  Theorem \ref{ap} started already in the proof of \cite[Lemma 4.1]{PZ2} when $r=1$. Here  {\em path} is a more general notion,  the reason being to suit better the exposition. However, the case $r=4$, $E\not=\emptyset$ is more complicated (see Remark \ref{M''}) and we have to study separately  the special case when $f_i\in (v)$, $i\in [4]$ for some monomial $v$ of degree $d-1$ (see the proof of Theorem \ref{m1}).

What can be done next? We believe that Conjecture \ref{c} holds, but the proofs will become harder with  increasing $r$. Perhaps for each $r\geq 5$ the proof could be done in  more or less a common form but leaving some ''pathological'' cases which should be done separately. Thus to get a proof of Conjecture \ref{c} seems to be a difficult aim.

We owe thanks to a Referee, who noticed  some  mistakes in a previous version of this paper, especially in the proof of Lemma \ref{dep}.

\section{Depth and Stanley depth}

Suppose that $I$ is minimally generated by some squarefree monomials $f_1,\ldots,f_r$ of degrees $ d$  for some $d\in {\mathbb N}$ and a set of squarefree monomials $E$ of degree $\geq d+1$.
 Let  $B$ (resp. $C$) be the set of the squarefree monomials of degrees $d+1$  (resp. $d+2$) of $I\setminus J$. Set $s=|B|$, $q=|C|$.  Let $w_{ij}$ be the least common multiple of $f_i$ and $f_j$ and set $W$ to be the set of all $w_{ij}$.  Let $C_3$ be the set of all $c\in C\cap  (f_1,\ldots,f_r)$ having all divisors from $B\setminus E$ in $W$. In particular each monomial of $C_3 $ is the least common multiple of three of the $f_i$. The converse is not true as shown by  \cite[Example 1.6]{AP}. Let $C_2$ be the set of all $c\in C$, which are the least common multiple of two $f_i$, that is $C_2=C\cap W$. Then $C_{23}=C_2\cup C_3$ is the set of all $c\in C$, which are the least common multiple of two or three $f_i$. We may have $C_2\cap C_3\not = \emptyset$ as shows the following example.

 \begin{Example} \label{exe}{\em Let $n\geq 4$, $f_i=x_ix_{i+1}$, $i\in [3]$, $f_4=x_1x_4$ and $I=(f_1,\ldots,f_4)$, $J=0$. Note that $m=x_1x_2x_3x_4$ is a least common multiple of every three monomials $f_j$ and the divisors of $m$ with degree $3$ are $w_{12}, w_{23},w_{34}, w_{14}$. Thus $m\in C_3$. But $m\in C_2$ because $m=w_{13}=w_{24}$.}
 \end{Example}

We start with a  lemma, which slightly extends \cite[Theorem 2.1]{AP}.
\begin{Lemma} \label{el} Suppose that
there exists  $t\in [n]$, $t\not\in  \cup_{i\in [r]} \supp f_i$ such that $(B\setminus E)\cap (x_t)\not =\emptyset$ and $E\subset (x_t)$.
If Conjecture \ref{c} holds for  $r'<r$  and
 $\sdepth_SI/J=d+1$,
then $\depth_SI/J\leq d+1$.
\end{Lemma}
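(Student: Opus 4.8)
The hypothesis gives a variable $x_t$ that divides every generator in $E$ and at least one element of $B\setminus E$, while $t$ appears in none of the degree-$d$ generators $f_i$. The natural move is to separate the part of $I/J$ ''visible without $x_t$'' from the part lying inside $(x_t)$. Concretely, write $I=I'+x_tL$ where $I'=(f_1,\dots,f_r)$ is generated by the degree-$d$ monomials alone (none involving $x_t$) and the remaining generators, all in $E\subset(x_t)$, contribute $x_tL$ for a suitable squarefree monomial ideal $L$ in the variables $\neq x_t$. Similarly decompose $J$. I would then set up a short exact sequence of multigraded modules relating $I/J$, the ''$x_t$-free'' quotient $\bar I/\bar J$ (an ideal generated purely in degree $d$, i.e. the $E=\emptyset$ case, for which Conjecture \ref{c} is known by Theorem \ref{ap} or the $r'<r$ inductive hypothesis), and a quotient supported in $(x_t)$ which, after dividing by $x_t$, is generated in degree $\geq d$ but on fewer ''degree-$d$ generators'' or on a strictly smaller variable set.

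**Main steps.** First I would make the decomposition precise and identify the three terms of the exact sequence $0\to A\to I/J\to Q\to 0$ (or the corresponding sequence with $x_t$-multiplication), checking that each term is again of the ''squarefree, generated in degrees $\geq d$ resp. $\geq d+1$'' type so that the depth lemma applies. Second, I would use the standard depth inequality $\depth I/J\geq\min\{\depth A,\depth Q\}$ together with its converse-type companion: if $\depth I/J\geq d+2$ then both flanking modules have depth $\geq d+2$ as well (here one uses that $B\setminus E$ being nonempty inside $(x_t)$ prevents a degenerate vanishing, and that $\depth\geq d$ always holds by \cite[Proposition 3.1]{HVZ}). Third — the crux — I would show that $\sdepth_S I/J=d+1$ forces one of the flanking modules to have Stanley depth $=d+1$ as well: Stanley depth behaves well under such splittings since a Stanley decomposition of $I/J$ restricts to Stanley decompositions of the $x_t$-free part and of the $(x_t)$-part. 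Once a flanking module has $\sdepth=d+1$, the inductive hypothesis (Conjecture \ref{c} for $r'<r$, or the $E=\emptyset$ case) gives it depth $\leq d+1$, and then the depth inequality pushes $\depth_S I/J\leq d+1$.

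**The main obstacle.** The delicate point is the compatibility of Stanley depth with the $x_t$-splitting: unlike ordinary depth, $\sdepth$ does not automatically pass to the subquotients in a short exact sequence, so I must argue directly with partitions of the poset $P_{I\setminus J}$. The poset splits as $P_{\bar I\setminus\bar J}$ (squarefree monomials not divisible by $x_t$) together with the monomials divisible by $x_t$, and the latter set is order-isomorphic, via division by $x_t$, to $P_{L\setminus L'}$ shifted up in degree by one. A partition achieving $\sdepth P\le d+1$ must have some interval $[u_i,v_i]$ with $\deg v_i\le d+1$; I would argue that this witnessing interval lives entirely in one of the two pieces (an interval cannot straddle the $x_t$-divisibility boundary, since $u_i\mid w\mid v_i$ forces $x_t\mid u_i\Leftrightarrow x_t\mid v_i$ only in the ''inside'' direction, so the genuinely dangerous case is when the small $v_i$ is $x_t$-free), and hence that piece already has Stanley depth $\le d+1$. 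Handling the interaction when the witnessing top element $v_i$ has degree exactly $d+1$ and sits in $B\setminus E$ — precisely the element guaranteed by the hypothesis $(B\setminus E)\cap(x_t)\neq\emptyset$ — is where the bookkeeping is heaviest, but the hypotheses were chosen exactly to make this case go through cleanly.
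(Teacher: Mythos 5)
Your central step --- ``a Stanley decomposition of $I/J$ restricts to Stanley decompositions of the $x_t$-free part and of the $(x_t)$-part'' --- is the gap, and it does not hold. An interval $[u,v]$ with $x_t\nmid u$ but $x_t\mid v$ straddles the boundary, and its $x_t$-free part is $[u,v/x_t]$, whose top degree drops by one; so restriction can destroy the sdepth bound. More importantly, even if a restricted partition had a short interval in one piece, that would not show the piece has $\sdepth\leq d+1$: you would need this for \emph{every} partition of that piece. The only available tool is Rauf's inequality $\sdepth_S(I/J)\geq\min\{\sdepth_S(I'/J'),\sdepth_S(I/(J,I'))\}$, which lets you conclude $\sdepth_S I'/J'\leq d+1$ only after you have \emph{exhibited} a partition of sdepth $\geq d+2$ on the quotient. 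Your second step also overreaches: the Depth Lemma does not give that $\depth I/J\geq d+2$ forces both flanking modules to have depth $\geq d+2$; the usable implication is that $\depth A\geq\min\{\depth B,\depth C+1\}$, which is why the paper must separately verify that its quotient term has depth $>d$.

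The paper's route is genuinely different and you should compare it with yours. It does not split off $(f_1,\dots,f_r)$ from $x_tL$; it works with $I_t=I\cap(x_t)$, $J_t=J\cap(x_t)$, which (after an induction on $|E|$ and a reduction of $C$ via \cite[Lemma 1.1]{PZ2}) is generated purely in degree $d+1$. If $\sdepth_S I_t/J_t\leq d+1$, the already-proven case $\sdepth=d$ of Stanley's conjecture (\cite[Theorem 4.3]{P}, applied with $d+1$ in place of $d$) gives $\depth_S I_t/J_t\leq d+1$, and \cite[Lemma 1.1]{AP} transfers this to $I/J$. Otherwise a partition of $I_t/J_t$ of sdepth $d+2$ contains intervals $[x_tf_i,c_i]$ and $[a,c_a]$, which are \emph{transplanted} to disjoint intervals $[f_i,c_i]$, $[a,c_a]$ forming a sdepth-$(d+2)$ partition of $I/(J,I')$ for a suitable $I'$ generated by the remaining $f_j$ and part of $B$; only then does Rauf's lemma force $\sdepth_S I'/J'\leq d+1$, and the inductive Conjecture (for $r-e<r$ generators of degree $d$) plus the Depth Lemma finish. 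The dichotomy on $\sdepth_S I_t/J_t$ and the explicit interval transplantation are the ideas your proposal is missing, and without them the argument does not close.
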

\begin{proof}  We follow the proof of \cite[Theorem 2.1]{AP}. Apply induction on $|E|$, the case $|E|=0$ being done in the quoted theorem. We may suppose
 that  $E$ contains only monomials of degrees $d+1$ by \cite[Lemma 1.6]{PZ}. Since Conjecture \ref{c} holds for $r'<r$ we see that $C\not\subset (f_2,\ldots,f_r,E)$ implies $\depth_S I/J\leq d+1$ by  \cite[Lemma 1.1]{PZ2}. If Conjecture \ref{c} holds for $r$ and $E\setminus\{a\}$ with some $a\in E$ then  $C\not\subset (f_1,\ldots,f_r,E\setminus \{a\})$ implies again $\depth_S I/J\leq d+1$ by  the quoted lemma. Thus using the induction hypothesis on $|E|$
 we may assume that $C\subset (W)\cup ((E)\cap (f_1,\ldots,f_r))\cup (\cup_{a,a'\in E, a\not =a'} (a)\cap (a'))$.
  Let $I_t=I\cap (x_t)$, $J_t=J\cap (x_t)$, $B_t=(B\setminus E)\cap (x_t)=\{x_tf_1,\ldots,x_tf_e\}$,
 for some $1\leq e\leq r$. If $\sdepth_SI_t/J_t\leq d+1$ then $\depth_SI_t/J_t\leq d+1$ by \cite[Theorem 4.3]{P}  because $I_t$ is generated only by monomials of degree $d+1$. Thus $\depth_SI/J\leq \depth_SI_t/J_t\leq d+1$ by \cite[Lemma 1.1]{AP}.

 Suppose that $\sdepth_SI_t/J_t\geq d+2$. Then there exists a partition on $I_t/J_t$ with sdepth $d+2$ having some disjoint intervals $[x_tf_i,c_i]$, $i\in [e]$ and $[a,c_a]$, $a\in E$. We may assume that $c_i,c_a$ have degrees $d+2$.
  We have either $c_i\in (W)$, or $c_i\in ((E)\cap (f_1,\ldots,f_r))\setminus(W)$. In the first case   $c_i=x_tw_{ik_i}$ for some $1\leq k_i\leq r$, $k_i\not = i$. Note that $x_tf_{k_i}\in B$ and so  $k_i\leq e$. We consider
the intervals $[f_i,c_i]$. These intervals contain $x_tf_i$ and possible a  $w_{ik_i}$.  If $w_{ik_i} =w_{jk_j}$  for $i\not =j$ then we get $c_i=c_j$ which is false. Thus these intervals are disjoint.

   Let $I'$ be the ideal generated by  $f_j$ for $ e<j\leq r$ and
  $B\setminus (E\cup (\cup_{i=1}^ e[f_i, c_i]))$. Set $J'=I'\cap J$. Note that $I'\not =I$ because $e\geq 1$ . As we showed already $c_i\not \in I'$ for any $i\in [ e]$. Also $c_a\not\in I'$  because otherwise $c_a=x_tx_kf_j$ for some $e<j\leq r$ and we get $x_tf_j\in B$, which is false.
 In the following exact sequence
$$0\to I'/J'\to I/J\to I/(J+I')\to 0$$
the last term has a partition of sdepth $d+2$ given by the intervals  $[f_i,c_i]$ for $1\leq i\leq e$ and $[a,c_a]$ for $a\in E$. It follows that $I'\not =J'$ because $\sdepth_SI/J=d+1$.
 Then $\sdepth_SI'/J'\leq d+1$  using \cite[Lemma 2.2]{R} and so
$\depth_SI'/J'\leq d+1$ by Conjecture \ref{c} applied for $r-e<r$. But the last term of the above sequence has depth $>d$ because
$x_t$ does not annihilate $f_i$ for $i\in [e]$. With the Depth Lemma we get $\depth_SI/J\leq d+1$.
\hfill\ \end{proof}

Next we give a variant of the above lemma.

\begin{Lemma} \label{el2} Suppose that $r>2$, $E=\emptyset$, $C\subset (W)$ and
there exists  $t\in [n]$, $t\not\in  \cup_{i\in [r]} \supp f_i$  such that $x_tw_{ij}\in C$ for some $1\leq i<j\leq r$.
If Conjecture \ref{c} holds for  $r'\leq r-2$  and
 $\sdepth_SI/J=d+1$,
then $\depth_SI/J\leq d+1$.
\end{Lemma}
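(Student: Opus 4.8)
The plan is to follow the proof of Lemma~\ref{el} almost verbatim, now with the two hypotheses $E=\emptyset$ and $C\subset(W)$ available from the outset (in Lemma~\ref{el} they are only reached after an induction on $|E|$ and an application of \cite[Lemma 1.1]{PZ2}, which there costs an instance of Conjecture~\ref{c} for $r-1$). First I would relabel so that $B_t:=(B\setminus E)\cap(x_t)=\{x_tf_1,\dots,x_tf_e\}$ and observe that the hypothesis forces $e\geq2$: if $x_tw_{ij}\in C$ with $i<j$, then $\deg w_{ij}=d+1$, and since $f_i\mid w_{ij}$ and $f_j\mid w_{ij}$ the monomials $x_tf_i$ and $x_tf_j$ divide $x_tw_{ij}\in I\setminus J$, hence lie in $I\setminus J$, are squarefree (because $t\notin\cup_k\supp f_k$), and have degree $d+1$; so $x_tf_i,x_tf_j\in B\setminus E$ and $\{i,j\}\subset[e]$. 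Also $E=\emptyset\subset(x_t)$, so we are in the situation of Lemma~\ref{el}. Note further that the first reduction step of Lemma~\ref{el} (invoking \cite[Lemma 1.1]{PZ2} with the subideal $(f_2,\dots,f_r)$) is vacuous here, because $C\subset(W)$ already forces $C\subset(f_2,\dots,f_r)$: each $c\in C$ is divisible by two distinct $f_k$, hence by one with index $\geq2$.

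I would then run the dichotomy on $\sdepth_S I_t/J_t$, where $I_t=I\cap(x_t)$ and $J_t=J\cap(x_t)$; as in Lemma~\ref{el}, $I_t$ is generated by $x_tf_1,\dots,x_tf_r$, all of degree $d+1$, so $\sdepth_S I_t/J_t\geq d+1$ automatically. If $\sdepth_S I_t/J_t=d+1$ then $\depth_S I_t/J_t=d+1$ by \cite[Theorem 4.3]{P}, and $\depth_S I/J\leq\depth_S I_t/J_t\leq d+1$ by \cite[Lemma 1.1]{AP}; no instance of Conjecture~\ref{c} is needed. So assume $\sdepth_S I_t/J_t\geq d+2$, fix a partition of $P_{I_t\setminus J_t}$ of sdepth $\geq d+2$, and for $i\in[e]$ let $[x_tf_i,c_i]$ be the interval of the partition containing $x_tf_i$; we may assume $\deg c_i=d+2$. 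Since $c_i\in C\cap(x_t)$, $C\subset(W)$, and $x_t$ divides no $w_{k\ell}$, necessarily $c_i=x_tw_{k_i\ell_i}$ with $\deg w_{k_i\ell_i}=d+1$ and $f_i\mid w_{k_i\ell_i}$, and the $w_{k_i\ell_i}$ are pairwise distinct (otherwise $c_i=c_j$ for some $i\neq j$). Lifting each $[x_tf_i,c_i]$ to $[f_i,c_i]=\{f_i,\,x_tf_i,\,w_{k_i\ell_i},\,c_i\}\subset P_{I\setminus J}$, these $e$ intervals are pairwise disjoint.

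Next I would perform the same surgery as in Lemma~\ref{el}: let $I'$ be the ideal generated by the $f_j$ with $e<j\leq r$ together with $B\setminus\bigcup_{i=1}^e[f_i,c_i]$, and put $J'=I'\cap J$. Arguing exactly as there, one checks that $I'\subsetneq I$ (since $e\geq1$), that $c_i\notin I'$ for $i\in[e]$, and that $I/(J+I')$ is partitioned by the intervals $[f_i,c_i]$, $i\in[e]$, together with further intervals all of whose bottoms have degree $\geq d+2$; hence $\sdepth_S I/(J+I')\geq d+2$. Consequently $I'\neq J'$ (because $\sdepth_S I/J=d+1$), and $\sdepth_S I'/J'\leq d+1$ by \cite[Lemma 2.2]{R}. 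Now $I'$ has at most $r-e\leq r-2$ generators of degree $d$, so the assumption that Conjecture~\ref{c} holds for $r'\leq r-2$ --- supplemented by \cite[Theorem 4.3]{P} in the borderline case $\sdepth_S I'/J'=d$ --- gives $\depth_S I'/J'\leq d+1$. Since $\depth_S I/(J+I')>d$, because $x_t$ does not annihilate $f_i$ modulo $J+I'$ for any $i\in[e]$, the Depth Lemma applied to $0\to I'/J'\to I/J\to I/(J+I')\to 0$ yields $\depth_S I/J\leq d+1$.

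The only genuinely new input compared with Lemma~\ref{el} is the inequality $e\geq2$: it is forced by the single extra hypothesis $x_tw_{ij}\in C$, and it is precisely what lets the surgery invoke Conjecture~\ref{c} only for $r-e\leq r-2$ rather than for $r-e\leq r-1$ (the best available in Lemma~\ref{el}, where one only knows $e\geq1$). I expect the main obstacle to be, as in Lemma~\ref{el}, the verification that $I/(J+I')$ is indeed cut out by the lifted intervals --- i.e.\ keeping careful track of the squarefree monomials of degree $\geq d+2$ and of the non-squarefree monomials so that they all fall into $I'$, and of the fact that $\depth_S I/(J+I')>d$ --- but since this step is identical to the corresponding one in Lemma~\ref{el}, I would simply refer to it rather than redo it.
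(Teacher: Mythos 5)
Your proposal is correct and follows exactly the paper's route: the paper's proof of this lemma likewise just reruns the argument of Lemma~\ref{el}, skips the initial reduction because $C\subset(W)$ is already hypothesized, observes that $x_tf_i,x_tf_j\in B$ forces $e\geq 2$, and concludes that $I'$ has at most $r-2$ degree-$d$ generators so that Conjecture~\ref{c} for $r'\leq r-2$ applies. Your write-up merely fills in the details (the dichotomy on $\sdepth_S I_t/J_t$, the lifted intervals, the surgery producing $I'$) that the paper delegates wholesale to the proof of Lemma~\ref{el}.
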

\begin{proof} We follow the proof of the above lemma, skipping the first part since we have already $C\subset (W)$. Note that in our case $x_tf_i,x_tf_j\in B$ and so $e\geq 2$. Thus $I'$ is generated by at most $(r-2)$ monomials of degrees $d$ and some others of degrees $\geq d+1$. Therefore, Conjecture \ref{c} holds for $I'/J'$ and so the above proof works in our case.
\hfill\ \end{proof}

For $r\leq 3$ the following lemma is part from the proof of \cite[Lemma 3.2]{AP} but not in an explicit way. Here we try to formalize better the arguments in order to apply them when $r=4$.
\begin{Lemma} \label{dep} Suppose that  $r\leq 4$  and for each $i\in [r] $ there exists $c_i\in C\cap (f_i)$ such that the intervals $[f_i,c_i]$, $i\in [r]$ are disjoint. Then $\depth_SI/J\geq d+1$.
\end{Lemma}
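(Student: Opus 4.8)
The goal is to produce an explicit Stanley decomposition of $I/J$ whose every interval ends in a monomial of degree $\geq d+1$, which forces $\sdepth_S I/J \geq d+1$; then by Stanley's Conjecture in the already-known cases (or more precisely, since we only need a lower bound on the \emph{actual} depth here, we must instead argue directly). Let me reconsider: the statement asserts $\depth_S I/J \geq d+1$, so the real content is an induction on the number of generators combined with the Depth Lemma, not merely a Stanley decomposition. The plan is to induct on $r$. For the base cases $r\leq 3$ the assertion is extracted from the proof of \cite[Lemma 3.2]{AP}; the point of this lemma is to push the argument to $r=4$. So assume $r=4$ and that $c_1,\dots,c_4\in C$ with $c_i\in (f_i)$ and the intervals $[f_i,c_i]$ pairwise disjoint.

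First I would set up the standard short exact sequence splitting off one generator: let $I' = (f_1,f_2,f_3, \text{together with all squarefree monomials of }I\setminus J\text{ of degree }\geq d+1\text{ not divisible in the interval }[f_4,c_4])$, and more carefully choose $I'$ so that $I/I'$ is (up to multigraded shift) generated in degree $\geq d+1$ in a way that still retains disjoint intervals $[f_i,c_i]$ for $i\in[3]$. Then in
$$0\to I'/J' \to I/J \to I/(J+I') \to 0,$$
with $J' = I'\cap J$, the quotient $I/(J+I')$ has depth $\geq d+1$ because $c_4\in (f_4)\cap C$ guarantees $f_4$ is not annihilated down to degree $d$ — the interval $[f_4,c_4]$ witnesses a free direction. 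By the inductive hypothesis applied to $I'/J'$ (which is generated by at most three monomials of degree $d$ plus higher-degree squarefree monomials, and still carries the disjoint intervals $[f_i,c_i]$, $i\in[3]$), we get $\depth_S I'/J' \geq d+1$, and the Depth Lemma then yields $\depth_S I/J \geq \min\{d+1, d+1\} = d+1$.

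The main obstacle, and the reason $r=4$ is genuinely harder, is the bookkeeping in the definition of $I'$: one must remove from the generating set exactly the squarefree monomials "used up" by the interval $[f_4,c_4]$ so that (a) $I'$ is strictly smaller, generated by $\leq 3$ degree-$d$ monomials and higher-degree stuff, (b) the intervals $[f_i,c_i]$ for $i\in[3]$ remain disjoint and remain inside $I'\setminus J'$ (using disjointness from $[f_4,c_4]$), and (c) the quotient $I/(J+I')$ genuinely has depth $>d$, i.e. the monomials peeled off organize into intervals of the form $[f_4, c_4]$ and $[b, \cdot]$ that give a positive-depth module. Verifying (c) requires checking that no degree-$d+1$ or degree-$d+2$ monomial of the quotient is stranded as a one-element interval, which is where the hypothesis $c_i\in C$ (degree exactly $d+2$, so each interval has length $2$) is used. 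I would handle this by a direct partition argument on the quotient rather than an abstract depth computation, and this combinatorial verification is the part that needs care; everything else is a routine application of the Depth Lemma and the induction.
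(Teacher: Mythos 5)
Your overall shape (induct on $r$, peel off part of the module, apply the Depth Lemma) matches the paper's, but the key claim in your plan is false in exactly the hard case, and that case is the entire content of the lemma. You assert that $I/(J+I')$ has depth $\geq d+1$ ``because $c_4\in(f_4)\cap C$ witnesses a free direction.'' This only works if the degree-$(d+1)$ divisors $b,b'$ of $c_4$ lying in $[f_4,c_4]$ are \emph{not} divisible by any of $f_1,f_2,f_3$: otherwise $b,b'\in I'$, so in the quotient $I/(J+I')$ the class of $f_4$ is annihilated by every variable, the quotient acquires a socle element in degree $d$, its depth drops to $0$, and the Depth Lemma gives nothing. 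The paper's proof is organized precisely around this obstruction: its Case 1 treats $C\not\subset(W)$, where one can choose some $c_i\in(f_i)\setminus(f_j:j\neq i)$ and then the interval $[f_i,c_i]$ really does survive into the quotient (there one peels off the interval $[f_1,c_1]$, not the generator $f_4$, and the surviving monomial $b\notin(f_2,\ldots,f_r)$ is what certifies depth $\geq d+1$ of the cokernel). The complementary case $C\subset(W)$, where every $c_i$ is an lcm of two or more of the $f_j$ and no interval has a ``private'' degree-$(d+1)$ monomial, occupies Cases 3--6 of the paper and requires a genuinely different argument: a delicate analysis of which $w_{ij}$ lie in which intervals, whether $w_{ij}\in B$ or $w_{ij}\in C$, sometimes splitting off $(w_{12})$ or a single principal ideal $(f_3)$ instead of an interval. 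Your proposal defers this to ``bookkeeping,'' but it is not bookkeeping around a working strategy --- the strategy itself fails there, and Example \ref{e3} of the paper (where the lemma is false for $r=5$) shows how tight this case analysis is.

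A secondary point: even in the favourable case, your choice of $I'$ must drop from the generating set the two monomials $b,b'$ of $B\cap[f_4,c_4]$ (not merely ``monomials not divisible in the interval''), and one must check that the remaining intervals $[f_i,c_i]$, $i\in[3]$, still sit inside $I'\setminus J'$ with $c_i$ of degree $d+2$ so that the inductive hypothesis applies; this works because disjointness forces $b,b'\nmid c_i$, but it needs to be said. As written, the proposal covers roughly the paper's Case 1 with the wrong peeling choice, and omits the case that the lemma was written to handle.
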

\begin{proof} The proof consists of an induction part dealing with the case $C\not \subset (W)$ followed by a case analysis covering the case $C \subset (W)$.

{\bf Case 1}, $C\not \subset (W)$

Suppose that there exists $c\in C\setminus (W)$, let us say $c\in (f_1)\setminus (f_2,\ldots,f_r)$. Then $[f_1,c]$ is disjoint with respect to $[f_i,c_i]$, $1<i\leq r$ and we may change $c_1$ by $c$, that is we may suppose that $c_1\in (f_1)\setminus (f_2,\ldots,f_r)$. Let $B\cap [f_1,c_1]=\{b,b'\}$ and $L=(f_2,\ldots,f_r,B\setminus
\{b,b',E\})$. In the following exact sequence
$$0\to L/(J\cap L)\to I/J\to I/(J,L)\to 0$$
the first term has depth $\geq d+1$ by induction hypothesis and the last term is isomorphic with $(f_1)/((J,L)\cap (f_1))$ and has depth $\geq d+1$ because $b\not \in (J,L)$. Thus $\depth_SI/J\geq d+1$ by the Depth Lemma.

{\bf Case 2}, $r=2$

In this case, note that one from $c_1,c_2$ is not in $(W)=(w_{12})$, that is we are in the above case. Indeed, if $c_1\in (W)$ then either $c_1=w_{12}$ and so $c_2$ cannot be in $(W)$, or $c_1=x_jw_{12}$ and then $w_{12}\in [f_1,c_1]$ cannot divide $c_2$ since the intervals are disjoint.

From now on assume that $r>2$.

{\bf Case 3}, $c_1\in (w_{12})$,  $f_i\not |c_1$ for $i>2$ and $c_i\not \in (w_{12})$ for  $1<i\leq r$.

First suppose that $w_{12}\in B$.  We have  $c_1=x_jw_{12}$ for some $j$ and we see that  $b=f_1x_j\not\in (f_2,\ldots,f_r)$.  Set
$T=(f_2,\ldots,f_r,B\setminus
\{b,E\})$. In the following exact sequences
$$0\to T/(J\cap T)\to I/J\to I/(J,T)\to 0$$
$$0\to (w_{12})/(J\cap (w_{12}))\to T/(J\cap T)\to T/((J,w_{12})\cap T)\to 0$$
the last terms have depth $\geq d+1$ since $b\not\in (J,T)$ and using the induction hypothesis in the second situation. As the first term of the second sequence has depth $\geq d+1$ we get $\depth_ST/(J\cap T)\geq d+1$ and so $\depth_SI/J\geq d+1$ using the Depth Lemma in both exact sequences.

If $w_{12}\in C$ then both monomials $b,b'$ from $B\cap [f_1,c_1]$ are not in $(f_2,\ldots,f_r)$ and the above proof goes with $b'$ instead $w_{12}$.

{\bf Case 4}, $r=3$.

By Case 1 we may suppose that $C\subset (W)$. Then $w_{12},w_{13},w_{23}$ are different because otherwise only one $c_i$ can be in $(W)$. We may suppose that $c_1\in (w_{12}) $, $c_2\in (w_{23}) $, $c_3\in (w_{13}) $, because each $c_i $ is a multiple of one $w_{ij}$ which can be present just in one interval since these are disjoint. If $f_3|c_1$ then $w_{13}$ is present in both intervals $[f_1,c_1]$, $[f_3,c_3]$. If let us say $w_{12}\in C$, then $c_2,c_3\not\in (w_{12})$ because $c_3\not=c_1\not=c_2$. Thus we are in Case 3.

  If $w_{12}\in B$ and $c_2,c_3\not\in (w_{12})$  then we are in Case 3. Otherwise, we may suppose that either $c_2\in (w_{12})$, or $c_3\in (w_{12})$. In the first case, we have $w_{12}$ in both intervals $[f_1,c_1]$, $[f_2,c_2]$, which is false. In the second case, we have also $w_{23}$ present in both intervals $[f_2,c_2]$, $[f_3,c_3]$, again false.

{\bf Case 5}, $r=4$, $c_1\in (w_{12})$, $w_{12}\in B$, $f_i\not |c_1$ for $2<i\leq 4$, $c_3\in  (w_{12})$.

It follows that $c_3\in (w_{23})$.  Thus $c_2\not\in (w_{23})$, that is $f_3\not |c_2$, because otherwise   the intervals $[f_2,c_2]$, $[f_3,c_3]$ will contain $w_{23}$, which is false. If $c_2\in (w_{12})$ then the intervals $[f_1,c_1]$, $[f_2,c_2]$ will contain $w_{12}$.
 It follows that $c_2\in (w_{24})$. Note that $c_4\not\in (w_{24})$ because otherwise $w_{24}$ belongs to $[f_2,c_2]\cap [f_4,c_4]$. If $c_3\not\in (w_{24})$ then we are in Case 3 with $w_{24}$ instead $w_{12}$ and $c_2$ instead $c_1$.

Remains to see the case when $c_3\in (f_1)\cap (f_2)\cap (f_3)\cap (f_4)$. Then $c_4\not\in (f_3)$ because otherwise $w_{34}$ is in $[f_3,c_3]\cap [f_4,c_4]$. In the exact sequence
$$0\to (f_3)/(J\cap (f_3))\to I/J\to I/(J,f_3)\to 0$$
the last term has depth $\geq d+1$ by induction hypothesis. The first term has depth $\geq d+1$ since for example $w_{23}\not \in J$. By the Depth Lemma we get $\depth_SI/J\geq d+1$.

{\bf Case 6}, $r=4$, the general case.

Since $|W|\leq 6$ there exist an interval, let us say $[f_1,c_1]$, containing just one $w_{ij}$, let us say $w_{12}$. Thus no $f_i$, $2<i\leq 4$ divides $c_1$.
If $w_{12}\in C$ then no $c_i$, $i>1$ belongs to $(w_{12})$ because otherwise $c_i=c_1$.
If $w_{12}\in B$ and one $c_i\in (w_{12})$, $i>1$ then we must have $i=2$ because otherwise we are in Case 5. But
if $c_2\in (w_{12})$ then $w_{12}$ is present in both intervals $[f_1,c_1]$, $[f_2,c_2]$, which is false. Thus $c_i\not\in (w_{12})$ for all $1<i\leq 4$, that is  Case 3.
\hfill\ \end{proof}

\begin{Remark}{\em When $r>4$ the statement of the above lemma is not valid anymore, as shows the following example.}
\end{Remark}

\begin{Example} \label{e3} {\em Let $n=5$, $d=1$, $I=(x_1,\ldots,x_5)$, $$J=(x_1x_3x_4,x_1x_2x_4,x_1x_3x_5,x_2x_3x_5,x_2x_4x_5).$$ Set $c_1=x_1x_2x_3$, $c_2=x_2x_3x_4$, $c_3=x_3x_4x_5$, $c_4=x_1x_4x_5$, $c_5=x_1x_2x_5$. We have $C=\{c_1,\ldots,c_5\}$ and $B=W$. Thus $s=2r$ and $\sdepth_SI/J=3$ because we have a partition on $I/J$  given by the intervals $[x_i,c_i]$, $i\in [5]$. But $\depth_SI/J=1$ because of the following exact sequence
$$0\to I/J\to S/J\to S/I\to 0$$
where the last term has depth $0$ and the middle $\geq 2$.}
\end{Example}

The proposition below is an extension of \cite[Lemma 3.2]{AP}, its proof is given in the next section.

\begin{Proposition}\label{ml} Suppose that the following conditions hold:
\begin{enumerate}
\item{} $r=4$, $8\leq s\leq q+4$,
\item{} $C\subset (\cup_{i,j\in [4],i\not=j} (f_i)\cap (f_j))\cup ((E)\cap (f_1,\ldots,f_4))\cup (\cup_{a,a'\in E, a\not =a'}(a)\cap (a'))$,
\item{}  there exists $b\in (B\cap (f_1))\setminus (f_2,f_3,f_4)$ such that $\sdepth_SI_{b}/J_b\geq d+2$ for $I_b=(f_2,\ldots,f_r,B\setminus\{b\})$, $J_b=J\cap I_{b}$,
\item{} the least common multiple $\omega_1$ of $f_2,f_3,f_4$ is not in $(C_3\setminus W)\cap (E)$ (see Example \ref{exe}).
\end{enumerate}
Then either  $\sdepth_SI/J\geq d+2$, or there exists a nonzero   ideal $I'\subsetneq I$ generated  by a subset of $\{f_1,\ldots,f_4\}\cup B$  such that  $\depth_SI/(J,I')\geq d+1$ and either $\sdepth_S I'/J'\leq d+1$ for  $J'=J\cap I'$ or  $\depth_S I'/J'\leq d+1$ .
\end{Proposition}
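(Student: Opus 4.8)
The plan is to exploit hypothesis (iii): the ideal $I_b$ has $\sdepth\geq d+2$, so there is a partition $\mathcal P_b$ of $P_{I_b\setminus J_b}$ into intervals of lower bound degree $\geq d+2$, with distinguished intervals $[f_j,c_j]$ for $j=2,3,4$ and $[\beta,c_\beta]$ for the degree-$(d+1)$ generators $\beta$ of $I_b$ not among $f_2,f_3,f_4$. The top monomials $c_j,c_\beta$ may be taken of degree $d+2$. Since $b\in (B\cap(f_1))\setminus(f_2,f_3,f_4)$, the monomial $b$ is \emph{not} covered by $\mathcal P_b$; the strategy is to try to extend $\mathcal P_b$ to a partition of all of $P_{I\setminus J}$ of sdepth $\geq d+2$ by producing an interval of the form $[f_1,c_1]$ with $c_1$ of degree $d+2$ that is disjoint from all intervals of $\mathcal P_b$ and that swallows $b$ together with the other degree-$(d+1)$ monomials divisible by $f_1$ but not yet used. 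If this succeeds we get $\sdepth_SI/J\geq d+2$, the first alternative. If it fails, the obstruction itself will produce the ideal $I'$ for the second alternative.

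**Key steps.** First I would list the squarefree monomials of degree $d+1$ lying in $(f_1)\setminus(f_2,f_3,f_4)$ that are not already used in $\mathcal P_b$; call this set $B_1$ (it contains $b$). Each such monomial is $f_1x_k$ for various $k$. Using condition (ii), every element of $C\cap(f_1)$ is a common multiple of $f_1$ with some $f_j$ ($j\in\{2,3,4\}$) or with some $a\in E$, hence is already "claimed" combinatorially by the structure near $f_2,f_3,f_4$ or by the intervals $[a,c_a]$. I would then argue by cases on whether $B_1$ can be partitioned into one interval $[f_1,c_1]$ of sdepth $d+2$ plus (at worst) singletons that must however have degree $d+1<d+2$ — so singletons are not allowed, forcing every element of $B_1$ to be covered by an interval of the form $[f_1,c]$ or $[f_1x_k,c']$ with $c,c'$ of degree $d+2$. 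The degree-$d+2$ tops available over $f_1$ are exactly $x_tw_{1j}$-type monomials; condition (iv) (that $\omega_1=\lcm(f_2,f_3,f_4)\notin (C_3\setminus W)\cap(E)$) is what prevents a degenerate collision that would otherwise block building $[f_1,c_1]$ while simultaneously keeping the $[f_j,c_j]$ disjoint. When the extension goes through, set $c_1$ to be the chosen top and we are done with the first alternative.

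**The ideal $I'$.** If the extension fails, there must be a nonempty subset $G$ of the degree-$(d+1)$ monomials over $f_1$ (including $b$, or possibly $f_1$ itself) that cannot be absorbed. I would then set $I'$ to be generated by $f_1$ together with $B\setminus(\text{monomials used up in }\mathcal P_b\cup\{\text{the attempted }[f_1,c_1]\})$, or more precisely by the $f_i$ and $b$-type monomials that remain uncovered, and $J'=J\cap I'$. By construction $I'\subsetneq I$ (it omits at least the interval data of $\mathcal P_b$, and $I'\neq I$ since $e\geq1$-type reasoning as in Lemma~\ref{el}), and $I/(J,I')$ inherits the partition by the intervals $[f_j,c_j]$, $[\beta,c_\beta]$, $[a,c_a]$, all of sdepth $\geq d+2>d+1$, plus possibly $[f_1,c_1]$ if it was partly built; crucially the complementary term has $\depth\geq d+1$ because none of $f_1,f_2,f_3,f_4$ is annihilated by the relevant variable (this is the Depth Lemma input, exactly as in the proofs of Lemmas~\ref{el} and~\ref{dep}). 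Finally, since $\sdepth_SI/J=d+1$ and $I/(J,I')$ has sdepth $\geq d+2$, the exact sequence $0\to I'/J'\to I/J\to I/(J,I')\to 0$ together with \cite[Lemma~2.2]{R} forces $\sdepth_SI'/J'\leq d+1$; if moreover $I'$ falls under a case where Conjecture~\ref{c} is already known (e.g.\ $r'\leq 4$ via Theorem~\ref{m} applied inductively, or $E'=\emptyset$), we even get $\depth_SI'/J'\leq d+1$. Either way the second alternative holds.

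**Main obstacle.** The hard part will be the bookkeeping in the failure case: showing that the uncovered set $G$ is genuinely nonempty and that the residual generators actually form a \emph{proper} subideal with $I'\neq J'$, i.e.\ that $\mathcal P_b$ cannot be rearranged to also cover $b$ — this is where the quantitative hypothesis $8\leq s\leq q+4$ in (i) must be used, to rule out the pathological abundance of degree-$(d+2)$ monomials that would let every degree-$(d+1)$ monomial be absorbed (compare Example~\ref{e3}, where $s=2r$ and the analogue fails). Condition (iv) has to be invoked precisely at the point where one wants $[f_1,c_1]$ and $[f_j,c_j]$ simultaneously disjoint with $c_1,c_j$ of degree $d+2$, and I expect the case split will mirror, at the level of $f_1$ versus $\{f_2,f_3,f_4\}$, the six-case analysis of Lemma~\ref{dep}.
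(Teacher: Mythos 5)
Your overall strategy---start from the partition $P_b$ guaranteed by (3), try to absorb $b$ into an interval $[f_1,c_1]$ to get $\sdepth_SI/J\geq d+2$, and otherwise extract an ideal $I'$ from the unabsorbed part of $B$---is indeed the skeleton of the paper's argument. But the proposal is missing the machinery that makes both alternatives actually work, and at the decisive point it substitutes an assertion for the hardest step. First, the correct choice of $I'$ is not ``$f_1$ together with the monomials not used up in $P_b$'' (note that $P_b$ uses up \emph{all} of $B\setminus\{b\}$, so that recipe degenerates). The paper defines, for a starting monomial $a_1\in B\setminus\{b,u_2,u_2',\ldots,u_4,u_4'\}$, the set $T_1$ of elements of $B$ reachable from $a_1$ by a \emph{path} $a_{l+1}\mid h(a_l)$ that is not \emph{bad} (i.e.\ whose $h$-images avoid $(b)$), and takes $I'=(f_{k_1},\ldots,f_{k_\nu},\,B\setminus T_1)$. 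The point of this construction is that every divisor from $B$ of a monomial of $h(T_1)$ lies in $T_1\cup\{u_2,\ldots,u_4'\}$, so $P_b$ restricts to a sdepth-$(d+2)$ partition of $I/(J,I')$; your $I'$ has no analogous property. Bad paths are exactly the obstruction you gesture at: when one occurs, a chain of interval swaps along the path converts it into an interval $[f_1,\cdot]$ containing $b$ and yields $\sdepth_SI/J\geq d+2$ directly (Lemmas~\ref{ii}--\ref{ii2}), which is why one may assume no bad path starts from $a_1$.

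Second, and more seriously, your justification that $\depth_SI/(J,I')\geq d+1$ (``because none of $f_1,\ldots,f_4$ is annihilated by the relevant variable'') is not a proof and is not how the bound is obtained. Since $I/(J,I')$ still contains the degree-$d$ generators $f_j$ with $U_1\cap(f_j)\neq\emptyset$, the only tool available is Lemma~\ref{dep}, which requires exhibiting \emph{four pairwise disjoint} intervals $[f_j,c_j]$ with $c_j\in C$. Producing these disjoint intervals inside $U_1$ is where almost all of the paper's work goes (the counting of $|T_1\setminus E|$ against $|C_{23}'|$, Lemmas~\ref{i0}, \ref{iv}, \ref{iv''}, \ref{v}, and the final case analysis $1')$--$7')$); it is also precisely where hypothesis (4) enters, since if $\omega_1\in(C_3\setminus W)\cap(E)$ the set $T_1$ can collapse to a single monomial of $E$ whose image is $\omega_1$, and no disjoint system exists (Remark~\ref{M''}). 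Your reading of hypothesis (1) is also off: $s\geq 8$ is needed merely to guarantee that some $a_1\in B\setminus\{b,u_2,\ldots,u_4'\}$ exists so a path can be started, not to control an ``abundance'' of elements of $C$. As written, the proposal would not close either alternative of the conclusion.
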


\begin{Proposition} \label{m'} Conjecture \ref{c} holds for $r=4$ when  the least common multiples  $\omega_i$ of $f_1,\ldots,f_{i-1},f_{i+1},\ldots,f_4$,  $i\in [4]$ are not in $(C_3\setminus W)\cap (E)$. In particular, Conjecture \ref{c} holds when $r=4$ and $E=\emptyset$.
\end{Proposition}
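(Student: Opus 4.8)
The plan is to reduce Conjecture \ref{c} for $r=4$ to the combinatorial bookkeeping established in Lemmas \ref{el}, \ref{dep} and Proposition \ref{ml}, and to an induction on the number of generators. First I would set up the standard reductions: by \cite[Lemma 1.6]{PZ} we may assume $E$ consists only of monomials of degree $d+1$, and by the known cases (Theorem \ref{ap}, item (1)) Conjecture \ref{c} already holds for $r'\le 3$, so we are free to use it whenever we peel off a generator $f_i$ or pass to a subideal generated by fewer than four of the $f_i$'s together with some degree-$(d+1)$ monomials. I would also recall that $\depth_S I/J\ge d$ always, and that by \cite[Lemma 1.1]{PZ2} it suffices to find \emph{some} $c\in C$ not lying in the ideal generated by the other generators in order to conclude $\depth_S I/J\le d+1$; hence we may assume the containment hypothesis (2) of Proposition \ref{ml}, namely $C\subset (\cup_{i\ne j}(f_i)\cap(f_j))\cup((E)\cap(f_1,\dots,f_4))\cup(\cup_{a\ne a'\in E}(a)\cap(a'))$.

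Next I would dispose of the genuinely special configuration flagged in the paper, where all four $f_i$ lie in $(v)$ for some squarefree monomial $v$ of degree $d-1$. In that situation $I=vI''$ where $I''$ is generated in degree $1$ by four variables (plus the degree-$2$ part coming from $E$), and one analyses $I''/J''$ directly --- this is the ''$r=4$, $E\ne\emptyset$'' complication the introduction warns about, and it is handled by the argument promised for Theorem \ref{m1}; under the extra hypothesis of Proposition \ref{m'} (that the $\omega_i$ are not in $(C_3\setminus W)\cap(E)$) the pathology that Example \ref{exe}/Remark \ref{M''} would cause does not arise, and in particular when $E=\emptyset$ it is vacuous. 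Having set this aside, assume we are in the generic case. Now examine whether $\sdepth_S I/J=d+1$ forces the combinatorial data to meet the hypotheses of Proposition \ref{ml}. The key point is: if for each $i\in[4]$ there is $c_i\in C\cap(f_i)$ with the four intervals $[f_i,c_i]$ pairwise disjoint, then Lemma \ref{dep} gives $\depth_S I/J\ge d+1$, and since $\sdepth_S I/J=d+1$ we must in fact show this is the only way depth can exceed $d$; conversely, if no such disjoint family exists, then any partition of $P_{I\setminus J}$ realizing sdepth $\ge d+2$ is obstructed, and we can extract the element $b\in(B\cap(f_1))\setminus(f_2,f_3,f_4)$ and the subideal $I_b$ required in hypothesis (3) of Proposition \ref{ml}.

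With the hypotheses of Proposition \ref{ml} verified, its conclusion gives a dichotomy. Either $\sdepth_S I/J\ge d+2$, contradicting the assumption $\sdepth_S I/J=d+1$, so this branch is empty; or there is a nonzero ideal $I'\subsetneq I$ generated by a subset of $\{f_1,\dots,f_4\}\cup B$ with $\depth_S I/(J,I')\ge d+1$ and either $\sdepth_S I'/J'\le d+1$ or $\depth_S I'/J'\le d+1$ (where $J'=J\cap I'$). In the first of these sub-cases, $I'$ is generated by fewer than four squarefree monomials of degree $d$ together with monomials of degree $\ge d+1$, so Conjecture \ref{c} applies to $I'/J'$ (it is known for $r'\le 3$) and yields $\depth_S I'/J'\le d+1$; in the second sub-case we have that bound directly. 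Either way, feed this into the short exact sequence $0\to I'/J'\to I/J\to I/(J,I')\to 0$: the last term has depth $\ge d+1$ and the first term has depth $\le d+1$, so the Depth Lemma forces $\depth_S I/J\le d+1$, which is exactly Conjecture \ref{c} for $r=4$. The ''in particular'' assertion is immediate since $E=\emptyset$ makes hypothesis (4) and the whole $(C_3\setminus W)\cap(E)$ obstruction vacuous.

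The main obstacle I anticipate is showing that the hypotheses of Proposition \ref{ml} are actually met under $\sdepth_S I/J=d+1$ --- in particular, extracting a suitable $b$ with $\sdepth_S I_b/J_b\ge d+2$ (hypothesis (3)) when no fully disjoint family $[f_i,c_i]$ exists, and ruling out, or separately treating, the $\omega_i\in(C_3\setminus W)\cap(E)$ configuration (hypothesis (4)). This is precisely the place where ''path'' arguments in the style of \cite[Lemma 4.1]{PZ2} are needed: one follows chains $f_i\mid b\mid c$ inside a hypothetical sdepth-$(d+2)$ partition, and the combinatorial constraint $8\le s\le q+4$ from hypothesis (1) of Proposition \ref{ml} is what makes the counting close. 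I would expect the bulk of the work, and the risk of an overlooked case, to live here rather than in the final exact-sequence step, which is routine.
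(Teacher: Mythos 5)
Your overall route is the paper's route: reduce to Proposition \ref{ml} via [PZ2, Lemma 1.1] for hypothesis (2), note that the $\omega_i$ assumption supplies hypothesis (4), and finish with the short exact sequence $0\to I'/J'\to I/J\to I/(J,I')\to 0$ and the Depth Lemma, invoking Theorem \ref{ap} (or induction on $s$) for $\depth_S I'/J'\le d+1$. That closing step is correct. But you leave a genuine gap exactly where you flag your uncertainty: hypotheses (1) and (3) of Proposition \ref{ml} are never actually established, and the mechanism you propose for (3) --- a dichotomy on whether a disjoint family $[f_i,c_i]$ exists, followed by ``path arguments in the style of [PZ2, Lemma 4.1]'' --- is not how it works and is logically confused (you speak of obstructing a partition of $P_{I\setminus J}$ of sdepth $\ge d+2$, but the standing assumption is $\sdepth_S I/J=d+1$, so no such partition exists; the relevant partition is one of $I_b/J_b$). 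The path machinery lives entirely inside the proof of Proposition \ref{ml}, which you are using as a black box, so it is not available to you here.

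The missing steps are short but they are the actual content of the proof. For hypothesis (1): $s\ge 2r=8$ and $s\le q+4$ come from [P1, Theorem 1.3] and [Sh, Theorem 2.4] (in the forms [AP, Theorems 0.3, 0.4]); if either fails one concludes $\depth_S I/J\le d+1$ directly. For the existence of $b$: if $B\cap(f_1,\dots,f_4)\subset W$ then it has at most $6$ elements, so by [Sh, Theorem 2.4] the first term of $0\to (f_1,\dots,f_4)/(J\cap(f_1,\dots,f_4))\to I/J\to (E)/((J,f_1,\dots,f_4)\cap(E))\to 0$ has depth $\le d+1$ and the Depth Lemma finishes; hence after renumbering there is $b\in (B\cap(f_1))\setminus(f_2,f_3,f_4)$. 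For hypothesis (3): consider $0\to I_b/J_b\to I/J\to I/(J,I_b)\to 0$; if $\sdepth_S I_b/J_b\le d+1$ then $\depth_S I_b/J_b\le d+1$ by Theorem \ref{ap} (the ideal $I_b$ has only three degree-$d$ generators), while the last term is isomorphic to $(f_1)/((f_1)\cap(J,I_b))$ and has depth $\ge d+1$ because $b\notin(J,I_b)$, so the Depth Lemma again gives $\depth_S I/J\le d+1$ and we are done; otherwise $\sdepth_S I_b/J_b\ge d+2$, which is hypothesis (3). Your detour through the special configuration $f_i\in(v)$ is also unnecessary for this proposition --- that case only enters in Theorem \ref{m1}, where the $\omega_i$ hypothesis is dropped. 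Lemma \ref{dep} is not applied directly in this proof either; it is used inside Proposition \ref{ml} to guarantee $\depth_S I/(J,I')\ge d+1$.
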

\begin{proof}

By Theorems \cite[Theorem 1.3]{P1}, \cite[Theorem 2.4]{Sh} (more precisely the particular forms given in \cite[Theorems 0.3, 0.4]{AP}) we may suppose that $8=2r\leq s\leq q+4$ and we    may assume  that    $E$ contains only monomials of degrees $d+1$ by \cite[Lemma 1.6]{PZ}.
 We  may assume that there exists  $b\in B\cap (f_1,\ldots,f_4)$ which is not in $W$  because otherwise $ B\cap (f_1,\ldots,f_4)\subset B\cap W$ and therefore
 $|B\cap (f_1,\ldots,f_4)|\leq |B\cap W|\leq 6$.
   By \cite[Theorem 2.4]{Sh} this implies the depth $\leq d+1$ of the first term  of   the exact sequence
 $$0\to (f_1,\ldots,f_r)/(J\cap (f_1,\ldots,f_r))\to I/J\to (E)/((J,f_1,\ldots,f_r)\cap (E))\to 0$$
and then the middle has depth $\leq d+1$ too using
 the Depth Lemma.

   Renumbering $f_i$ we may suppose that there exists $b\in (f_1)\setminus (f_2,\ldots,f_4)$.
  As in the proof of \cite[Theorem 1.7]{AP} we may suppose that the first term of the exact sequence
 $$0\to I_b/J_b\to I/J\to I/(J,I_b)\to 0$$
 has sdepth $\geq d+2$. Otherwise it has depth $\leq d+1$ by Theorem \ref{ap}. Note that the last term is isomorphic with $(f_1)/((f_1)\cap (J,I_b))$ and it has depth $\geq d+1$ because $b\not\in (J,I_b)$.  Then  the middle term of the above exact sequence has depth  $\leq d+1$ by the Depth Lemma.

  Thus we may assume that the condition (3) of Proposition \ref{ml} holds. Also we may apply  \cite[Lemma 1.1]{PZ2} and see that the condition (2) of Proposition \ref{ml} holds. Applying Proposition \ref{ml}  we get either $\sdepth_SI/J\geq d+2$ contradicting our assumption, or
 there exists a nonzero   ideal $I'\subsetneq I$ generated by a subset $G$ of $ B$, or by $G$ and a subset of $\{f_1,\ldots,f_4\}$  such that  $\sdepth_S I'/J'\leq d+1$ for  $J'=J\cap I'$ and $\depth_SI/(J,I')\geq d+1$.
  In the last case we see that  $\depth_S I'/J'\leq d+1$  by Theorem \ref{ap}, or by induction on $s$, and so $\depth_SI/J\leq d+1$ applying  in the following exact sequence
$$0\to I'/J'\to I/J\to I/(J,I')\to 0$$
 the Depth Lemma.
\hfill\ \end{proof}

  \section{Proof of Proposition \ref{ml}}

Since   $\sdepth_SI_{b}/J_{b}\geq d+2$ by (3),  there exists a partition  $P_b$ on $I_{b}/J_{b}$ with sdepth $d+2$. We may choose $P_b$ such that each interval starting with a squarefree monomial of degree $d$, $d+1$ ends with a  monomial of $C$. In $P_{b}$ we  have three disjoint intervals $[f_2,c'_2]$, $[f_3,c'_3]$, $[f_4,c'_4]$. Suppose that $B\cap [f_i,c'_i]=\{u_i,u'_i\}$, $1<i\leq 4$.
For all $b' \in B\setminus\{b,u_2,u'_2,\ldots,u_4,u'_4\}$ we have an interval $[b',c_{b'}]$. We define $h:
B\setminus\{b,u_2,u'_2,\ldots,u_4,u'_4\}\to C$ by  $b'\longmapsto c_{b'}$. Then $h$ is an injection and $|\Im h|= s-7\leq q-3$.

   We follow the proofs of \cite[Lemmas 3.1, 3.2]{AP}.
 A sequence $a_1,\ldots, a_k$ is called a {\em path} from $a_1$ to $a_k$ if the following statements hold:

   (i) $a_l\in B\setminus \{b,u_2,u_2',\ldots,u_4,u_4'\}$, $l\in [k]$,

   (ii) $a_l\not=a_j$ for $1\leq l<j\leq k$,

   (iii) $a_{l+1}|h(a_l)$  for all $1\leq l<k$.

    This path is {\em weak}  if $h(a_j)\in (b,u_2,u_2',\ldots,u_4,u'_4)$ for some $j\in [k]$. It is {\em bad} if $h(a_j)\in (b)$ for some $j\in [k]$ and it is {\em maximal} if   all divisors from $B$ of $h(a_k)$ are in $\{b,u_2,u_2',\ldots,u_4,u'_4,a_1,\ldots,a_k\}$. We say that the above path {\em starts with} $a_1$.
Note that here the notion of path is more general than the notion of path used in \cite{PZ2} and \cite{AP}.

By hypothesis $s\geq 8$ and there exists
$a_1\in B\setminus \{b,u_2,u_2',\ldots,u_4,u'_4\}$.  We construct below, as an example,  a path with $k>1$. By recurrence choose if possible $a_{p+1}$  to be a divisor from $B\setminus \{b,u_2,u'_2,\ldots,u_4,u_4',a_1,\ldots,a_{p}\}$ of $m_p=h(a_p)$, $ p \geq 1$. This construction ends at step $p=e$ if all divisors from $B$ of $m_{e}$ are in $\{b,u_2,u'_2,\ldots,u_4,u_4',a_1,\ldots,a_{e}\}$. This is a maximal path. If one $m_p\in  (u_2,u'_2,\ldots,u_4,u_4')$ then the constructed path is weak. If one $m_{p}\in (b)$ then this path is  bad.

We start the proof with some helpful lemmas.
\begin{Lemma} \label{i0} $P_b$ could be changed in order to have the following properties:
\begin{enumerate}
\item{} For all $1<i<j\leq 4$ with $u_i,u_j\not\in W$ and
 $w_{ij}\in B\setminus \{u_2,u'_2,\ldots,u_4,u'_4\}$ it holds that $h(w_{ij}) \not\in (u_i)\cap (u_j)$,
 \item{} For each $1\leq i<j\leq 4$ with $u_j\in W$,  $u'_j\not\in W$ and
 $w_{ij}\in B\setminus \{u_2,u'_2,\ldots,u_4,u'_4\}$ it holds that $h(w_{ij})\not\in (u_j)$ and if  $h(w_{ij})\in (u'_j)$ then $i>1$,
 \item{} For each $1\leq i<j\leq 4$ with $u_j,u'_j\not\in W$ and
 $w_{ij}\in B\setminus \{u_2,u'_2,\ldots,u_4,u'_4\}$ it holds that $h(w_{ij})\not\in (u_j,u'_j)$.
 \end{enumerate}
 \end{Lemma}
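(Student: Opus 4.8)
All three items are normalisation statements about the partition $P_b$, and I would prove them by one common device. Call $b'\in B\setminus\{b,u_2,u'_2,\dots,u_4,u'_4\}$ a \emph{free generator} (these are exactly the monomials on which $h$ is defined). If one of the forbidden patterns in (1)--(3) occurs, I would perform an explicit local surgery on $P_b$, replacing two intervals by two intervals of the same Stanley depth $d+2$ still ending in $C$, so that the pattern at that pair disappears; then I would argue that after finitely many surgeries none of the patterns survives. The clean way to organise the finiteness is to fix once and for all a partition $P_b$ (with the standing properties: $\sdepth P_b=d+2$, every interval ending in $C$, and containing the three disjoint intervals $[f_i,c'_i]$, $1<i\le 4$) that is \emph{minimal} for a compound invariant whose primary component is the number of free generators lying in $W$, and to derive a contradiction from any violation of (1)--(3).

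For (1): suppose $u_i,u_j\notin W$, $w=w_{ij}$ is a free generator and $g:=h(w)\in(u_i)\cap(u_j)$. Since $g$ has degree $d+2$ and is divisible by the three pairwise distinct degree-$(d+1)$ monomials $w,u_i,u_j$, the local picture is rigid: from $f_i\mid w\mid g$ and $f_i\mid u_i\mid g$ one gets $[f_i,g]=\{f_i,w,u_i,g\}$, while $[f_i,c'_i]=\{f_i,u_i,u'_i,c'_i\}$ with $u'_i\mid c'_i$ and $[u'_i,c'_i]=\{u'_i,c'_i\}$. Hence I would replace $[f_i,c'_i]$ and $\{w,g\}$ by the disjoint intervals $[f_i,g]$ and $[u'_i,c'_i]$: this is again a partition of $I_b/J_b$ of Stanley depth $d+2$, in which the interval through $f_i$ has degree-$(d+1)$ elements $w,u_i$, so $w$ has ceased to be a free generator, $u'_i$ has become one, and the pattern at $(i,j)$ is gone. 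Items (2) and (3) use the twin surgery that absorbs $w=w_{ij}$ into $[f_j,c'_j]$: if $u_j\mid h(w)$ (the case one wants to exclude in (2), and, after a relabelling of $u_j,u'_j$, in (3)) then $[f_j,h(w)]=\{f_j,w,u_j,h(w)\}$, and replacing $[f_j,c'_j],\{w,h(w)\}$ by $[f_j,h(w)],[u'_j,c'_j]$ keeps $\sdepth=d+2$ and frees $u'_j$; in situation (2) and (after relabelling) in (3) one has $u'_j\notin W$, so this strictly lowers the primary invariant. The clause ``$i>1$'' in (2) is forced: there is no interval starting at $f_1$ in $P_b$ because $I_b=(f_2,\dots,f_r,B\setminus\{b\})$, so $w_{1j}$ can only be absorbed into $[f_j,c'_j]$, and when $u'_j\mid h(w_{1j})$ the only surgery available there frees $u_j\in W$ and hence does not lower the primary invariant, so that particular obstruction cannot in general be removed --- which is exactly what the statement leaves open.

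The main obstacle is the termination bookkeeping. A surgery that removes one forbidden pattern frees a new generator ($u'_i$ in (1), or $u_j$ in the residual $i=1$ case of (2)) which may itself lie in $W$, and then the primary invariant does not drop; moreover relabelling the two elements of $[f_j,c'_j]$ while fixing (3) can disturb property (1) at another pair. Both difficulties must be controlled by the right well-founded ordering: one refines the invariant (primary: number of free generators in $W$; then, say, the number of those that are a $\lcm$ of the form $\lcm(f_1,f_k)$; then a lexicographic term in a fixed monomial order on $C$), and one carries out the reductions in the order (1), then (2), then (3), checking at each stage that the surgery used preserves the normalisation already achieved and strictly decreases the refined invariant. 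Verifying that this invariant really drops in \emph{every} case --- in particular in the subcases where the primary component stays constant --- is where the genuine work lies; the rest is the routine divisibility bookkeeping sketched above.
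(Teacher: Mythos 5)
Your basic device --- the local surgery replacing $[f_i,c'_i]$, $[w_{ij},h(w_{ij})]$ by $[f_i,h(w_{ij})]$, $[u'_i,c'_i]$, with termination controlled by the number of the $u$'s (equivalently, of the free generators) lying in $W$ --- is exactly the paper's argument, and your reductions for items (1) and (3) match it. The gap is in item (2), where you have inverted the logic of the clause ``if $h(w_{ij})\in(u'_j)$ then $i>1$''. That implication \emph{forbids} the configuration $h(w_{1j})\in(u'_j)$ (its contrapositive is: $i=1$ implies $h(w_{1j})\notin(u'_j)$); it is the configurations with $i>1$ that the lemma leaves open. You argue the other way round: you decline to perform the available swap when $i=1$ because it absorbs $w_{1j}\in W$ while freeing $u_j\in W$ and hence does not lower your primary invariant, and you declare the surviving obstruction to be ``exactly what the statement leaves open''. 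It is not, so your normalisation fails to establish item (2) for $i=1$ --- which is precisely the half of (2) that is used later: Remark \ref{v'} needs that $b_i=w_{1i}$ forces $u_i,u'_i\in W$, and when exactly one of $u_i,u'_i$ lies in $W$ that requires $h(w_{1i})\notin(u'_i)$ as well as $h(w_{1i})\notin(u_i)$.

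The repair is the paper's: when $u_j\in W$, $u'_j\notin W$ and $h(w_{1j})\in(u'_j)$, perform the swap that replaces $u_j$ by $w_{1j}$ anyway. It only preserves (rather than improves) the count of $u$'s in $W$, but it removes the $i=1$ obstruction, and one checks that it does not disturb property (1) nor the intervals $[f_k,c'_k]$ with $u_k,u'_k\in W$; the paper restricts this count-preserving move to $i=1$ precisely so that the overall process still terminates. Your instinct that the termination bookkeeping is the delicate point is sound --- the paper itself is brief about why the count-preserving steps cannot recur indefinitely --- but any refined well-founded invariant you set up must be designed to admit this $i=1$ swap, not to exclude it.
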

 \begin{proof}

Suppose that $w_{ij}\in B\setminus\{u_2,u'_2,\ldots,u_4,u'_4\}$ and $h(w_{ij})\in (u_i)$ for some $2\leq i\leq 4$ and $j\in [4]$, $j\not=i$. We have  $h(w_{ij})=x_lw_{ij}$ for some $l\not\in \supp w_{ij}$ and   it follows that $u_i=x_lf_i$. Changing in $P_b$ the intervals $[f_i,c'_i]$, $[w_{ij},h(w_{ij})]$ with $[f_i,h(w_{ij})]$, $[u'_i,c'_i]$  we may assume that the new $u'_i=w_{ij}$. We will apply this procedure several times
 eventually obtaining a partition $P_b$ with
 the above properties. In case (1)  we change in this way $u'_i$ by $ w_{ij}$. Note that the number of elements   among $\{u_2,u'_2,\ldots,u_4,u'_4\}$ which are from   $B\cap W$ is either preserved or increases by one. Applying  this procedure several time we get  (1)  fulfilled.

In case (3)
 the above procedure  preserves among $\{u_2,u'_2,\ldots,u_4,u'_4\}$ the former elements which were from   $B\cap W$ and includes a new one $w_{ij}$. After several steps we get fulfilled (3).

For case (2) if $u_j\in W$, $u'_j\not\in W$ and $h(w_{ij})\in (u_j) $ we change as above $u'_j$ by $w_{ij}$. Note that the number of elements   among $\{u_2,u'_2,\ldots,u_4,u'_4\}$ which are from   $B\cap W$ increases by one. If $h(w_{ij})\in (u'_j) $ then we may change in this way $u_j$ by $w_{ij}$. We do this only if $i=1$. Note that the number of elements   among $\{u_2,u'_2,\ldots,u_4,u'_4\}$ which are from   $B\cap W$ is preserved. Our procedure does not affect those $c'_i$ with $u_i,u'_i\in W$
and does not affect the property (1). After several such procedures we get also (2) fulfilled.
\hfill\ \end{proof}
From now on we  suppose that $P_b$ has the properties mentioned in  the above lemma.  Moreover, we fix $a_1\in B\setminus \{b,u_2,u'_2,\ldots,u_4,u'_4\}$ and let $a_1,\ldots,a_p$ be a path which is not bad.  For an $a'\in  B\setminus \{b,u_2,u'_2,\ldots,u_4,u'_4\}$  set  $$T_{a'}=\{b'\in B: \mbox{there\ \ exists\ \ a\ \ path}\ \ a'_1=a',\ldots,a'_{e}\ \  \mbox{ not\ \ bad\ \ with}\ \ a'_{e}=b'\},$$
$U_{a'}=h(T_{a'})$, $G_{a'}=B\setminus T_{a'}$. If $a'=a_1$ we write simply $T_1$ instead $T_{a_1}$ and similarly $U_1$, $G_1$.

\begin{Remark}\label{ea} {\em Any divisor from $B$ of a monomial of $U_1$ is in $T_1\cup \{u_2,u'_2,\ldots,u_4,u'_4\}$.}
\end{Remark}

\begin{Lemma} \label{i} If    no weak path and no  bad path starts with $a_1$  then the conclusion of Proposition \ref{ml} holds.
\end{Lemma}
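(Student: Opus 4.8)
The goal is: under the standing assumptions of Proposition \ref{ml} (in particular (1)--(4) hold and $P_b$ already satisfies Lemma \ref{i0}), if no weak path and no bad path starts with $a_1$, then the conclusion of Proposition \ref{ml} holds --- i.e.\ either $\sdepth_SI/J\geq d+2$, or there is a nonzero $I'\subsetneq I$ of the prescribed shape with $\depth_SI/(J,I')\geq d+1$ and $\sdepth_SI'/J'\leq d+1$ (or $\depth_SI'/J'\leq d+1$). The natural candidate is $I'=(G_1)$ together possibly with some of the $f_i$, where $T_1$ is the set of all $b'\in B$ reachable by a not-bad path from $a_1$, $U_1=h(T_1)$, and $G_1=B\setminus T_1$. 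The first thing I would do is check that this $I'$ is a proper, nonzero ideal: it is nonzero because $a_1\in T_1$ is removed but $b$ and the $u_i,u_i'$ are still around (and $r\geq 2$ of the $f_i$ can be kept), and it is proper precisely because $T_1\neq\emptyset$.

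\textbf{Key steps.} First, since no path from $a_1$ is weak, every monomial of $U_1$ has all of its $B$-divisors inside $T_1$ (Remark \ref{ea} already records that divisors lie in $T_1\cup\{u_2,u_2',\dots,u_4,u_4'\}$, and weakness would be exactly the case that a $u_i$ or $u_i'$ or $b$ shows up; so in fact they all lie in $T_1$, using also that the path is not bad). Consequently the intervals $\{[b',h(b')] : b'\in T_1\}$ together with whatever squarefree-degree-$d$ generators and degree-$(d+1)$ generators of $I$ that are disjoint from them form a candidate partition witnessing $\sdepth$. Second, I would try to extend the partition $P_b$ of $I_b/J_b$ to a partition of $I/J$ of sdepth $d+2$: the element $b$ is the only degree-$d$ monomial of $I$ not already heading an interval of $P_b$; if $b$ can be placed in an interval $[b,c]$ with $c\in C$ disjoint from everything used so far, we obtain $\sdepth_SI/J\geq d+2$ and we are done. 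The obstruction to doing this is precisely that every $c\in C\cap(b)$ is already ``used''; condition (2) of Proposition \ref{ml} describes exactly which monomials of $C$ can occur, and the hypothesis that no bad path starts with $a_1$ (so no $h(a_j)\in(b)$) is what lets us rule out the relevant collisions along $T_1$. So: if the extension succeeds, output $\sdepth_SI/J\geq d+2$; otherwise, take $I'=(G_1)$ (or $(G_1)$ plus some $f_i$'s as needed to make $I/(J,I')$ have the right generators). Third, for that $I'$ I would show $\depth_SI/(J,I')\geq d+1$: the quotient $I/(J,I')$ is generated, modulo $J$, by $b$ and the monomials of $T_1$ and (some) $f_i$, and it carries the partition coming from $P_b$ restricted to these intervals $[f_i,c_i']$, $[b',h(b')]$ for $b'\in T_1$ --- all of sdepth $d+2$ --- so by \cite[Theorem 2.4]{Sh} / \cite[Lemma 2.2]{R}-type reasoning (more precisely, because every variable dividing no generator... ) its depth exceeds $d$, giving $\geq d+1$. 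Fourth, for $I'/J'$ with $J'=J\cap I'$: since the whole $I/J$ has $\sdepth \leq d+1$ by hypothesis while the quotient piece has sdepth $d+2$, \cite[Lemma 2.2]{R} forces $\sdepth_SI'/J'\leq d+1$ (if $I'\neq J'$, which holds as $G_1$ contains monomials not in $J$). That is exactly the alternative in the conclusion. Finally, I would invoke the exact sequence $0\to I'/J'\to I/J\to I/(J,I')\to 0$ and the Depth Lemma to close the loop (this is needed only when one wants $\depth_SI'/J'\leq d+1$, via Theorem \ref{ap} or induction on $s$, exactly as in the proof of Proposition \ref{m'}).

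\textbf{Main obstacle.} The delicate point is the second step: showing that if the partition $P_b$ cannot be extended to place $b$, then the intervals heading the quotient $I/(J,I')$ really do give a partition of sdepth $d+2$ with $G_1$ left over as a \emph{clean} proper ideal --- i.e.\ that no monomial of $C$ needed for an interval over $G_1$ has been stolen by $T_1$, and conversely that the monomials $h(b')$ for $b'\in T_1$ do not lie in $(G_1)$. This is where conditions (2) and (4) of Proposition \ref{ml}, the ``not bad, not weak'' hypothesis, and the carefully arranged properties (1)--(3) of $P_b$ from Lemma \ref{i0} all have to be used simultaneously; in particular property (4) (that $\omega_1\notin(C_3\setminus W)\cap(E)$) is what prevents a pathological interval ending at the lcm of $f_2,f_3,f_4$ from wrecking the bookkeeping. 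I expect the proof to proceed by assuming the $\sdepth\geq d+2$ extension fails and then reading off, monomial by monomial in $U_1\cup C\cap(b)$, that $I'=(G_1)$ (suitably augmented by $f_i$'s) does the job; the case analysis mirrors that in \cite[Lemmas 3.1, 3.2]{AP} but is lighter here because paths are allowed to be more general.
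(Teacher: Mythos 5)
Your skeleton is right at the outermost level: you form $T_1$, $U_1=h(T_1)$, $G_1=B\setminus T_1$, take $I'$ to be generated by $G_1$ together with some of the $f_i$, note that the "no weak, no bad path" hypothesis forces all $B$-divisors of monomials of $U_1$ to lie in $T_1$, and close with the exact sequence $0\to I'/J'\to I/J\to I/(J,I')\to 0$ plus \cite[Lemma 2.2]{R}. That is indeed how the paper's proof is organized. But there are two genuine problems. First, you misidentify the obstruction. The element $b$ lies in $G_1\subset I'$ (no path ever passes through $b$), so "placing $b$ in a new interval" is not the issue at all; the issue is the generators $f_j$ with $U_1\cap(f_j)\neq\emptyset$. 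Those $f_j$ cannot be put into $I'$ (otherwise some $h(b')$, $b'\in T_1$, would land in $(J,I')$ and the restricted partition on the quotient would break), so they must survive in $I/(J,I')$, and each needs its own interval $[f_j,h(b_j)]$ with $b_j\in T_1\cap(f_j)$, obtained by swapping $[b_j,h(b_j)]$ for $[f_j,h(b_j)]$ in the restriction of $P_b$. The whole difficulty is making these intervals pairwise disjoint.

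Second, and consequently, you give no argument for $\depth_S I/(J,I')\geq d+1$ (your sentence for this step is literally unfinished). In the worst subcase, where $U_1\cap(f_j)\neq\emptyset$ for all $j\in[4]$, this depth bound is exactly Lemma \ref{dep}, whose hypothesis is the existence of the four disjoint intervals $[f_j,c_j]$ just described. Producing them is the heart of the paper's proof: one first shows that for each pair $l<j$ one can choose $b_l\neq b_j$ with at least one of $c_l,c_j$ outside $(w_{lj})$, then proves the counting estimate $|T_1\setminus E|\geq |C'_{23}|+2$ (where $C'_{23}=U_1\cap C_{23}$) to guarantee two indices $j$ with $c_j\notin C'_{23}$, and finally runs a case analysis on which $w_{ij}\in B$ can appear in which interval. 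None of this appears in your proposal, and without it neither the sdepth $\geq d+2$ of the quotient partition nor the applicability of Lemma \ref{dep} is established; conditions (2) and (4) of Proposition \ref{ml}, which you invoke only vaguely, do not by themselves substitute for this construction.
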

\begin{proof}
 Assume that $[r]\setminus \{j\in [r]: U_1\cap (f_j)\not=\emptyset\}=\{k_1,\ldots,k_{\nu}\}$ for some  $1\leq k_1<\ldots <k_{\nu}\leq 4$, $0\leq \nu\leq 4$. Set $k=(k_1,\ldots,k_{\nu})$, $I'_k = (f_{k_1},\ldots,f_{k_{\nu}},G_1)$,  $J'_k = I'_k \cap J$, and  $I'_0 = (G_1)$,  $J'_0 = I'_0\cap J$ for $\nu=0$. Note that  all divisors from $B$ of a monomial $c\in U_1$ belong to $T_1$, and  $I'_0\not =0$ because $b\in I'_0$.  Consider the following exact sequence
$$0\to I'_{k}/J'_{k}\to I/J\to I/(J,I'_{k})\to 0.$$
If $U_1\cap (f_1,\ldots,f_4)=\emptyset$ then the last  term of the above exact sequence given for $k=(1,\ldots,4)$ has depth $\geq d+1$ and sdepth $\geq d+2$ because  $P_b$ can be restricted to $(T_1)\setminus (J,I'_k)$ since $h(b) \notin I'_{k}$ , for all $ b \in T_1$ (see Remark \ref{ea}). If the first term has sdepth $\geq d+2$ then by  \cite[Lemma 2.2]{R}  the middle term  has  sdepth  $\geq d+2$. Otherwise, take $I'=I'_k$.

If $U_1\cap (f_1,f_2,f_3)=\emptyset$, but there exists $b_4\in T_1\cap (f_4)$,  then set $k=(1,2,3)$. In the following exact sequence
$$0\to I'_k/J'_k\to I/J\to I/(J,I'_k)\to 0$$
the last term has sdepth $\geq d+2$   since $h(b') \notin I'_{k}$ for all $ b' \in T_1$ and we may substitute the interval $[b_4,h(b_4)]$  from the restriction of $P_b$ by $[f_4,h(b_4)]$, the second monomial from  $[f_4,h(b_4)]\cap B$ being also in $T_1$. As above we get either $\sdepth_S I/J \geq d+2$, or  $\sdepth_S I'_k/J'_k \leq d+1$,  $\depth_S I/(J,I'_k) \geq d+1$.

Suppose that
 $U_1\cap (f_{j})\not =\emptyset$ if and only if $\nu<j\leq 4$, for some $0\leq \nu\leq 4$ and set  $k=(1,\ldots,\nu)$.
We omit the  subcases $0<\nu<3$, since they go as in \cite[Lemma 3.2]{AP}, and   consider  only the worst subcase $\nu=0$. Let  $b_{j}\in T_1\cap (f_j)$, $j\in [4]$ and set $c_j=h(b_j)$. For $1\leq l<j\leq 4$ we claim that  we may   choose $b_l\not=b_j$ and such  that one from $c_{l},c_{j}$ is not in $(w_{lj})$. Indeed, if $w_{lj} \not\in B$ and $c_l,c_j\in (w_{lj})$  then necessarily  $c_l=c_j$ and it follows $b_l=b_j=w_{lj}$, which is false.  Suppose that $w_{lj}\in B$ and $c_j=x_pw_{lj}$. Then choose $b_l=x_pf_l\in T_1$. If $c_l=h(b_l)\in (w_{lj})$ then we get $c_l=c_j$ and so $b_l=b_j=w_{lj}$ which is impossible.

We show that we may choose  $b_j\in T_1\cap (f_j)$, $j\in [4]$ such that the intervals $[f_j,c_j]$, $j\in [4]$ are disjoint. Let $C_2$, $C_3$ be as in the beginning of the previous section.
 Set $C'_2=U_1\cap C_2$, $C'_3=U_1\cap C_3$, $C'_{23}=C'_2\cup C'_3$.
 Let ${\tilde c}\in C'_2$, let us say $\tilde c$ is the least common multiple of $f_1,f_2$. Then $\tilde c$ has as divisors two multiples $g_1,g_2$ of $f_1$ and two multiples of $f_2$. If ${\hat c}\in C'_2$ is also a multiple of $g_1$, let us say $\hat c$ is the least common multiple of $f_1,f_3$ then $g_2$ does not divide $\hat c$ and the least common multiple of $f_2,f_3$ is not in $C$. Thus the divisors from $B\setminus E$
 of $\tilde c$, $\hat c$ are at least $7$. Since the divisors from $B\setminus E$ of $\tilde c$, $\hat c$  are in $T_1\setminus E$ we see in this way that $|T_1\setminus E|\geq |C'_2|+3$. If $|C'_2|\not = 0$ then $|C'_3|\leq 1$ and so $|T_1\setminus E|\geq |C'_{23}|+2$. Assume that  $|C'_2|=0$. Then
 $|C'_3|\leq 4$.  Let ${\tilde c}\in C'_3$ be the least common multiple of $f_1,f_2,f_3$ then $w_{12},w_{23},w_{13}$ are the only divisors from $T_1\setminus E$ of $\tilde c$ (this could be not true when $|C'_2|\not =0$ as shows Example \ref{exe}). If ${\hat c}\in C'_3$ is the least common multiple of $f_1,f_2,f_4$ we have also $w_{14},w_{24}$ in $T_1\setminus E$. Similarly, if $|C'_3|\geq  3$ we get also $w_{34}\in T_1\setminus E$. Thus $|T_1\setminus E|\geq |C'_3|+2=|C'_{23}|+2$ also when $|C'_2|=0$.

  Then there exist two different  $b_j\in T_1\cap (f_j)$ such that $c_j=h(b_j)\not \in C'_{23}$ for let us say $j=1,2$ and so each of the intervals $[f_j,c_j]$, $j=1,2$ has at most one monomial from $T_1\cap W$. Suppose the worst subcase when  $[f_1,c_1]$ contains $ w_{12}\in B$, and $[f_2,c_2]$ contains  $w_{2j}\in B$ for some $j\not=2$. First assume that $j\geq 3$, let us say $j=3$.  Then choose as above $b_3\in T_1\cap (f_3)$, $b_4\in T_1\cap (f_4)$ such that $c_3\not\in (w_{23})$, $c_4\not\in (w_{34})$. Then $[f_3,c_3]$ has from $T_1\cap W$ at most $w_{13},w_{34}$ and $[f_4,c_4]$ has from $T_1\cap W$ at most $w_{14},w_{24}$. Thus
 the corresponding intervals are disjoint.

 Otherwise, $j=1$ and we have $c_j=x_{p_j}w_{12}$, $j\in [2]$,  for some $p_j\not \in \supp w_{12}$, $p_1\not =p_2$. Take $b'_1=x_{p_2} f_1$,  $b'_2=x_{p_1} f_2$ and $v_1=h(b'_1)$, $v_2=h(b'_2)$. Then $v_1,v_2$ are  not in $C'_3$ because otherwise $b'_1$, respectively $b'_2$ is in $W$, which is false. Note that
$v_2\not\in (w_{12})$, because otherwise $v_2=x_{p_1}w_{12}=c_1$ which is false since $b_1\not= b'_2$. Similarly $v_1\not \in (w_{12})$. If let us say $v_2\not \in C'_2$ then  we may take $b_2=b'_2$ and we see that for the new $c_2$ (namely $v_2$) the interval $[f_2,c_2]$ contains at most a monomial from $W$, which we assume to be  $w_{23}$ and we proceed as above.  If $v_1,v_2\in C'_2$, we may assume  that $v_1=w_{13}\in C$ and either $v_2=w_{23}\in C$, or $v_2=w_{24}\in C$. In the first case we choose $b_3,b_4$ such that $c_3\not \in (w_{34})$, $c_4\not\in (w_{24})$ and we see that $[f_3,c_3]$ has no monomial  from $W$. Indeed, if $c_3\in (w_{23})$ (the case $c_3\in (w_{13})$ is similar) then $c_3=v_2$, which is false since then
$h(b'_2)=v_2=c_3=h(b_3)$ and so
$b'_2 =b_3\in (w_{23})$, $h$ being injective. Also $[f_4,c_4]$ has at most $w_{14},w_{34}$. Thus taking $b_i=b'_i$, $c_i=v_i$ for $i\in [2]$ we have again the intervals $[f_j,c_j]$, $j\in [4]$  disjoint. Similarly in the second case choose    $b_3,b_4$ such that $c_3\not \in (w_{23})$, $c_4\not\in (w_{34})$ and we see that $[f_3,c_3]$ have at most  $w_{34}$ and $[f_4,c_4]$ have at most $w_{14}$, which is enough, because as above $c_3\not=w_{13}$ and $c_4\not=w_{24}$.

 Next we  replace the intervals $[b_{j},c_{j}]$, $1\leq j\leq 4$  from the restriction of $P_b$ to $(T_1)\setminus (J,I'_0)$ with $[f_j,c_{j}]$, the second monomial from  $[f_j,c_{j}]\cap B$ being also in $T_1$.
 Note  that $I/(J,I'_0)$  has depth $\geq d+1$ by Lemma \ref{dep}.  Thus, as
above we get either $\sdepth_S I/J \geq d+2$, or  $\sdepth_S I'_0/J'_0 \leq d+1$,  $\depth_S I/(J,I'_0) \geq d+1$.
\hfill\ \end{proof}

\begin{Lemma} \label{ii} Let  $a_1,\ldots,a_{e_1}$ be a bad path, $m_j=h(a_j)$, $j\in [e_1]$ and
$m_{e_1}=bx_i$. Suppose that $m_{e_1}\not\in (u_2,u'_2,\ldots,u_4,u'_4)$. Then one of the following statements holds:
\begin{enumerate}
\item{} $\sdepth_SI/J \geq d+2$,
\item{} there exists $a_{e_1+1}\in (B\cap (f_1))\setminus \{b,u_2,u'_2,\ldots,u_4,u'_4\}$ dividing $m_{e_1}$ such that every path $a_{e_1+1},\ldots,a_{e_2}$ satisfies
$\{ a_1,\ldots,a_{e_1}\}\cap \{a_{e_{1}+1},\ldots,a_{e_2}\}=\emptyset.$
\end{enumerate}
 \end{Lemma}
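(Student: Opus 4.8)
The plan is to show that the element $a_{e_1+1}$ demanded in (2) is essentially forced, and that if it fails the path condition of (2) then $P_b$ can be enlarged to a partition of $I/J$ of sdepth $\geq d+2$, giving (1). First I would pin down $a_{e_1+1}$. Since $b\in B\cap(f_1)$ has degree $d+1$, write $b=x_kf_1$ with $k\notin\supp f_1$; as $m_{e_1}=bx_i=x_ix_kf_1$ is a squarefree monomial of degree $d+2$ we get $i\neq k$ and $i\notin\supp f_1$. The only degree $d+1$ divisors of $m_{e_1}$ lying in $(f_1)$ are $b=x_kf_1$ and $a_{e_1+1}:=x_if_1$. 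Since $a_{e_1+1}\mid m_{e_1}$ and $m_{e_1}\notin J$ we have $a_{e_1+1}\in B$; clearly $a_{e_1+1}\neq b$; and $a_{e_1+1}\notin\{u_2,u'_2,\ldots,u_4,u'_4\}$, because every $u_j,u'_j$ is a multiple of $f_j$ for $2\leq j\leq 4$, so were $a_{e_1+1}$ equal to such a $u_j$ or $u'_j$ it would divide $m_{e_1}$, contradicting $m_{e_1}\notin(u_2,\ldots,u'_4)$. Hence $a_{e_1+1}\in(B\cap(f_1))\setminus\{b,u_2,u'_2,\ldots,u_4,u'_4\}$ divides $m_{e_1}$, and it is the unique such element, so it is the only possible witness for (2).

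Now I would split into two cases. If every path $a_{e_1+1},\ldots,a_{e_2}$ satisfies $\{a_1,\ldots,a_{e_1}\}\cap\{a_{e_1+1},\ldots,a_{e_2}\}=\emptyset$, then (2) holds with this $a_{e_1+1}$ and we are done. Otherwise choose such a path and truncate it at the first index $e_2$ with $a_{e_2}\in\{a_1,\ldots,a_{e_1}\}$, say $a_{e_2}=a_g$, $g\in[e_1]$, so that $a_{e_1+1},\ldots,a_{e_2-1}$ avoid $\{a_1,\ldots,a_{e_1}\}$. Because $a_{e_1+1}=x_if_1$ divides $m_{e_1}=h(a_{e_1})$, the sequence $a_g,a_{g+1},\ldots,a_{e_1},a_{e_1+1},\ldots,a_{e_2-1},a_g$ is a cycle in the graph on $B\setminus\{b,u_2,\ldots,u'_4\}$ whose arrows are $a'\to a''$ with $a''\mid h(a')$: its listed vertices are pairwise distinct (the two paths have distinct vertices and the second meets $\{a_1,\ldots,a_{e_1}\}$ only at its last term), and each consecutive pair is an arrow. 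Rotating the intervals of $P_b$ along this cycle (replacing $[a',h(a')]$ by $[a'',h(a')]$ along each arrow $a'\to a''$ of the cycle and leaving the other intervals of $P_b$ untouched) yields again a partition of $I_b/J_b$ of sdepth $\geq d+2$ which now contains the interval $[a_{e_1+1},h(a_{e_1})]=[x_if_1,m_{e_1}]$. I would then replace this two-element interval by $[f_1,m_{e_1}]=\{f_1,x_if_1,b,m_{e_1}\}$, an interval of $P_{I\setminus J}$ since $f_1,m_{e_1}\notin J$; it covers in addition exactly $f_1$ and $b$, and $f_1,b$ are the only squarefree monomials of $I\setminus J$ lying outside $I_b$ (any other multiple of $f_1$ of degree $\geq d+1$ has a degree $d+1$ divisor in $B\setminus\{b\}$, hence lies in $I_b$; here one uses, as elsewhere in this section, that $E$ may be assumed to consist of monomials of degree $d+1$). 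Thus the modified family is a partition of $I/J$ of sdepth $\geq d+2$, so $\sdepth_SI/J\geq d+2$ and (1) holds.

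The step needing the most care is the construction of the cycle: one must check that the two paths, joined through the arrow $a_{e_1}\to a_{e_1+1}$ provided by $a_{e_1+1}=x_if_1\mid m_{e_1}$, really pass through pairwise distinct vertices (covering also the degenerate cases $g=e_1$ and $e_2=e_1+1$), so that the interval rotation along the cycle is legitimate. Once this is in place, enlarging $[x_if_1,m_{e_1}]$ to $[f_1,m_{e_1}]$ and recognising $\{f_1,b\}$ as precisely the monomials of $I\setminus J$ outside $I_b$ are routine.
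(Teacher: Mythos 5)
Your proof is correct and follows essentially the same route as the paper's: you identify $x_if_1$ as the forced candidate for $a_{e_1+1}$, and when some path from it returns to $\{a_1,\ldots,a_{e_1}\}$ you re-index the intervals along the resulting cycle so that $[x_if_1,m_{e_1}]$ appears and can be enlarged to $[f_1,m_{e_1}]$, giving a partition of $I/J$ of sdepth $\geq d+2$. Your cycle-rotation formulation and the explicit check that $I\setminus J$ exceeds $I_b\setminus J_b$ exactly by $\{f_1,b\}$ are just more careful phrasings of the interval shifts the paper performs.
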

 \begin{proof}
If $a_{e_1}= f_1x_i$ then changing in $P_b$ the interval $[a_{e_1},m_{e_1}]$ by $[f_1,m_{e_1}]$ we get a partition on $I/J$ with sdepth $d+2$. If  $f_1x_i\in \{a_1,\ldots,a_{e_1-1}\}$, let us say   $f_1x_i=a_v$, $1\leq v<e_1$ then we may replace in $P_b$ the intervals $[a_k,m_k], v \leq k \leq e_1$ with the intervals $[a_v,m_{e_1}],[a_{k+1},m_k], v \leq k \leq e_1-1$. Now we see that we have in $P_b$ the interval $[a_v,m_v]$ (the new $m_v$ is the old $m_{e_1}$) and switching it with the interval $[f_1,m_v]$ we get a partition with sdepth $\geq d+2$ for $I/J$.
Thus we may assume that $f_1x_i \notin \{ a_1,...,a_{e_1} \}$. Note that $e_1$ could be also $1$ as in Example \ref{b1} when we take $a_1=x_5x_6$, in this case we take $f_1x_i=x_1x_5$ and $\{x_1x_5,x_2x_5\}$ is a maximal path which is weak but not bad.

  By hypothesis $m_{e_1}\not\in (u_2,u'_2,\ldots,u_4,u'_4)$ and so $f_1x_i\not \in \{u_2,u'_2,\ldots,u_4,u'_4\}$.  Then set $a_{e_1+1}=f_1x_i$
    and let  $a_{e_1+1},\ldots,a_{e_2}$ be a path starting with $a_{e_1+1}$ and set
    $ m_p = h(a_p), p > e_1$.  If $a_p=a_v$ for $v\leq e_1$, $p>e_1$ then  change in $P_b$ the intervals $[a_k,m_k], v \leq k \leq p-1$ with the intervals $[a_v,m_{p-1}],[a_{k+1},m_k], v \leq k \leq p-2$. We have  in the new $P_b$ an interval $[f_1x_i,m_{e_1}]$ and switching it to $[f_1,m_{e_1}]$ we get a partition  with sdepth $\geq d+2$ for $I/J$. Thus we may suppose that  $a_{p+1}\not \in \{b,u_2,u'_2,\ldots,u_4,u'_4,a_1,\ldots, a_p\}$ and so (2) holds.
\hfill\ \end{proof}

\begin{Example} \label{b1} {\em
Let $n=7$, $r=4$, $d=1$, $f_i=x_i$ for $i\in [4]$, $E=\{x_5x_6,x_5x_7\}$,  $I=(x_1,\ldots,x_4,E)$ and $$J=(x_1x_7,x_2x_7,x_3x_7,x_4x_7, x_1x_2x_4,x_1x_2x_6,x_1x_3x_4,x_1x_3x_6,x_2x_3x_4,x_2x_4x_5,$$
$$x_2x_5x_6,x_3x_5x_6,x_4x_5x_6).$$  Then $B=$
$$\{x_1x_2,x_1x_3,x_1x_4,x_1x_5,x_1x_6,x_2x_3,x_2x_4,x_2x_5,x_2x_6,x_3x_4,x_3x_5,x_3x_6,x_4x_5,x_4x_6\}\cup E$$ and $$C=\{x_1x_2x_3,x_1x_2x_5,x_1x_3x_5,x_1x_4x_5,x_1x_4x_6,x_1x_5x_6,x_2x_3x_5,x_2x_3x_6,x_2x_4x_6,$$
$$x_3x_4x_5,x_3x_4x_6,x_5x_6x_7\}.$$ We have $q=12$ and $s=q+r=16$. Take $b=x_1x_6$ and \\ $I_b=(x_2,x_3,x_4,B\setminus \{b\},E)$, $J_b=I_b\cap J$. There exists a partition $P_b$ with sdepth $3$ on $I_b/J_b$ given by the intervals $[x_2,x_1x_2x_3]$, $[x_3,x_1x_3x_5]$, $[x_4,x_1x_4x_6]$, $[x_1x_5,x_1x_2x_5]$, $[x_2x_4,x_2x_4x_6]$, $[x_2x_5,x_2x_3x_5]$, $[x_2x_6,x_2x_3x_6]$, $[x_3x_4,x_3x_4x_5]$, $[x_3x_6,x_3x_4x_6]$,\\  $[x_4x_5,x_1x_4x_5]$, $[x_5x_6,x_1x_5x_6]$,  $[x_5x_7,x_5x_6x_7]$. We have $c'_2=x_1x_2x_3$, $c'_3=x_1x_3x_5$, $c'_4=x_1x_4x_6$ and $u_2=x_2x_3$, $u'_2=x_1x_2$, $u_3=x_3x_5$, $u'_3=x_1x_3$, $u_4=x_1x_4$, $u'_4=x_4x_6$. Take $a_1=x_2x_4$, $m_1=x_2x_4x_6$. This is a weak path but not bad. It can be extended to a maximal one
 $x_2x_4,x_2x_6,x_3x_6,x_3x_4,x_4x_5,x_1x_5,x_2x_5$ which is not bad. Bad paths are for example  $\{x_5x_6\}$,  $ \{x_5x_7,x_5x_6\}$,  $ \{ x_5x_7,x_5x_6,x_1x_5,x_2x_5\} $, the last one being maximal.
   Replacing in $P_b$ the intervals $[x_4,x_1x_4x_6]$, $[x_2x_4,x_2x_4x_6]$ with $[x_4,x_2x_4x_6]$, $[x_1,x_1x_4x_6]$ we get a partition on $I/J$ with sdepth $3$.
}
\end{Example}

\begin{Lemma} \label{ii1} Let  $a_1,\ldots,a_{e_1}$ be a bad path, $m_j=h(a_j)$, $j\in [e_1]$ and
$m_{e_1}=bx_i$. Suppose that $a_{e_1}\in E$ and  $m_{e_1}\in (u_2,u'_2,\ldots,u_4,u'_4)$. Then one of the following statements holds:
\begin{enumerate}
\item{} there exists $a_{e_1+1}\in B\setminus (\{b,u_2,u'_2,\ldots,u_4,u'_4\}\cup E)$ dividing $m_{e_1}$ such that every path $a_{e_1+1},\ldots,a_{e_2}$ satisfies
$\{ a_1,\ldots,a_{e_1}\}\cap \{a_{e_{1}+1},\ldots,a_{e_2}\}=\emptyset,$
\item{} there exist $j$, $2\leq j\leq 4$ and   a new partition $P_b$ of $I_b/J_b$ for which $T_1$ is preserved such that $a_{e_1}\in (f_j)$ and $m_{e_1}\in (u_j,u'_j)$.
\end{enumerate}
 \end{Lemma}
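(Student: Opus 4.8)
The plan is to imitate the proof of Lemma~\ref{ii}, the new complications being that $a_{e_1}\in E$ cannot be improved directly to a multiple of $f_1$ and that $m_{e_1}$ now meets one of the intervals $[f_k,c'_k]$. I would first fix $j\in\{2,3,4\}$ and $v\in\{u_j,u'_j\}$ with $v\mid m_{e_1}=bx_i$, and record the shape of the monomials at issue. Since $b\in(B\cap(f_1))\setminus(f_2,f_3,f_4)$ by condition~(3) of Proposition~\ref{ml}, we have $f_j\nmid b$; together with $f_j\mid v\mid bx_i$ this forces $i\in\supp f_j$, $f_j/x_i\mid b$, and $v=f_jx_a$ with $x_a\mid b$, $x_a\nmid f_j$. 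Writing $b=(f_j/x_i)x_ax_c$ gives $m_{e_1}=f_jx_ax_c$, so $f_j\mid m_{e_1}$; and $c'_j=f_jx_ax_\beta$ with $\beta\neq c$, since the disjointness of $[a_{e_1},m_{e_1}]$ and $[f_j,c'_j]$ excludes $m_{e_1}=c'_j$.

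I would then single out $g:=f_jx_c$. It lies in $(f_j)\subseteq I_b$ and, dividing $m_{e_1}\in I\setminus J$, it belongs to $B$; it is not in $E$ by minimality of the generators, and it differs from $b$ and from $u_j,u'_j$ because $c\neq a,\beta$. Suppose first that $g\notin\{u_2,u'_2,\ldots,u_4,u'_4\}$. If $g\notin\{a_1,\ldots,a_{e_1}\}$ and every path $g=a_{e_1+1},\ldots,a_{e_2}$ is disjoint from $\{a_1,\ldots,a_{e_1}\}$, then $g$ qualifies for statement~(1). Otherwise a rotation of the right endpoints of a suitable stretch of intervals, exactly as in the proof of Lemma~\ref{ii}, produces a partition of $I_b/J_b$ with $T_1$ unchanged in which $[g,m_{e_1}]=[f_jx_c,f_jx_ax_c]$ is an interval; since $g\in(f_j)$ and $v=f_jx_a\in\{u_j,u'_j\}$ divides $m_{e_1}$, in it the bad path reaching $m_{e_1}\in(u_j,u'_j)$ ends at a multiple of $f_j$, which is statement~(2) for this $j$.

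There remains the case $g=f_jx_c\in\{u_k,u'_k\}$ for some $k\neq j$. Now $m_{e_1}\in(u_k,u'_k)$ as well and $g\in(f_j)\cap(f_k)$, so $c'_k$ is a multiple of $g$. Here I would operate on $[f_k,c'_k]$, $[a_{e_1},m_{e_1}]$ and the interval carrying the remaining degree-$(d+1)$ divisors of $m_{e_1}$, by swaps of the kind used in Lemma~\ref{i0}, to obtain a new partition with the same $T_1$ in which $m_{e_1}$ is the right end of an interval started by a degree-$(d+1)$ multiple of $f_k$ --- statement~(2) with $k$ for $j$. If no such swap is available without disturbing the reachable set of $a_1$, then counting how many degree-$(d+1)$ divisors of $m_{e_1}$ can simultaneously lie in $[f_2,c'_2],[f_3,c'_3],[f_4,c'_4]$ and in $E$, as in the proof of Lemma~\ref{i}, exhibits one outside $\{b,u_2,u'_2,\ldots,u_4,u'_4\}\cup E$, which serves as $a_{e_1+1}$ for statement~(1).

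The main obstacle I foresee is the bookkeeping attached to every rearrangement: one must check that each modified interval keeps its endpoints in $I_b\setminus J_b$, that intervals beginning below degree $d+2$ still terminate in $C$, and --- most delicately --- that $T_1$, the set of monomials reachable from $a_1$ by non-bad paths, is genuinely unchanged, since an endpoint rotation alters $h$ on many elements simultaneously. The final case, where two of the intervals $[f_{\bullet},c'_{\bullet}]$ interact with $[a_{e_1},m_{e_1}]$ at once, is the most delicate, and it is exactly there that the hypothesis $a_{e_1}\in E$ is used, through the fact that no monomial of $E$ is a multiple of any $f_l$.
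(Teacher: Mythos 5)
Your first two paragraphs are essentially the paper's argument. From $v\in\{u_j,u'_j\}$ dividing $m_{e_1}=bx_i$ together with $f_j\nmid b$ you correctly extract the second degree-$(d+1)$ divisor $g=f_jx_c$ of $m_{e_1}$ lying in $B\cap(f_j)$ but outside $\{b,u_j,u'_j\}\cup E$ (this is the paper's $\tilde a$, and your structural analysis of $m_{e_1}=f_jx_ax_c$ is if anything more careful than the paper's), and the dichotomy ``either every path starting at $g$ avoids $\{a_1,\ldots,a_{e_1}\}$, giving (1), or rotate the right endpoints as in Lemma \ref{ii} so that $g$ becomes the new $a_{e_1}$, giving (2)'' is exactly the paper's conclusion of the proof.

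The gap is in your third paragraph, the residual case $g=w_{jk}\in\{u_k,u'_k\}$. The paper does not use Lemma-\ref{i0}-type swaps nor a count of divisors there; it simply iterates the same step one index at a time: since $f_k\mid g\mid m_{e_1}$, there is exactly one further degree-$(d+1)$ divisor $g'$ of $m_{e_1}$ in $B\cap(f_k)$; it satisfies $g'\notin\{u_k,u'_k\}$, for otherwise $m_{e_1}=c'_k$, contradicting the disjointness of $[a_{e_1},m_{e_1}]$ and $[f_k,c'_k]$; and $g'$ can lie in $\{u_l,u'_l\}$ only if $g'=w_{kl}$, in which case one repeats with the remaining index. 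After at most two more steps one obtains a divisor $a''\in B\cap(f_{j'})$ outside $\{b,u_2,u'_2,\ldots,u_4,u'_4\}\cup E$ with $m_{e_1}\in(u_{j'},u'_{j'})$, to which the dichotomy of your second paragraph applies verbatim. Your two proposed substitutes do not close this case: the Lemma-\ref{i0} swaps reshuffle which monomials play the roles of $u_k,u'_k$ (and may alter $c'_k$ and hence $T_1$), but they do not turn the last element of the given bad path into a multiple of $f_k$, which is what statement (2) demands; and the counting fallback fails because a degree-$(d+1)$ divisor of $m_{e_1}$ need not belong to $B$ at all (it must lie in $I$), so no pigeonhole on the $d+2$ divisors of $m_{e_1}$ guarantees one outside $\{b,u_2,u'_2,\ldots,u_4,u'_4\}\cup E$; the free divisor must be produced by the $m_{e_1}\neq c'_k$ argument above.
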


\begin{proof}
 Assume that  $m_{e_1}=x_ib$ for some $i$ and let us say $m_{e_1}\in (u'_2)$. Then $f_1x_i = u'_2=w_{12}$ and so there exists another divisor $\tilde a$ of $m_{e_1}$ from $B\cap (f_2)$ different of $w_{12}$. If ${\tilde a}\in [f_2,c'_2]$ then we get $m_{e_1}=c'_2$, which is false. If $\tilde a$ is not in $\{b,u_2,u'_2,\ldots,u_4,u'_4\}$ then set $a_{e_1+1}={\tilde a}$. If let us say  $\tilde a=u_3$ then ${\tilde a}=w_{23}$ and so $m_{e_1} $ is the least common multiple of $f_1,f_2,f_3$. Clearly, $m_{e_1}\not \in C_3$ because otherwise $b\in W$, which is false. Then $m_{e_1}=w_{13}\in C$ and we may find, let us say another divisor $\hat a$ of $m_{e_1}$ from $B\cap (f_3)$ which is not  $u'_3$ because $m_{e_1}\not =c'_3$. If $\hat a$ is in $\{u_4,u'_4\}$ then we may find an $ a'$ in $B\cap (f_4)$ which is not in $\{u_4,u'_4\}$ because $m_{e_1}\not =c'_4$. Thus in general we may find an  $ a''$ in $B\cap (f_j)$ for some $2\leq j\leq 4$ which is not in $\{b,u_2,u'_2,\ldots,u_4,u'_4\}$ and $m_{e_1}\in (u_j,u'_j)$. Set $a_{e_1+1}= a''$.  Let $ a_{e_1+1},\ldots, a_{e_2}$ be a  path. If we are not in the case (1) then  $a_p=a_v$ for $v\leq e_1$, $p>e_1$ and  change in $P_b$ the intervals $[a_k,m_k], v \leq k \leq p-1$ with the intervals $[a_v,m_{p-1}],[a_{k+1},m_k], v \leq k \leq p-2$.
    Note that the new $a_{e_1}$ is the old $a_{e_1+1}\in (f_j)$, that is  the case (2).
\hfill\ \end{proof}

\begin{Lemma} \label{ii2} Suppose that  $\sdepth_SI/J \leq d+1$. Then there exists  a  partition $P_b$ of $I_b/J_b$ such that  for any $a_1\in  B\setminus \{b,u_2,u'_2,\ldots,u_4,u'_4\}$ and any  bad path $a_1,\ldots,a_{e_1}$ , $m_j=h(a_j)$, $j\in [e_1]$ with
$m_{e_1}=bx_i$ the following statements holds:
\begin{enumerate}
\item{} $m_{e_1}\not \in (u_2,u'_2,\ldots,u_4,u'_4)$,

\item{} there exists $a_{e_1+1}\in B\setminus (\{b,u_2,u'_2,\ldots,u_4,u'_4\}\cup E)$ dividing $m_{e_1}$ such that every path $a_{e_1+1},\ldots,a_{e_2}$ satisfies
$\{ a_1,\ldots,a_{e_1}\}\cap \{a_{e_{1}+1},\ldots,a_{e_2}\}=\emptyset.$
\end{enumerate}
 \end{Lemma}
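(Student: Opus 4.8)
The plan is to \emph{build} the required partition $P_b$ by hand: start from one of sdepth $\ge d+2$ supplied by condition~(3) of Proposition~\ref{ml}, put it into the normal form of Lemma~\ref{i0}, and then modify it repeatedly until assertions (1) and (2) hold for every bad path ending at a monomial $bx_i$. As in the earlier reductions we may assume $E$ consists of monomials of degree $d+1$ only, so $E\subset B$, and that every interval of $P_b$ starting in degree $\le d+1$ ends in degree $d+2$. The first observation is that it suffices to produce such a $P_b$ for which \emph{no} bad path $a_1,\dots,a_{e_1}$ with $m_{e_1}=bx_i$ has $m_{e_1}\in(u_2,u'_2,\dots,u_4,u'_4)$: granting this, assertion~(1) holds for every such path, and assertion~(2) is then exactly the conclusion of Lemma~\ref{ii}, whose alternative $\sdepth_SI/J\ge d+2$ is excluded by hypothesis and whose output $a_{e_1+1}=f_1x_i\in(f_1)$ automatically lies outside $E$, a proper multiple of the minimal generator $f_1$ not being a minimal generator of $I$.

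So the core of the argument is to force $m_{e_1}=bx_i\notin(u_2,\dots,u'_4)$ by modifying $P_b$. Suppose $m_{e_1}\in(u_j,u'_j)$, say $m_{e_1}\in(u_j)$, for some $2\le j\le 4$. The decisive structural fact is that $b\in(B\cap(f_1))\setminus(f_2,f_3,f_4)$, so no $f_k$ with $k\ne 1$ divides $b$; inspecting the degree-$(d+1)$ divisors of $bx_i$ (each is a multiple of $f_1$, or a multiple of some $f_k$ with $\supp f_k\subset\supp(bx_i)$, or a multiple of no $f_k$) one finds that either the path can be rerouted through $(f_1)$ as in Lemma~\ref{ii}, or $w_{1j}=\lcm(f_1,f_j)$ has degree $d+1$, divides $m_{e_1}$, is distinct from $b$, and is one of the two degree-$(d+1)$ multiples of $f_j$ dividing $m_{e_1}$ --- the other of which can continue the path inside $(f_j)$ exactly as in the proof of Lemma~\ref{ii1} (possibly after first passing through $(f_3)$ or $(f_4)$). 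If $a_{e_1}\in E$ one invokes Lemma~\ref{ii1}: its case~(1) supplies a continuation $a_{e_1+1}\in B\setminus(\{b,u_2,\dots,u'_4\}\cup E)$ with the non-repetition property and replaces the offending path by one not ending in $E$, while its case~(2) returns a new partition, with $T_1$ unchanged, in which $a_{e_1}\in(f_j)$ and $m_{e_1}\in(u_j,u'_j)$. Hence in all cases we may take $a_{e_1}=x_\ell f_j$. Now perform the swap of Lemma~\ref{i0}: replace $[f_j,c'_j]$ and $[a_{e_1},m_{e_1}]$ by $[f_j,m_{e_1}]$ and $[u'_j,c'_j]$; this is legitimate because $u_j$ and $a_{e_1}$ are distinct multiples of $f_j$ dividing $m_{e_1}$, hence are precisely the two degree-$(d+1)$ members of $[f_j,m_{e_1}]$, which forces $u'_j\nmid m_{e_1}$ and makes the two partitions cover the same monomials. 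After the swap the interval starting at $f_j$ carries $w_{1j}\in W$ among its degree-$(d+1)$ members.

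Termination I would control with the monovariant $M(P_b)=\#\bigl(\{u_2,u'_2,\dots,u_4,u'_4\}\cap W\bigr)\in\{0,\dots,6\}$: by the analysis of Lemma~\ref{i0} it never decreases under these swaps and it strictly increases whenever the swap actually introduces a new element of $W$ among the $u$'s; in the residual branch, where $w_{1j}$ is already present among $\{u_j,u'_j\}$, a short direct check (again using $b\notin(f_2)\cup(f_3)\cup(f_4)$) gives $u'_j\nmid m_{e_1}$ and lets one reroute through the divisor of $m_{e_1}$ in $(f_j)$ different from $u_j,u'_j$, producing a strictly shorter bad path at the same value of $M$. Re-applying Lemma~\ref{i0} after each step (which does not lower $M$) keeps $P_b$ in normal form, and since $M$ is bounded and bad paths have bounded length the process halts at a $P_b$ of the required kind. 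I expect the main obstacle to be the bookkeeping rather than any single idea: one must check that every modification preserves the properties of Lemma~\ref{i0}, does not spoil assertion~(2) for the bad paths already under control, and that in each branch it either strictly increases $M$, strictly shortens the offending path at fixed $M$, or forces $\sdepth_SI/J\ge d+2$ against the hypothesis --- and the subcase $a_{e_1}\notin E$, settled by the divisor inspection of $bx_i$ together with $b\notin(f_2)\cup(f_3)\cup(f_4)$, is the delicate one, closely paralleling and slightly extending the argument of Lemma~\ref{ii1}.
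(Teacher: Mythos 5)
Your overall strategy coincides with the paper's: reduce everything to securing assertion (1) for every bad path (assertion (2) then follows from Lemma \ref{ii}, and $a_{e_1+1}=f_1x_i\notin E$ for exactly the reason you give), and repair a violation $m_{e_1}=bx_i\in(u_j,u'_j)$ by the swap of $[f_j,c'_j]$, $[a_{e_1},m_{e_1}]$ with $[f_j,m_{e_1}]$, $[u'_j,c'_j]$ after Lemma \ref{ii1} has arranged $a_{e_1}\in(f_j)$. The genuine gap is in your termination argument. First, the monovariant $M=\#\bigl(\{u_2,u'_2,\ldots,u_4,u'_4\}\cap W\bigr)$ is \emph{not} non-decreasing under this swap: the new set $B\cap[f_j,m_{e_1}]$ consists of $a_{e_1}$ together with the unique element of $\{u_j,u'_j\}$ dividing $m_{e_1}$ (both cannot divide it, else $m_{e_1}=c'_j$), so the displaced element of $\{u_j,u'_j\}$ --- which may perfectly well lie in $W$ --- is replaced by $a_{e_1}$, which in general does not. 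The monotonicity used in Lemma \ref{i0} holds only because there the incoming monomial is itself some $w_{ij}\in W$; it does not transfer to the present swap. Second, ``producing a strictly shorter bad path at the same value of $M$'' does not terminate the global process: there may be many bad paths, rerouting one controls none of the others, and nothing you say prevents the swap performed for an index $j'$ from creating a fresh violation at the index $j$ you had just treated.

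What is missing is the observation on which the paper's proof actually rests and which makes any monovariant unnecessary: after the swap, the new $c'_j$ equals $m_{e_1}=\lcm(b,f_j)$, a degree-$(d+2)$ common multiple of $b$ and $f_j$. Since \emph{every} squarefree monomial of degree $d+2$ in $(b)\cap(f_j)$ must equal this least common multiple, and since $c'_j$ is never a value of $h$, no bad path can ever again end in $(b)\cap(u_j,u'_j)$ --- the repair of index $j$ is permanent and is not disturbed by the later swaps for the other indices. Hence at most three swaps, one for each $j\in\{2,3,4\}$, produce the required partition, after which Lemma \ref{ii} yields (2). Without this point your iteration could in principle cycle, so the proof as written is incomplete.
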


\begin{proof}
   If for any $a_1\in  B\setminus \{b,u_2,u'_2,\ldots,u_4,u'_4\}$  there exist no bad path starting with $a_1$ there exists nothing to show. If for any such $a_1$ for each    bad path $a_1,\ldots,a_{e_1}$, $m_j=h(a_j)$, $j\in [e_1]$ with
$m_{e_1}\in (b)$ it holds $m_{e_1}\not \in (u_2,u'_2,\ldots,u_4,u'_4)$ then then to get (2) apply Lemma \ref{ii}.  Now suppose that there exists $a_1$ and a bad path $ a_1,\ldots,a_{e_1}$, $m_j=h(a_j)$, $j\in [e_1]$ with let us say
 $m_{e_1} \in (b)\cap (u_2)$.
 If we are not in case (2) then by Lemma \ref{ii1} we may change $P_b$ such that  $T_1$ is preserved,  $a_{e_1}\in (f_j)$ and $m_{e_1}\in (u_j,u'_j)$ for some $2\leq j\leq 4$. Assume that $j=2$ and so $m_{e_1}\in (w_{12})$, let us say $u'_2=w_{12}$.
 Replacing in $P_b$ the intervals $[f_2,c'_2]$, $[a_{e_1},m_{e_1}]$ with $[f_2,m_{e_1}]$, $[u_2,c'_2]$ the new $c'_2$ is the least common multiple of $b$ and $f_2$. Thus there exists no path $a_1,\ldots,a_{e_1}$ with $h(a_{e_1})\in (b)\cap (u_2,u'_2)$ because $h(a_{e_1})\not= c'_2$.
 Applying this procedure several time we  see that there exists no path $a_1,\ldots,a_{e_1}$ with $h(a_{e_1})\in (b)\cap (u_2,u'_2,\ldots,u_4,u'_4)$.
 Then we may apply Lemma \ref{ii} as above.
\hfill\ \end{proof}

\begin{Example}\label{b'}{\em
Let $n=5$,  $I=(x_1,\ldots,x_4)$,  $J=(x_2x_3x_4,x_2x_3x_5,x_2x_4x_5,x_3x_4x_5)$. So $$C=\{x_1x_2x_3,x_1x_2x_4,x_1x_2x_5,x_1x_3x_4,x_1x_3x_5,x_1x_4x_5\},$$ $$B=\{x_1x_2,x_1x_3,x_1x_4, x_1x_5,x_2x_3,x_2x_4,x_2x_5,x_3x_4,x_3x_5,x_4x_5\}.$$ Then $q=6$, $s=10=q+r$. Set $b=x_1x_5$, $a_1=x_2x_5$, $a_2=x_3x_5$, $a_4=x_4x_5$, $m_1=x_1x_2x_5$, $m_2=x_1x_3x_5$, $m_3=x_1x_4x_5$, $c'_2=x_1x_2x_3$, $c'_3=x_1x_3x_4$, $c'_4=x_1x_2x_4$. We have on $I_b/J_b$ the partition $P_b$ given by the intervals $[x_i,c'_i]$, $2\leq i\leq 4$ and $[a_j,m_j]$, $j\in [3]$. Clearly, $P_b$ has sdepth $3$ and $m_i=bx_i$, $2\leq i\leq 4$. Using the above lemma we change in $P_b$ the intervals $[a_{i-1},m_{i-1}]$, $[x_i,c'_i]$  with $[f_i,m_{i-1}]$, $[x_ix_5,c'_i]$ for $2\leq i\leq 4$.  Now we see that all $m$ from the new $U_1$ are not in $(b)\cap (u_2,u'_2,\ldots,u_4,u'_4)$.

We have  $\sdepth_SI/J\leq 2$. If $\sdepth_SI/J=3$ then there exists an interval $[x_1,c]$ with $c\in \{m_1,m_2,m_3\}$. If $c=m_i$ for some $2\leq i\leq 4$ then for any interval $[x_i,c']$ it holds $[x_1,c]\cap [x_i,c']=\{x_1x_i\}$, which is impossible. Also we have $\depth_SI/J\leq 2$ by Lemma \ref{vi}.
 }
\end{Example}

\begin{Remark} \label{ii'} {\em Suppose that $\sdepth_SI/J\leq d+1$. We change $P_b$ as in Lemma \ref{ii2}. Moreover assume that  there exists a bad path $a_{e_1+1},\ldots,a_{e_2}$. Using the same  lemma  we find  $a_{e_2+1}$  such that for each path
$a_{e_2+1},\ldots,a_{e_3}$
one has $\{ a_{e_1+1},\ldots,a_{e_2}\}\cap \{a_{e_{i_2}+1},\ldots,a_{e_3}\}=\emptyset.$ The same argument gives also
$\{ a_1,\ldots,a_{e_1}\}\cap \{a_{e_{i_2}+1},\ldots,a_{e_3}\}=\emptyset.$ Thus we may find some disjoint sets of elements
$\{ a_{e_j+1},\ldots,a_{e_{j+1}}\}$, $j\geq 0$, where $e_0=0$. It follows that after some steps we arrive in the case when for some $l$ there exist no bad path starting with $a_{l+1}$.}
\end{Remark}

\begin{Lemma} \label{iv} Suppose that $\sdepth_SI/J\leq d+1$ and ${\tilde P}_b$ is a partition of $I_b/J_b$ given by Lemma \ref{ii2}. Assume that  no  bad path starts with $a_1$,  $U_1\cap (u_2)\not =\emptyset$
and
there exists a divisor $\tilde a$ in $(B\cap (f_2))\setminus  \{u_2,u'_2,\ldots,u_4,u'_4\}$ of  a monomial $m\in U_1\cap ( u_2)$.
 Then there exist a partition $P_b$ and a (possible bad) path $a_1,\ldots,a_p$  such that   $T_{a_p}\cap \{a_1,\ldots,a_{p-1}\}=\emptyset$, $u_2$ and  $c'_i$, $i=3, 4$ are not changed in $P_b$,  no  bad path starts with $a_p$ and one of the following statements holds:
\begin{enumerate}
\item{} $U_{a_p}\cap (u_2)=\emptyset$,
\item{} $U_{a_p}\cap (u_2)\not =\emptyset$
and  there exists $b_2\in T_{a_p}\cap (f_2)$ with $h(b_2)\in (u_2)$,
\item{}  $U_{a_{p}}\cap (u_2)\not =\emptyset$ and every monomial of  $U_{a_{p}}\cap (u_2)$ has
 all its divisors from $B\cap (f_2)$  contained in  $\{u_2,u'_2,\ldots,u_4,u'_4\}$.

\end{enumerate}
Moreover, if
also $U_1\cap (u'_2)\not =\emptyset$, then we may choose  $P_b$ and the path $a_1,\ldots,a_p$  such that either $U_{a_p}\cap (u'_2) =\emptyset$ when there exists a bad path starting with a divisor from $B\setminus \{u_2,u'_2,\ldots,u_4,u'_4\}$ of $c'_2$, or otherwise $u'_2\in T_{a_p}$ and $c'_2=h(u'_2)$.
\end{Lemma}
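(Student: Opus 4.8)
The plan is to mimic, with $f_2$ singled out, the inductive reasoning of \cite[Lemmas 3.1, 3.2]{AP} together with the bad--path bookkeeping of Lemmas \ref{ii}, \ref{ii1}, \ref{ii2}, allowing on $P_b$ only the rotations of Lemma \ref{i0} for $i=2$ (which change $u'_2,c'_2$ but fix $u_2$) and the bad--path corrections of Lemma \ref{ii2} and Remark \ref{ii'}; a preliminary check shows that, started from an $a_1$ admitting no bad path, these moves never touch $u_2$, $c'_3$ or $c'_4$. If one of (1)--(3) already holds for $a_1$ with $P_b=\tilde P_b$, take $p=1$. Otherwise the failure of all three gives: $U_1\cap (u_2)\neq\emptyset$ (put $u_2=x_jf_2$); by the failure of (3) some $m\in U_1\cap (u_2)$ has a divisor $\tilde a\in (B\cap (f_2))\setminus\{u_2,u'_2,\dots,u_4,u'_4\}$, and a degree count (using $\deg m=\deg f_2+2$) forces $\tilde a=x_lf_2$, $m=x_jx_lf_2$ with $l\neq j$, $x_l\notin\supp f_2$; since no bad path starts with $a_1$, writing $m=h(b')$ with $b'\in T_1$ and extending through $\tilde a$ (legitimate as $\tilde a\,|\,m$, $\tilde a\notin\{b,u_2,\dots,u'_4\}$) gives $\tilde a\in T_1$; and the failure of (2) forces $b'\neq\tilde a$, i.e.\ $h(\tilde a)\notin (u_2)$.

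I then run the descent from the current endpoint, repeatedly choosing new path elements below the last image and stopping as soon as (1), (2) or (3) holds. Three situations are handled by tools already in place. If a simple path cannot be prolonged without revisiting an earlier $a_v$, reshuffling the intervals $[a_k,h(a_k)]$ as in the proofs of Lemmas \ref{ii}, \ref{ii1} produces a partition of $I/J$ of sdepth $d+2$, contradicting $\sdepth_SI/J\le d+1$; so the descent stays simple and $T_{a_p}$ stays disjoint from $\{a_1,\dots,a_{p-1}\}$. If a path below $h(\tilde a)$ becomes \emph{bad}, Lemma \ref{ii2} and Remark \ref{ii'} correct $P_b$ (affecting only $c'_2$) and pass to a fresh $a_p$ with no bad path and $T_{a_p}\cap\{a_1,\dots,a_{p-1}\}=\emptyset$, each correction strictly consuming elements of the finite set $B$. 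Finally, if none of (1)--(3) holds at the endpoint, the residual obstruction --- a reachable $\tilde a\in T_{a_p}\cap (f_2)$ with $h(\tilde a)\notin (u_2)$ dividing some $m\in U_{a_p}\cap (u_2)$ --- together with $[f_2,c'_2]$ and the intervals below it is reshuffled once more into an sdepth $d+2$ partition of $I/J$, which is impossible; hence after finitely many steps $a_p$ satisfies (1) when $U_{a_p}\cap (u_2)=\emptyset$, (2) when some $f_2$--multiple of $T_{a_p}$ maps into $(u_2)$, and (3) otherwise, every remaining monomial of $U_{a_p}\cap (u_2)$ having all its $B\cap (f_2)$--divisors in $\{u_2,u'_2,\dots,u_4,u'_4\}$.

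For the \emph{moreover} part I rerun the descent from the $a_p$ just obtained, with $u'_2$ in place of $u_2$, additionally allowing the Lemma \ref{i0}--rotation on $[f_2,c'_2]$ that turns $u'_2$ into a free generator with $h(u'_2)=c'_2$. The dichotomy is precisely whether a \emph{bad} path can start from some $B\setminus\{u_2,u'_2,\dots,u_4,u'_4\}$--divisor of $c'_2$: if it can, Lemma \ref{ii2} corrects $c'_2$ and removes it from the reachable images, giving $U_{a_p}\cap (u'_2)=\emptyset$; if it cannot, no correction is available and the rotation leaves $u'_2$ reachable with $c'_2=h(u'_2)$.

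I expect the main obstacle to be the simultaneous bookkeeping: verifying that every reshuffling really yields an sdepth $d+2$ partition of \emph{all} of $I/J$ in the revisiting and residual--obstruction configurations, that the Lemma \ref{ii2}--corrections needed here can always be performed disturbing only $c'_2$ (never $c'_3,c'_4$), and that the combined monovariant --- elements of $B$ consumed by bad--path corrections plus monomials of $U\cap (u_2)$ still carrying an outside $B\cap (f_2)$--divisor --- strictly decreases at each step, so the process terminates in one of the three alternatives. This amounts to reproducing, with $f_2$ singled out, the delicate case analysis of \cite[Lemmas 3.1, 3.2]{AP}.
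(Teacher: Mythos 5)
There is a genuine gap, and it sits exactly at the heart of the argument. Twice you claim a contradiction with $\sdepth_SI/J\leq d+1$: first when ``a simple path cannot be prolonged without revisiting an earlier $a_v$,'' and again for the ``residual obstruction'' when none of (1)--(3) holds at the endpoint. Neither reshuffling produces an sdepth $d+2$ partition of $I/J$. The switch that covers $f_1$ and upgrades a partition of $I_b/J_b$ to one of $I/J$ is available only in the very special situation of Lemmas \ref{ii} and \ref{ii1}, where the path is \emph{bad}, $m_{e_1}=bx_i\in(f_1)$, and the revisited element is precisely $f_1x_i$, so that $[f_1x_i,m_{e_1}]$ can be replaced by $[f_1,m_{e_1}]$. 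Here the paths are not bad, the monomials involved need not lie in $(f_1)$, and revisiting a generic $a_v$ gives you no interval starting at $f_1$. So neither of your two contradictions exists, and with them falls your justification that ``the descent stays simple'' and your termination argument.

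What actually closes these cases is a constructive rerouting, not a contradiction. If $\tilde a=a_v$ for some $v\leq e$ (or a path started at $\tilde a$ returns to $\{a_1,\ldots,a_e\}$), one cyclically reshuffles the intervals $[a_k,m_k]$ so that in the new partition $h(\tilde a)=m\in(u_2)$; since $\tilde a\in B\cap(f_2)$ this \emph{is} conclusion (2), with $b_2=\tilde a$ --- the revisiting configuration is the favourable one, not the contradictory one. If instead no path from $\tilde a$ meets $\{a_1,\ldots,a_e\}$, one restarts the root at $a_p=\tilde a$; then $m\notin U_{a_p}$, and iterating over the finitely many monomials $m'\in U_1\cap(u_2)$ carrying an outside divisor from $B\cap(f_2)$ terminates in (1), (2) or (3). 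Your treatment of bad paths via Lemma \ref{ii2} and Remark \ref{ii'}, and your dichotomy for the ``moreover'' part (bad path from a divisor of $c'_2$ versus the rotation making $h(u'_2)=c'_2$), are in line with the intended argument, but the missing rerouting step is what makes case (2) appear at all, and without it the proof does not go through.
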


\begin{proof}  Let $a_1,\ldots, a_{e}$ be a weak path,  $m_j=h(a_j)$, $j\in [e]$  such that $m_{e}=m$. If $a_e={\tilde a}$ then take $b_2=a_e$.
If $a_e\not={\tilde a}$ but there exists  $1\leq v<e$ such that
 $a_v={\tilde a}$.
 Then we may replace in $P_b$ the intervals $[a_{p},m_{p}], v \leq p \leq e$ with the intervals $[a_v,m_{e}],[a_{p+1},m_p], v \leq p < e$. The old $m_{e}$ becomes the new $m_v$, that is we reduce to the above case when $v=e$.

  Now assume that there exist no  such $v$   but there exists a path $a_{e+1}={\tilde a},\ldots,a_{l}$ such that $m_{l}=h(a_{l})\in (a_{v'})$ for some $v'\in [e]$. Then
 we replace in $P_b$ the intervals $[a_{j},m_{j}], v' \leq j \leq l$ with the intervals $[a_{v'},m_{l}]$, $[a_{j+1},m_j]$, $v' \leq j < l$. The new $m_{e+1}$ is the old $m_{e}$  but the new $a_{e+1}$ is the old $a_{e+1}$ and we may proceed  as above.

 Finally, suppose that no path starting with $a_{e+1}$
contains an element from \\
$\{a_1,\ldots,a_{e}\}$.  Taking $p=e+1$  we see that $m\not\in U_{a_p}\cap (u_2)$. If there exists another monomial $m'$ like $m$  then we repeat this procedure and after a while we may get (2), or (3).

Remains to see what happens when we have also $U_{a_p}\cap (u'_2) \not=\emptyset$.  Assume that there exist no bad path starting with a divisor of $c'_2$ from $B\setminus \{u_2,u'_2,\ldots,u_4,u'_4\}$. Then changing in $P_b$  the intervals $[b_2,h(b_2]$, $[f_2,c'_2]$ with $[f_2,h(b_2)]$, $[u'_2,c'_2]$
we see that there exists a path $a_1,\ldots,a_k$,  which is not bad, such that the  old $u'_2=a_k$.
We may complete   $T_{a_p}$ such that $a_k\in T_{a_p}$ and all divisors from $B$ of $c'_2$ which are not in $\{u_2,b_2,u_3,u_3',u_4,u'_4\}$ belong to $T_{a_p}$. For this aim we
   complete  $T_{a_p}$  with the elements connected by a path with $u'_2$ (see Example \ref{b2}).

    Next suppose that there exists a bad path  $a_k=u'_2,\ldots,a_l$ with $h(a_l)\in (b)$.
      We may assume that ${\tilde P}_b$ is given by Lemma \ref{ii2} and so there exist no multiple of $b$ in $U_1\cap (u_2,u'_2,u_3,u'_3,u_4,u'_4)$.  Note that $u''_2=b_2$  the new $u'_2$ considered above  has no multiple in $U_1\cap (b)$ because $b_2\in U_1$. By Lemma \ref{ii} there exists $a_{l+1}\in B\setminus \{b,u_2,u''_2,u_3,u'_3,u_4,u'_4\}$ dividing $h(a_l)$ such that every path $a_{l+1},\ldots,a_{l_1}$ satisfies
$\{ a_1,\ldots,a_{l}\}\cap \{a_{l+1},\ldots,a_{l_1}\}=\emptyset.$ Using Remark \ref{ii'} if necessary  we  have  $T_{a_{p'}}\cap \{a_1,\ldots,a_{p'-1}\}=\emptyset$ for some $p'>l$,  and the above    situation will not appear, that is the old $u'_2$ will not divide anymore a monomial from $U_{a_{p'}}\cap (u_2,u''_2,u_3,u'_3,u_4,u'_4)$. It is also possible that $u_2$ will not divide  a monomial from $U_{a_{p'}}$.
\hfill\ \end{proof}

The following bad example  is similar to \cite[Example 3.3]{AP}.

\begin{Example} \label{b2} {\em
Let $n=7$, $r=4$, $d=1$, $f_i=x_i$ for $i\in [4]$, $E=\{x_5x_6,x_5x_7\}$,  $I=(x_1,\ldots,x_4,E)$ and $$J=(x_1x_7,x_2x_4,x_2x_6,x_2x_7,x_3x_6,x_3x_7,x_4x_6,x_4x_7,x_3x_4x_5).$$  Then $B=\{x_1x_2,x_1x_3,x_1x_4,x_1x_5,x_1x_6,x_2x_3,x_2x_5,x_3x_4,x_3x_5,x_4x_5\}\cup E$ and $$C=\{x_1x_2x_3,x_1x_2x_5,x_1x_3x_4,x_1x_3x_5,x_1x_4x_5,x_1x_5x_6,x_2x_3x_5,x_5x_6x_7\}.$$ We have $q=8$ and $s=q+r=12$. Take $b=x_1x_6$ and \\ $I_b=(x_2,x_3,x_4,B\setminus\{b\},E)$, $J_b=I_b\cap J$. There exists a partition $P_b$ with sdepth $3$ on $I_b/J_b$ given by the intervals $[x_2,x_1x_2x_3]$, $[x_3,x_1x_3x_4]$, $[x_4,x_1x_4x_5]$, $[x_1x_5,x_1x_3x_5]$, $[x_2x_5,x_1x_2x_5]$, $[x_3x_5,x_2x_3x_5]$, $[x_5x_6,x_1x_5x_6]$,  $[x_5x_7,x_5x_6x_7]$. We have $c'_2=x_1x_2x_3$, $c'_3=x_1x_3x_4$, $c'_4=x_1x_4x_5$ and $u_2=x_1x_2$, $u'_2=x_2x_3$, $u_3=x_3x_4$, $u'_3=x_1x_3$, $u_4=x_1x_4$, $u'_4=x_4x_5$. Take $a_1=x_1x_5$, $a_2=x_3x_5$, $a_3=x_2x_5$. This gives a maximal weak path but not bad and defines
 $T_1=\{x_1x_5,x_3x_5,x_2x_5\}$, $U_1=\{x_1x_3x_5,x_2x_3x_5,x_1x_2x_5\}$.

  As in the above lemma we may change in $P_b$ the intervals $[x_2,x_1x_2x_3]$, $[x_2x_5,x_1x_2x_5]$ with $[x_2,x_1x_2x_5]$, $[x_2x_3,x_1x_2x_3]$. Note that the old $u'_2$ is not anymore in $[f_2,c'_2]$ and divides $x_2x_3x_5\in U_1$. Moreover, we have the path $\{a_1,x_1x_5,x_3x_5,x_2x_3\}$ and so we must take $T'_1=(T_1\cup \{x_2x_3\})\setminus \{x_2x_5\}$, $ U'_1=(U_1\cup \{x_1x_2x_3\})\setminus \{x_1x_2x_5\}$ as it is hinted in the above proof. The new $u_2,u'_2$ are all divisors of $x_1x_2x_5$ - the new $c'_2$, which are not in $T'_1$. However, this change of $P_b$ was not necessary because the new $u_2,u'_2,u'_3$ are all divisors from $B$ of the old $c'_2$ (see Remark \ref{M'} and Example \ref{bad}). The same thing is true for $c'_3$ and $c'_4$ has all divisors from $B$ among $\{a_1,u_4,u'_4\}$.}
\end{Example}

\begin{Remark}\label{iv'} {\em Suppose that in Lemma \ref{iv} the partition ${\tilde P}_b$  satisfies also the property (1)  mentioned in Lemma \ref{i0}. If ${\tilde a}=w_{2i}$ for some $i=3,4$ then $m\not \in (u_i,u'_i)$. In particular $b_2\not =w_{23},w_{24}$.}
\end{Remark}

\begin{Lemma}\label{iv''} Assume that $U_{a_{p}}\cap (u_2)\not =\emptyset$ and a monomial $m$ of  $U_{a_{p}}\cap (u_2)$ has
 all its divisors from $B\cap (f_2)$  contained in  $\{u_2,u'_2,\ldots,u_4,u'_4\}$. Then one of the following statements holds:
\begin{enumerate}
\item{} $m$ has a divisor ${\tilde a}_i \in (B\cap (f_i))\setminus \{u_2,u'_2,\ldots,u_4,u'_4\}$ for some $i=3,4$,
\item{} $m\in C_3\setminus W$ and it  is the least common multiple of $f_2,f_3,f_4$.
\end{enumerate}
\end{Lemma}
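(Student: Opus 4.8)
I would prove this by assuming that alternative (1) fails and deriving (2). The whole argument is driven by one observation: since $m\in U_{a_p}$ we have $m=h(b')$ for some $b'\in T_{a_p}$, and $[b',m]$ is an interval of the partition $P_b$; so any time the hypotheses force a degree-$(d+1)$ multiple of some $f_i$ ($i\ge 2$) dividing $m$ to coincide with a monomial $u'_j$, we get $c'_j=m$, and then $[f_j,c'_j]$ and $[b',m]$ are two intervals of $P_b$ both containing $m$ -- impossible. With this in hand, first I would set $u_2=x_pf_2$ and write $m=x_qu_2=x_px_qf_2$ with $x_q\notin\{x_p\}\cup\supp f_2$. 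The monomial $x_qf_2$ divides $m$ and lies in $B\cap(f_2)$, so by the hypothesis it is one of $u_2,u'_2,\ldots,u_4,u'_4$; it is not $u_2$, and it cannot be $u'_2$ (that would give $c'_2=m$). Relabelling within $\{f_3,f_4\}$ and within each pair $\{u_j,u'_j\}$, assume $x_qf_2=u_3$; then $u_3$ is a common multiple of $f_2,f_3$ of degree $d+1$, so $u_3=w_{23}=\lcm(f_2,f_3)$, $\supp f_3\setminus\supp f_2=\{x_q\}$, and $m=x_pw_{23}$ is a multiple of $f_3$.

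Next, with $z_0$ the unique variable of $\supp f_2\setminus\supp f_3$, one computes $w_{23}/z_0=f_3$, so $m/z_0=x_pf_3$ is a degree-$(d+1)$ multiple of $f_3$ dividing $m$; hence $x_pf_3\in B\cap(f_3)$. If $x_pf_3\notin\{u_2,u'_2,\ldots,u_4,u'_4\}$ it witnesses (1), contrary to assumption. So it is one of the six monomials; it cannot be $u_2$ (forcing $f_2=f_3$), $u'_2$ (forcing $u'_2=w_{23}=u_3$, contradicting disjointness), $u_3$ (forcing $x_p=x_q$, hence $f_2=f_3$), or $u'_3$ (forcing $c'_3=\lcm(w_{23},x_pf_3)=m$). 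Relabelling $u_4\leftrightarrow u'_4$ if needed, $x_pf_3=u_4$, so $u_4=w_{34}=\lcm(f_3,f_4)$, $\supp f_4\setminus\supp f_3=\{x_p\}$, and $m$ is a multiple of $f_4$. Thus $\omega_1:=\lcm(f_2,f_3,f_4)$ divides $m$; since $w_{23}\mid\omega_1$ and $\deg w_{23}=d+1<d+2=\deg m$, either $m=\omega_1$ or $\omega_1=w_{23}$, and the latter would force $w_{34}=w_{23}$, i.e. $x_pf_3=x_qf_2$, impossible. Hence $m=\omega_1=\lcm(f_2,f_3,f_4)$, the second assertion of (2).

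It remains to show $m\in C_3\setminus W$. We have $m\in C\cap(f_1,\ldots,f_4)$. Any $w\in(B\setminus E)$ dividing $m$ has degree $d+1$ and is a multiple of some $f_i$. For $i\in\{2,3,4\}$ the hypothesis and the failure of (1) put $w\in\{u_2,u'_2,\ldots,u_4,u'_4\}$; but the $u'_j$ cannot divide $m$ (a degree-$(d+1)$ multiple of $f_j$ dividing $m$ must be $u_j$ or one of the two monomials found above, and $u'_j$ cannot lie in a different interval $[f_k,c'_k]$), so $w\in\{u_2,u_3,u_4\}$. Here $u_3=w_{23}$, $u_4=w_{34}$, and using the support identities -- which also give $\supp f_4=\{x_p\}\cup(\supp f_2\cap\supp f_3)$ once one checks $x_q\notin\supp f_4$ -- also $u_2=w_{24}$, so $w\in(W)$. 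The check that $x_q\notin\supp f_4$, and the exclusion of a divisor $w$ that is a multiple of $f_1$ (which forces $f_1\mid m$), both run the same way: from such a configuration one exhibits a degree-$(d+1)$ divisor of $m$ that is a multiple of $f_3$ or of $f_4$ yet cannot be any of $u_2,u'_2,\ldots,u_4,u'_4$ (the $u'_j$-possibilities being ruled out because they again force $c'_j=m$), or else one is forced into $\supp f_1\in\{\supp f_2,\supp f_3,\supp f_4\}$, absurd -- contradicting the failure of (1). This gives $m\in C_3$, and since $w_{23},w_{24},w_{34}$ all have degree $d+1<\deg m$ while $m=w_{1j}$ would give $f_1\mid m$ (just excluded), also $m\notin W$. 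Hence $m\in C_3\setminus W$, proving (2).

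The routine part is the reduction $m=\lcm(f_2,f_3,f_4)$ in the first two paragraphs: given the ``$c'_j=m$'' device it is essentially mechanical case-checking. The main obstacle is the last paragraph -- pinning every relevant degree-$(d+1)$ divisor of $m$ to a specific $w_{ij}$, and in particular showing $f_1\nmid m$. The delicate point there is the combinatorics of how $w_{12},w_{13},w_{14}$ must fit among the at most two degree-$(d+1)$ multiples of $f_1$ that can divide $m$ when $f_1\mid m$: two of the $w_{1j}$ are then forced to coincide, a support computation makes the third have degree $\ge d+2$, and this together with the exclusion of $m\in W$ closes the argument.
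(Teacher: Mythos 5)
Your first two paragraphs are correct and take essentially the same route as the paper's (much terser) proof: the second divisor of $m$ from $B\cap(f_2)$ cannot stay in $\{u_2,u'_2\}$ (that forces $m=c'_2$), so after relabelling it is $u_3=w_{23}$; the second divisor from $B\cap(f_3)$ is then forced to be $u_4=w_{34}$ unless (1) holds; and $m=\lcm(f_2,f_3,f_4)$ follows. (One small slip: to exclude $x_pf_3=u_3$ you should note that $x_pf_3=z_0f_3$ gives $x_p=z_0\in\supp f_2$, contradicting squarefreeness of $u_2=x_pf_2$; your ``forcing $x_p=x_q$'' is not what happens, though the conclusion stands.)

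The genuine gap is in your third paragraph, exactly at the point you flag as delicate: the exclusion of $f_1\mid m$. As written the argument is circular — you deduce $m\notin W$ from ``$m=w_{1j}$ would give $f_1\mid m$ (just excluded)'', while the exclusion of $f_1\mid m$ is said to use ``the exclusion of $m\in W$''. Worse, the contradiction you claim does not materialize. From $\deg w_{23}=\deg w_{24}=\deg w_{34}=d+1$ and $\deg m=d+2$ one gets the normal form $f_i=vx_i$ for $i=2,3,4$ with $\deg v=d-1$, so $m=vx_2x_3x_4$, $u_2=w_{24}$, $u_3=w_{23}$, $u_4=w_{34}$. Now take $f_1=(v/z)x_2x_3$ for some $z\in\supp v$ (possible whenever $d\geq 2$). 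Then $f_1\mid m$, $m=w_{14}\in W$, $w_{12}=w_{13}=w_{23}$, and the two degree-$(d+1)$ multiples of $f_1$ dividing $m$ are $zf_1=w_{23}=u_3$ and $g=x_4f_1$; here $g$ is divisible by none of $f_2,f_3,f_4$, hence $g\notin W$ and $g\notin\{u_2,u'_2,\ldots,u_4,u'_4\}$, and $\supp f_1$ differs from each $\supp f_j$. Every divisor of $m$ from $B\cap(f_i)$, $i=2,3,4$, lies in $\{u_2,u_3,u_4\}$, so (1) fails; yet $g\in B\setminus E$ divides $m$ and is not in $W$, so $m\notin C_3$ and moreover $m\in W$, so (2) fails too. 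Thus your claimed dichotomy (``a divisor contradicting the failure of (1), or $\supp f_1\in\{\supp f_2,\supp f_3,\supp f_4\}$'') is false, and the step does not close. To be fair, the paper's own proof is silent on this very point (it tacitly assumes every divisor of $m$ from $B\setminus E$ is a multiple of some $f_i$ with $i\geq 2$, and that $m\in W$ forces $m=w_{24}$); you have correctly located the hard spot, but a complete argument must either rule this configuration out using information external to the lemma's hypotheses or add it as a further alternative in the conclusion.
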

\begin{proof}  There exists a divisor ${\hat a}\not\in \{u_2,u'_2\}$ of $m$ from $B\cap (f_2)$, otherwise $m=c'_2$. By our assumption we have let us say ${\hat a}=u_3=w_{23}$.
Then there exists a divisor $a'\not =u_3$ from $B\cap (f_3)$. If $a'\not \in \{u_2,u'_2,\ldots,u_4,u'_4\}$ then we are in (1). Otherwise, $a'=u_4=w_{34}$.
 If $m \in W$ then $m=w_{24}\in C_2$ and there exists a divisor of $m$ from
$ (B\cap (f_4))\setminus \{u_2,u'_2,\ldots,u_4,u'_4\}$, that is (1) holds. Thus we may suppose that   $m\not\in W$ and all its divisors from $B\setminus E$ are $w_{23},w_{34},w_{24}$, that is  $m$ is in (2).
\hfill\ \end{proof}

\begin{Remark} {\em Assume that in the above lemma $m$ has the form given in Example \ref{exe}. Then $m\not\in \{c'_2,c'_3,c'_4\}$ and so necessarily $w_{12},w_{13},w_{14}$ are divisors of $m$ from $B\setminus\{u_2,u'_2,\ldots,u_4,u'_4\}$, that is $m$ is in case (1).}
\end{Remark}
\begin{Lemma} \label{v}  Suppose that $\sdepth_SI/J\leq d+1$ and ${\tilde P}_b$ is a partition of $I_b/J_b$ given by Lemma \ref{ii2}. Assume that   ${\tilde P}_b$  satisfies also the properties  mentioned in Lemma \ref{i0}
 and no  bad path starts with $a_1$.
 Then there exist a partition $P_b$ which satisfies the properties mentioned in Lemma \ref{i0} and a (possible bad) path $a_1,\ldots,a_p$  such that $T_{a_p}\cap \{a_1,\ldots,a_{p-1}\}=\emptyset$,  no  bad path starts with $a_p$,  and for every $i=2,3,4$
 such that   there exists a divisor ${\tilde a}_i$ in $(B\cap (f_i))\setminus  \{u_2,u'_2,\ldots,u_4,u'_4\}$ of  a monomial from $ U_1\cap ( u_i)$,
   one of the following statements holds:
\begin{enumerate}
\item{} $U_{a_p}\cap (u_i)=\emptyset$,
\item{} $U_{a_p}\cap (u_i)\not =\emptyset$
and  there exists $b_i\in T_{a_p}\cap (f_i)$ with $h(b_i)\in (u_i)$,
\item{}  $U_{a_{p}}\cap (u_i)\not =\emptyset$ and every monomial of  $U_{a_{p}}\cap (u_i)$ has
 all its divisors from $B\cap (f_i)$  contained in  $\{u_2,u'_2,\ldots,u_4,u'_4\}$.
\end{enumerate}

Moreover, these possible $b_i$ are different and if for some $i=2,3,4$ it holds also
 $U_1\cap (u'_i)\not =\emptyset$, then we may choose  $P_b$ and the path $a_1,\ldots,a_p$  such that either $U_{a_p}\cap (u'_i) =\emptyset$ when there exists a bad path starting with a divisor from $B\setminus \{u_2,u'_2,\ldots,u_4,u'_4\}$ of $c'_i$, or otherwise $u'_i\in T_{a_p}$ and $h(u'_i)$ is the old $c'_i$.
\end{Lemma}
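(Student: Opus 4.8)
The plan is to derive Lemma \ref{v} by running the argument of Lemma \ref{iv} once for each index $i\in\{2,3,4\}$ in turn, every time prolonging the path produced at the preceding stage and keeping the partition modifications from colliding. For an index $i$ that admits no divisor $\tilde a_i\in(B\cap(f_i))\setminus\{u_2,u'_2,\ldots,u_4,u'_4\}$ of a monomial of $U_1\cap(u_i)$ there is nothing to prove for that $i$, so I only work through the (at most three) indices for which such a $\tilde a_i$ exists. I would start with $i=2$ and apply Lemma \ref{iv} verbatim: it returns a partition $P_b$ which still satisfies the properties of Lemma \ref{i0} --- the interval swaps in the proof of Lemma \ref{iv} only shuffle intervals attached to $f_2$ and never touch $c'_3,c'_4$ nor the membership of any $u$ in $W$ --- together with a possibly bad path $a_1,\ldots,a_{p_2}$ with $T_{a_{p_2}}\cap\{a_1,\ldots,a_{p_2-1}\}=\emptyset$, no bad path starting at $a_{p_2}$, one of (1)--(3) realized for $i=2$, and the ``moreover'' alternative for $u'_2$ arranged.

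Next I would continue the path from $a_{p_2}$ and apply Lemma \ref{iv} again with $i=3$, treating $a_{p_2}$ as the new starting vertex; this yields an extension $a_1,\ldots,a_{p_3}$ and a partition with the analogous properties for $i=3$, and I would then repeat once more with $i=4$. The crucial check is that each new stage leaves the conclusions of the earlier stages intact. First, the partition moves of the $i=k$ stage keep $u_k$ and every $c'_j$ with $j\ne k$ fixed (this is exactly the preservation clause of Lemma \ref{iv} after relabelling), hence the $c'_j,u_j,u'_j$ frozen at the $j$-th stage survive; since items (2), (3) and the $u'_j$-alternative are phrased solely through these data and through the divisors of the fixed $c'_j$, they persist. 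Second, since $a_{p_k}$ is joined to $a_{p_j}$ by the tail of the path and $T_{a_{p_k}}\cap\{a_1,\ldots,a_{p_k-1}\}=\emptyset$, each not-bad path out of $a_{p_k}$ prolongs, without repetition, a not-bad path out of $a_{p_j}$; therefore $T_{a_{p_k}}\subseteq T_{a_{p_j}}$ and $U_{a_{p_k}}\subseteq U_{a_{p_j}}$, so statement (1) for $i=j$ is preserved, and in cases (2), (3) for $i=j$ the set $U_{a_{p_k}}\cap(u_j)$ sits inside $U_{a_{p_j}}\cap(u_j)$ and keeps the prescribed form (becoming $\emptyset$ only puts us back in case (1)).

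Two further points finish the argument. To have the hypothesis available for the $(k+1)$-st stage when the $k$-th stage ends in case (3), I would invoke Lemma \ref{iv''}: a problematic $m\in U_{a_{p_k}}\cap(u_k)$ either has a divisor in $(B\cap(f_{k+1}))\setminus\{u_2,u'_2,\ldots,u_4,u'_4\}$, which is the required input, or $m\in C_3\setminus W$ is the least common multiple $\omega_1$ of $f_2,f_3,f_4$, which is excluded by hypothesis (4) of Proposition \ref{ml}. The ``moreover'' part for each $u'_i$ is handled exactly as in the final paragraph of the proof of Lemma \ref{iv}: either a bad path starts at a divisor of $c'_i$ from $B\setminus\{u_2,u'_2,\ldots,u_4,u'_4\}$, and Lemmas \ref{ii}, \ref{ii2} together with Remark \ref{ii'} drive $U_{a_p}\cap(u'_i)$ to $\emptyset$, or we swap $[f_i,c'_i]$, $[b_i,h(b_i)]$ for $[f_i,h(b_i)]$, $[u'_i,c'_i]$ and enlarge $T_{a_p}$ by the elements joined to $u'_i$ by a path, making $u'_i\in T_{a_p}$ with $h(u'_i)$ the old $c'_i$. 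Finally, the $b_i$ are pairwise distinct: if $b_i=b_j$ with $i\ne j$ then this common monomial lies in $(f_i)\cap(f_j)$, has degree $d+1$, hence equals $w_{ij}\in B$ yet is outside $\{u_2,u'_2,\ldots,u_4,u'_4\}$, while $h(b_i)=h(b_j)\in(u_i)\cap(u_j)$, contradicting the relevant case of Lemma \ref{i0} (according to which of $u_i,u'_i,u_j,u'_j$ lie in $W$) together with the injectivity of $h$.

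The step I expect to be the main obstacle is exactly this transfer of conclusions across stages: one must make sure no stage resurrects a bad path that an earlier stage had eliminated and no stage disturbs the $c'_j,u_j$ locked in earlier, and the inclusions $T_{a_{p_k}}\subseteq T_{a_{p_j}}$ --- which are what actually carry conditions (1)--(3) forward --- need a careful justification when the path has picked up a bad prefix; everything else is a local reuse of Lemmas \ref{iv}, \ref{iv''} and \ref{i0}.
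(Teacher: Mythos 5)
Your overall strategy is the paper's: run Lemma \ref{iv} once for each $i=2,3,4$, prolonging the path each time, and use Remark \ref{iv'} (via Lemma \ref{i0}) to see that the resulting $b_i$ are distinct. But there is a genuine gap exactly at the point you yourself flag as ``the main obstacle'': your claim that the inclusion $T_{a_{p_k}}\subseteq T_{a_{p_j}}$ carries the earlier conclusions forward does not work. First, that inclusion is not automatic: a not-bad path out of $a_{p_k}$ prepended with the segment $a_{p_j},\ldots,a_{p_k}$ need not be a not-bad path out of $a_{p_j}$ when that segment is bad, and moreover each stage changes the partition, hence changes $h$ and the whole path structure, so the sets $T_{a'}$, $U_{a'}$ are recomputed rather than merely restricted. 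Second, and more importantly, even where the inclusion holds it points the wrong way for condition (2): that condition asserts the \emph{existence} of a witness $b_j\in T_{a_p}\cap(f_j)$ with $h(b_j)\in(u_j)$, and shrinking $T_{a_p}$ can delete the witness $b_j$ found at stage $j$ while leaving $U_{a_p}\cap(u_j)$ nonempty, so one is in neither case (1) nor case (2) for $i=j$. The paper states this explicitly (``the corresponding statements of $j_2)$, $j_2')$, $j_2'')$ do not hold anymore because we could have $b_2\not\in T_{a_{p_2}}$'') and repairs it by alternating: re-apply Lemma \ref{iv} for $c'_2$, then possibly again for $c'_3$, and so on, arguing that this back-and-forth terminates. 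Your proposal is missing this repair loop entirely, and the persistence argument you substitute for it is false as stated.

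A secondary point: your appeal to Lemma \ref{iv''} together with ``hypothesis (4) of Proposition \ref{ml}'' to rule out the alternative $m\in C_3\setminus W$ misreads that hypothesis, which only excludes $\omega_1\in(C_3\setminus W)\cap(E)$, not $\omega_1\in C_3\setminus W$; indeed the subsequent proof of Proposition \ref{ml} must still treat the subcase $m=\omega_1\in C_3$ (subcase $7')$ there). This exclusion is in any event not needed for Lemma \ref{v}, since case (3) is retained as an admissible outcome and the hypothesis for stage $i$ is the assumed existence of ${\tilde a}_i$, not a consequence of the previous stage ending in case (3).
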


\begin{proof}
Suppose that  there exists a divisor ${\tilde a}_2$ in $(B\cap (f_2))\setminus  \{u_2,u'_2,\ldots,u_4,u'_4\}$ of  a monomial from $ U_1\cap ( u_2)$ with respect of ${\tilde P}_b$.  Using Lemma \ref{iv}  we find  a partition $P_b$ and  a (possible bad) path $a_1,\ldots,a_{p_1}$  such that    $T_{a_{p_1}}\cap \{a_1,\ldots,a_{p_1-1}\}=\emptyset$,  no  bad path starts with $a_{p_1}$ and one of the following statements holds:

$j_2)$ $U_{a_{p_1}}\cap (u_2)=\emptyset$,

 $j'_2)$  $U_{a_{p_1}}\cap (u_2)\not =\emptyset$  and  there exists $b_2\in T_{a_{p_1}}\cap (f_2)$ with $h(b_2)\in (u_2)$,

 $j''_2)$ $U_{a_{p_1}}\cap (u_2)\not =\emptyset$ and every monomial of  $U_{a_{p_1}}\cap (u_2)$ has
 all its divisors from $B\cap (f_2)$  contained in  $\{u_2,u'_2,\ldots,u_4,u'_4\}$.

 Moreover, if
also $U_1\cap (u'_2)\not =\emptyset$, then we may choose  $P_b$ and the path $a_1,\ldots,a_{p_1}$  such that either $U_{a_{p_1}}\cap (u'_2) =\emptyset$ when there exists a bad path starting with a divisor from $B\setminus \{u_2,u'_2,\ldots,u_4,u'_4\}$ of $c'_2$, or otherwise $u'_2\in T_{a_{p_1}}$ and $c'_2=h(u'_2)$.
After a small change  we may suppose that $P_b$  satisfies the properties of Lemma \ref{i0} and so $b_2\not = w_{23},w_{24}$.

If $U_{a_{p_1}}\cap (u_3,u_4)=\emptyset$ then we are done.
Now assume that there exists a  divisor ${\tilde a}_3$  in $B\cap (f_3)\setminus  \{u_2,u'_2,\ldots,u_4,u'_4\}$ of a monomial $m\in U_{a_{p_1}}\cap (u_3)$,
 let us say $m=m_e$ for some path $a_{p_1},\ldots, a_e$. If $a_e={\tilde a}_3$, or $a_e\not ={\tilde a}_3$ but there exists a path $a_{e+1}={\tilde a}_3,\ldots,a_k$ with $a_k=a_v$ for some $v\leq e$ then  we change $P_b$ as in the proof of  Lemma \ref{iv} to replace $c'_3$ by $m$. Clearly, $c'_2,c'_3$ satisfy (2) for $i=2,3$.  Otherwise, if $a_e\not ={\tilde a}_3$ but there exists no path $a_{e+1}={\tilde a}_3,\ldots,a_k$ with $a_k=a_v$ for some $v\leq e$, apply again the quoted lemma with $c'_3$. We get a  (possible bad) path $a_{p_1},\ldots,a_{p_2}$ with $p_2>p_1$  such that   $T_{a_{p_2}}\cap \{a_1,\ldots,a_{p_2-1}\}=\emptyset$,  no  bad path starts with $a_{p_2}$ and one of the following statements holds:

$j_3)$ $U_{a_{p_2}}\cap (u_3)=\emptyset$,

 $j_3')$ $U_{a_{p_2}}\cap (u_3)\not =\emptyset$  and  there exists $b_3\in T_{a_{p_2}}\cap (f_3)$ with $h(b_3)\in (u_3)$,

$j_3'')$ $U_{a_{p_2}}\cap (u_3)\not =\emptyset$ and every monomial $m\in U_{a_{p_2}}\cap (u_3)$ has
 all its divisors from $B\cap (f_3)$  contained in  $\{u_2,u'_2,\ldots,u_4,u'_4\}$.

If we also have $U_1\cap (u'_3)\not =\emptyset$ then it holds a similar statement as in case $i=2$. Note that $b_2\not =b_3$ since $b_2\not =w_{23}$ by Remark \ref{iv'} and so $h(b_2)\not =h(b_3)$.
Very likely meanwhile the corresponding  statements of $j_2)$, $j_2')$, $j_2'')$ do not hold anymore because we could have $b_2\not\in T_{a_{p_2}}$. If there exists another ${\tilde a}_2$  we apply again Lemma \ref{iv} with $c'_2$ obtaining a new partition $P_b$ and a path $a_{p_2},\ldots,a_{p_3}$ for which this situation is repaired. If  now $c'_3$ does not satisfy  (2) then  the procedure could continue with $c'_3$ and so on. However, after a while we must get a path $a_1,\ldots,a_{p_{23}}$ such that   $T_{a_{p_{23}}}\cap \{a_1,\ldots,a_{p_{23}-1}\}=\emptyset$,  no  bad path starts with $a_{p_{23}}$ and for every $i=2,3$ one of the following statements holds:

$j_{23}) $ $U_{a_{p_{23}}}\cap (u_i)=\emptyset$,

 $j_{23}')$
   $U_{a_{p_{23}}}\cap (u_i)\not =\emptyset$  there exist   $b_i\in T_{a_{p_{23}}}\cap (f_i)$ with $h(b_i)\in (u_i)$,

$j_{23}'')$ $U_{a_{p_{23}}}\cap (u_i)\not =\emptyset$ and every monomial $m\in U_{a_{p_{23}}}\cap (u_i)$ has
 all its divisors from $B\cap (f_i)$  contained in  $\{u_2,u'_2,\ldots,u_4,u'_4\}$.

 We end  the proof applying  the same procedure  with $c'_4$ together  with $c'_2$, $c'_3$ and if necessary Lemma \ref{i0}.
\hfill\ \end{proof}

\begin{Remark}\label{v'} {\em  Using the properties (2), (3) mentioned  in Lemma \ref{i0} we may have  $b_i=w_{1i}$, for some $2\leq i\leq 4$  only if $u_i,u'_i\in W$. Thus, let us say  $b_2=w_{12}$  only if $\{u_2,u'_2\}=\{w_{23},w_{24}\}$.  Then $\{u_i,u'_i\}\not \subset W$ for $i=3,4$ and so $b_3\not = w_{13}$,  $b_4\not = w_{14}$,
in case $b_3,b_4$ are given by Lemma \ref{v}. Therefore at most one from $b_i$ could be  $w_{1i}$.}
\end{Remark}

The idea of the proof of Proposition \ref{ml}
fails in a special case  hinted by Example \ref{b'}. This case is solved directly by the following lemma.

\begin{Lemma}\label{vi} Suppose that   $b=x_jf_1$ and  $(B\setminus E)\subset W\cup \{x_jf_1,x_jf_2,x_jf_3,x_jf_4\}$ for some $j\not\in  \supp f_1$. Then $\depth_SI/J\leq d+1$.
\end{Lemma}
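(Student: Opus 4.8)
\emph{Sketch of the intended proof.} The plan is to run the exact sequence
$$0\to I/J\to S/J\to S/I\to 0$$
through the Depth Lemma, as in the verification carried out in Example \ref{b'}. By \cite[Lemma 1.6]{PZ} we may assume $E$ is generated in degree $d+1$; since $\depth_SI/J\ge d$ always by \cite[Proposition 3.1]{HVZ}, it suffices to rule out $\depth_SI/J\ge d+2$. If the latter held, the Depth Lemma applied to the sequence above would give
$$\depth_SS/I\ \ge\ \min\bigl(\depth_SI/J-1,\ \depth_SS/J\bigr)\ \ge\ \min\bigl(d+1,\ \depth_SS/J\bigr),$$
so the lemma follows once we prove
$$\text{(i)}\quad\depth_SS/I\le d\qquad\text{and}\qquad\text{(ii)}\quad\depth_SS/J\ge d+1 .$$

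For (ii) the point is that $b=x_jf_1\in B$, so the degree-$(d+1)$ level of $I$ is not entirely contained in $J$; together with the fact that $J$ is generated in degrees $\ge d+1$, one analyses $S/J$ along the variable $x_j$ through the sequence $0\to S/(J:x_j)\xrightarrow{\,x_j\,}S/J\to S/(J,x_j)\to 0$ and bounds the depths of the two outer terms, the hypothesis $(B\setminus E)\subseteq W\cup\{x_jf_1,\dots,x_jf_4\}$ being exactly what limits how deep $J$ reaches. For (i), in the main case the same hypothesis pins down the mutual position of $f_1,\dots,f_4$ so tightly --- every ``$x_l$-neighbour'' $x_lf_i$ of a generator is forced into $W\cup\{x_jf_{i'}\}$, and there are only four generators --- that $\height I\ge n-d$, that is $\dim S/I\le d$, whence $\depth_SS/I\le d$.

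Condition (i) can, however, genuinely fail: this happens for instance when $f_1,\dots,f_4$ all lie in $(v)$ for a monomial $v$ of degree $d-1$, and more generally whenever $I$ is too ``thin'', so that $\depth_SS/I>d$. In every such case the hypothesis forces $J$ to be so large near degree $d+1$ that $I/J$ is isomorphic to a module over $S/L$ with $L$ generated by few variables --- in the cleanest instance $J=I\cap(x_k,x_l)$ and $I/J\cong\overline I$ over $S/(x_k,x_l)$ --- whose depth is visibly at most $d+1$, and one concludes directly. Carrying out this dichotomy, together with the sub-case singled out in the proof of Theorem \ref{m1}, is the step I expect to be the main obstacle.

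With (i) and (ii) in hand the argument closes at once: they give $\depth_SS/I\ge d+1>d\ge\depth_SS/I$, a contradiction, hence $\depth_SI/J\le d+1$.
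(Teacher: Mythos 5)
Your overall plan --- forcing a contradiction from $\depth_SI/J\ge d+2$ via the sequence $0\to I/J\to S/J\to S/I\to 0$ by proving (i) $\depth_SS/I\le d$ and (ii) $\depth_SS/J\ge d+1$ --- breaks down exactly where the content of the lemma lies, and you have left that part unproved. Once $|B\setminus E|\ge 8$ (the case $|B\setminus E|<8$ is dispatched by \cite[Theorem 2.4]{Sh}), the hypothesis $(B\setminus E)\subset W\cup\{x_jf_1,\dots,x_jf_4\}$ forces $|B\cap W|\ge 4$, and by the structure result from the proof of \cite[Lemma 3.2]{PZ2} this pins the generators to one of two configurations: $f_i=vx_i$, $i\in[4]$, with $\deg v=d-1$, or $f_i=vx_ix_6$ for $2\le i\le 4$ and $f_1=vx_1x_4$ with $\deg v=d-2$. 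In the first (and main) configuration, after reducing to $v=1$ one has $S/I=K[x_5,\dots,x_n]/(E)$, whose depth is typically $>d$, so your (i) fails precisely in the generic situation; the claim $\height I\ge n-d$ has no basis in the hypothesis, which constrains only the degree-$(d+1)$ part of $I\setminus J$, not the supports of the generators. Your (ii) is not automatic either: for $n=4$, $d=1$, $f_i=x_i$, $E=\emptyset$, $J=(x_1x_3,x_1x_4,x_2x_3,x_2x_4)$ the hypothesis holds with $b=x_1x_2$, yet $\depth_SS/J=1<d+1$. So the ``dichotomy'' you defer to is in fact the whole proof, and the sentence asserting that $I/J$ becomes a module over $S/L$ with $L$ generated by few variables is an expectation, not an argument.

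What the paper actually does in the forced configurations is different. It reduces to $v=1$ via $\depth_SI/J=\deg v+\depth_{S'}((I:v)\cap S')/((J:v)\cap S')$, observes that the hypothesis then forces $J$ to contain $(x_1,\dots,x_5)(x_6,\dots,x_n)$ so that everything is computed in $S''=K[x_1,\dots,x_5]$, and works with $I''=(f_1,\dots,f_4)$, $J''=J\cap I''$ rather than with $S/I$ and $S/J$: there $J''\cap S''$ has at most two generators, whence $\depth_{S''}S''/(J''\cap S'')\ge 3$ while $\depth_{S''}S''/(I''\cap S'')=1$, giving $\depth_SI''/J''=2$, and the conclusion passes to $I/J$ through the Depth Lemma applied to $0\to I''/J''\to I/J\to I/(J,I'')\to 0$ (the last term being generated by the degree-$2$ monomials of $E$). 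The second configuration is narrowed down to the single ideal $J=(x_1x_2x_6,x_1x_3x_6,x_1x_2x_4,x_1x_3x_4)$ and settled by a direct (SINGULAR) computation. None of this case analysis appears in your proposal, so the proof as written has a genuine gap.
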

\begin{proof}
 If   $|B\setminus E|<2r=8$ then $\depth_SI''/J''\leq 2 $ by \cite[Theorem 2.4]{Sh}. Assume that $|B\setminus E|\geq 8$. Our hypothesis gives $|B\cap W|\geq 4$.
First assume that $5\leq |B\cap W|\leq 6$ and we get that let us say $f_i=vx_i$, $ 1\leq i\leq 4$ for some monomial $v$ of degree $d-1$ (see the proof of \cite[Lemma 3.2]{PZ2}). Then
$$\depth_SI/J=\deg v+\depth_{S'}((I:v)\cap S')/((J:v)\cap S'),$$
 $S'=K[\{x_i:i\in ([n]\setminus \supp v)\}]$ and it is enough to show the case $v=1$, that is $d=1$.

We may assume that $f_i=x_i$, $i\in [4]$ and $j=5$ since $b\not\in W$. It follows that  $(B\setminus E)\subset W\cup \{b,x_2x_5,x_3x_5,x_4x_5\}$.
Set $I''=(x_1,\ldots,x_4)$, $J''=J\cap I''$.
  Note that $J\supset (x_1,\ldots,x_5)(x_6,\ldots,x_n)$ and so $\depth_SI''/J''=\depth_{S''} (I''\cap S'')/(J''\cap S'')$ for $S''=K[x_1,\ldots,x_5]$.

 Then $J''\cap S''$ is generated by at most two monomials and so $\depth_{S''}S''/(J''\cap S'')\geq 3$. Since $\depth_{S''}S''/(I''\cap S'')=1$ it follows that $\depth_SI''/J''=\depth_{S''} (I''\cap S'')/(J''\cap S'')=2$.
Therefore  $\depth_SI/J\leq 2$ either when $E=\emptyset$ or by the Depth Lemma since $I/(J,I'')$ is generated by monomials of $E$ which have degrees $2$.

Now assume that $|B\cap W|=4$, let us say  $B\cap W=\{w_{14},w_{23},w_{24},w_{34}\}$. Then we may suppose that  $f_i=vx_ix_6$, $2\leq i\leq 4$ and $f_1=vx_1x_4$ for some monomial $v$ of degree $d-2$. As above we may assume that $v=1$ and $n=6$.
If $j=6$ then $b=w_{14}$ which is impossible. If let us say $j=2$ then $(B\setminus E)\subset W\cup \{b,x_2x_3x_6,x_2x_4x_6\}$ and so $|B\setminus E|<8$, which is false.

Thus $j\not \in\{1,\ldots,4,6\}$ and we may assume that $j=5$.
It follows that $J\subset (x_1x_2x_6,x_1x_3x_6,x_1x_2x_4,x_1x_3x_4)$, the inclusion being strict only if $|B\setminus E|<8$ which is not the case. Thus  $J= (x_1x_2x_6,x_1x_3x_6,x_1x_2x_4,x_1x_3x_4)$ and a computation with SINGULAR shows that $\depth_SI/J=3$ in this case.
\hfill\ \end{proof}

 Next we put together the above lemmas to get the proof of Proposition \ref{ml}. Assume that $\sdepth_SI/J\leq d+1$.
 We may suppose always that $P_b$  satisfies the properties mentioned in Lemma \ref{i0}.  Applying Lemma \ref{ii2} and Remark \ref{ii'} and changing $a_1$ if necessary we may suppose that no bad path starts from $a_1$.  By  Lemma \ref{v} changing $a_1$ by $a_p$ we may suppose that for every $i=2,3,4$ one of the following statements holds

1) $U_{1}\cap (u_i)=\emptyset$,

2) $U_{1}\cap (u_i)\not =\emptyset$
and  there exists $b_i\in T_1\cap (f_i)$ with $h(b_i)\in (u_i)$,

3)  $U_{1}\cap (u_i)\not =\emptyset$ and every monomial of  $U_1\cap (u_i)$ has
 all its divisors from $B\cap (f_i)$  contained in  $\{u_2,u'_2,\ldots,u_4,u'_4\}$.

Mainly we study  case 3) the other two cases are easier  as we will see later. Suppose that  $U_{1}\cap (u_2)\not =\emptyset$ and every monomial of  $U_{1}\cap (u_2)$ has
 all its divisors from $B\cap (f_2)$  contained in  $\{u_2,u'_2,\ldots,u_4,u'_4\}$. Let $m\in U_{1}\cap (u_2)$, let us say $m=h(a_e)$ for some path $a_1,\ldots,a_e$.
 be as in case 3). We may suppose that $U_1\cap (u'_2)=\emptyset$ because otherwise we may assume as in Lemma \ref{iv} that all divisors of $c'_2$ are in the enlarged $T'_1$ of $T_1$ and so $c'_2$ is preserved.  As in the proof of Lemma \ref{iv''} one of the following statements holds:

  $1')$ $U_1\cap (u_2)= \{m\}$, $m\in (u_2)\cap (u_3)$, $u_3=w_{23}$, $m\not\in (u_4,u'_4)$ and there exists ${\tilde a}_3\in T_1\cap (f_3)$ dividing $m$ with ${\tilde a}_3=a_e$,

 $2')$ $U_1\cap (u_2)= \{m\}$, $m\in (u_2)\cap (u_3)$, $u_3=w_{23}$, $m\not\in (u_4,u'_4)$ and there exists ${\tilde a}_3\in T_1\cap (f_3)$ dividing $m$  with ${\tilde a}_3\not=a_e$,

 $3')$ $U_1\cap (u_2)= \{m\}$,  $m\in (u_2)\cap (u_4)$, $u_4=w_{24}$, $m\not\in (u_3,u'_3)$ and there exists ${\tilde a}_4\in T_1\cap (f_4)$ dividing $m$  with ${\tilde a}_4=a_e$,

 $4')$ $U_1\cap (u_2)= \{m\}$,  $m\in (u_2)\cap (u_4)$, $u_4=w_{24}$, $m\not\in (u_3,u'_3)$ and there exists ${\tilde a}_4\in T_1\cap (f_4)$ dividing $m$ with ${\tilde a}_4\not =a_e$,

  $5')$  $m=w_{24}\in (u_2)\cap (u_3)\cap (u_4)$, $u_3=w_{23}$, $u_4=w_{34}$ and there exists ${\tilde a}_4\in T_1\cap (f_4)$ dividing $m$  with $h({\tilde a}_4)=m$,

$6')$  $m=w_{24}\in (u_2)\cap (u_3)\cap (u_4)$, $u_3=w_{23}$, $u_4=w_{34}$ and there exists ${\tilde a}_4\in T_1\cap (f_4)$ dividing $m$  with $h({\tilde a}_4)\not=m$,

   $7')$  $m=\omega_1\in C_3$, $u_2=w_{24}$, $u_3=w_{23}$.

In  subcase $1')$  change in $P_b$ the intervals $[f_3,c'_3]$, $[{\tilde a}_3,m]$ with $[f_3,m]$, $[u'_3,c'_3]$. The new $T_1''=T_1\setminus \{{\tilde a}_3\}$ corresponds to $U_1''=U_1\setminus \{m\}$  which has empty intersection with $(u_2)$ by our assumption. If $T_1''$ is not empty then we may go on with $T_1''$ instead $T_1$, the advantage being that now we have no problem with $u_2$. If $T_1''=\emptyset$ then $e=1$ and the path $a_1$ is maximal. Since $m\not \in(u_4,u'_4)$ we must have $u_2=x_kf_2$ for some $k$ (we can also have  $w_{12}=x_kf_2$) and so $m=x_kw_{23}$, ${\tilde a}_3=x_kf_3$. If $E\not=\emptyset$ then we may change $a_1$ by a monomial of $E$. Assume that  $E=\emptyset$. If $c'_3=x_tw_{23}$ for some $t$ then $x_tf_2\in B$ since it divides $c'_3$. If $t=k$ then $m=c'_3$. Thus $t\not= k$, $x_tf_2\not\in \{b,u_2,u'_2,\ldots,u_4,u'_4\}$ and we may change   $a_1$ by $x_tf_2$ and the new $T_1''$ will be not empty.  If $c'_3\in C_2$ we may  find also a divisor $b'\in B\setminus  \{b,u_2,u'_2,\ldots,u_4,u'_4\}$ dividing $c'_3$ and changing $a_1$ by  $b'$ we will get the new $T_1''$ not empty.  Remains to assume that $c'_3\in C_3$. Then $u'_3=w_{34}$ and $b''=w_{24}$ is either in $\{u_2',u_4,u_4'\}$, or we may change $a_1$ by $b''$ as above. Suppose that $u'_2=w_{24}$. Then $x_kf_4\in B$. If $x_kf_4\not\in  \{b,u_2,u'_2,\ldots,u_4,u'_4\}$  we may change   $a_1$ by $x_kf_4$. Otherwise, let us say $u_4=x_kf_4$ and  $c'_4=x_kw_{14}$.  We get $x_kf_1\in B\setminus  \{u_2,u'_2,\ldots,u_4,u'_4\}$ and if $b\not = x_kf_1$ then we may change as above $a_1$ by $x_k f_1$. If $b=x_k f_1$ then note that $B\supset \{w_{23},w_{24}.w_{34},w_{14},b,x_kf_2,x_kf_3,x_kf_4\}$. If there exists a monomial $b'\in B\setminus (W\cup \{b,x_kf_2,x_kf_3,x_kf_4\})$ then change $a_1$ by $b'$. Otherwise $B\subset W\cup \{b,x_kf_2,x_kf_3,x_kf_4\}$ and we apply Lemma \ref{vi}.

Therefore in this subcase changing $P_b$ ($u_3$ is preserved and the new $u'_3$ is $b_3$) and passing from $T_1$ to $T_1''$ there exist no problem with $u_2$. As in Lemma \ref{iv} we may suppose that only one from $U_1''\cap (u_3)$,  $U_1''\cap (u'_3)$ is nonempty because otherwise we preserve the new $c'_3$, that is $m$. If let us say $U_1''\cap (u_3)=\{m'\}$, and all divisors of $m'$ from $B\cap (f_3)$ are contained  in $\{u_3,u'_3,u_4,u'_4\} $ then $m'\in (u_3)\cap (u_4)$, $u_4=w_{34}$ and there exists ${\tilde a}_4\in T_1''\cap (f_4)$ dividing $m'$. If $h({\tilde a}_4)=m'$ then as above change in $P_b$ the intervals $[f_4,c'_4]$, $[{\tilde a}_4,m']$ with $[f_4,m']$, $[u_4',c'_4] $. Clearly ${\tilde T}_1=T_1''\setminus \{{\tilde a}_4\}$ has empty intersection with $(u_3)$ and similarly to above we may suppose that ${\tilde T}_1\not =\emptyset$.
 In this way we arrive to the situation when we will not meet case 3) for $2\leq i\leq 4$.

In subcase $2')$ we have $a_e\in E$ and  $a_{e+1}={\tilde a}_3\in T_1$. Take $T_{a_{e+1}}$ instead $T_1$. If $a_e$ will not appear anymore in $T_{a_{e+1}}$ then $U_{a_{e+1}}\cap (u_2)=\emptyset$ and the problem is solved. Otherwise, if $a_v=a_{e}$ for some $v>e+1$ then change in $P_b$ the intervals $[a_i,h(a_i)]$, $e\leq i\leq v$ with $[a_{i+1},h(a_i)]$, $e\leq i<v$, $[a_e,m_v]$ we see that the new $a_{e}$ is the old $a_{e+1}$, that is we reduced to the subcase $1')$. Subcases $3')$, $4')$ are similar to $1')$, $2')$.

 Change in subcase $5')$ (as in subcase $1')$) the intervals $[f_4,c'_4]$, $[{\tilde a}_4,m]$ of  $P_b$  with $[f_4,m]$, $[u'_4,c'_4]$. The new $T_1''=T_1\setminus \{{\tilde a}_4\}$ corresponds to $U_1''=U_1\setminus \{m\}$  which has empty intersection with $(u_2)$ by our assumption. The proof continues as in $1')$. Similarly, $6')$ goes as  $2')$.

 In subcase $7')$ if $\omega_1\in W$ (see Example \ref{exe}) then  it has  $4$ divisors from $B\setminus E$  and so one of them is not in $\{u_2,u'_2,\ldots,u_4,u'_4\}$  and we may proceed as in subcases $5')$, $6')$. So we may assume that $\omega_1\not \in W$. Then  either $u_4=w_{34}$ and then $a_e\in E$ which is false by our assumption, or $w_{34}\in T_1$. Set $a_{e+1}=w_{34}$.  We proceed as in $2')$ taking $T_{a_{e+1}}$ if $a_e\not \in T_{a_{e+1}}$ or otherwise changing $P_b$ we reduce to the situation when $h(a_{e+1})=m$. Then change in $P_b$ the intervals $[f_4,c'_4]$, $[a_{e+1},m]$ with $[f_4,m]$, $[u'_4,c'_4]$ and as usual the new    $U_1''=U_1\setminus \{m\}$   has empty intersection with $(u_2)$.

Thus we may assume that for all $2\leq i\leq 4$ we are  in cases 1), 2). When we are in case 2) there exists $b_i\in T_1\cap (f_i)$ with $h(b_i)\in (u_i)$ and we may consider the  intervals
$[f_i,c'_i]$, which are disjoint since $b_i$ are different by Lemma \ref{v}.  Moreover, they  contain at most one monomial from  $w_{12}, w_{13},w_{14}$ by Remark \ref{v'}, which is useful next.
Remains to study  those $i$ with  $ U_1\cap (f_i)\not =\emptyset$ but $U_1\cap (u_i,u'_i)=\emptyset$. If $U_1\cap (u_2,u'_2,\ldots,u_4,u'_4)=\emptyset$ then we apply Lemma \ref{i}. Suppose that $ U_1\cap (f_2)\not =\emptyset$ and $U_1\cap (u_2,u'_2)=\emptyset$ but we found already $b_3$ and possible $b_4$ as in 2). If $h(b_3)\not \in (f_2)$ then choosing $b'\in B\cap (f_2)$ we see that the intervals $[f_2,h(b')]$, $[f_3,h(b_3)]$ are disjoint. A similar result holds if there exists $b_4$ and $h(b_4)\not \in (f_2)$.

Assume that $h(b_3) \in (f_2)$. Then we may suppose that $u_3=w_{23}$ and $h(b_3)=x_kw_{23}$ for some $k\in [n]\setminus \supp w_{23}$. We claim  that $b''=x_kf_2\not\in \{u_2,u'_2,\ldots,u_4,u'_4\} $.
It is clear that $b''\not\in \{u_2,u'_2,u_3,u'_3\} $. If $b''\in \{u_4,u'_4\}$ then $b''=w_{24}=u_4$, let us say. Thus $h(b_3)\in (u_3,u_4)$ but $h(b_3)\not\in (u_2,u'_2)$. This means that the monomial $h(b_3)\in U_1\cap (u_4)$ is in the situation 3) (similarly to $1')$) which is not possible as we assumed. This shows our claim.

Therefore, $b''\in T_1\cap (f_2)$ because it divides $h(b_3)$. If $h(b'')\in (f_3)$ then $h(b'')=kw_{23}=h(b_3)$ which is impossible.
If $h(b'')\in (f_4)$ then $h(b'')=x_tw_{24}$ for some $t$. As we saw above $b''\not = w_{24}$ and so $t=k$. If $b_4$ is not done by 2) then it is enough to note that the intervals $[f_2,h(b'')]$, $[f_3,h(b_3)]$ are disjoint. Assume that $b_4$ is given already from 2) and $u_4=w_{24}$. Then ${\tilde b}=x_kf_4\not = u'_4$ because otherwise  $h(b'')=h(b_4)$. We see that ${\tilde b}\not \in \{u_2,u'_2,\ldots,u_4,u'_4\} $ and so $\tilde b$ is in $T_1\cap (f_4)$. But $h({\tilde b})\not\in (u_4)$ because it is different of $h(b_4)$. Then the intervals $[f_2,h(b'')]$, $[b_3,h(b_3)]$, $[f_4,h({\tilde b})]$ are disjoint.
As in Lemma \ref{i} we find if necessary an interval $[f_1,c]$ disjoint of the rest.

  Suppose as in Lemma \ref{i} that $[r]\setminus \{j\in [r]: U_1\cap (f_j)\not=\emptyset\}=\{k_1,\ldots,k_{\nu}\}$ for some  $1\leq k_1<\ldots <k_{\nu}\leq 4$, $0\leq \nu\leq 4$. Set  $I' = (f_{k_1},\ldots,f_{k_{\nu}},G_1)$,  $J'= I' \cap J$,
With the help of the above disjoint intervals, $P_b$ induces on  $I/(I',J)$  a partition $P'_b$ with sdepth $d+2$. It follows that $\sdepth_SI'/J'\leq d+1$ using \cite[Lemma 2.2]{R}.   By Lemma \ref{dep} we get $\depth_SI/(J,I')\leq d+1$ and we are done.
$\hfill\ \square$

\begin{Remark} \label{M'} {\em Note that in $P'_b$, all divisors from $B$ of the new $c'_i$ are in $T_1\cup \{u_2,u'_2,\ldots,u_4,u'_4\}$. If one old $c'_i$ has already this property then we may keep it.}
\end{Remark}

\begin{Remark}\label{M''}{\em If $\omega_1\in (C_3\setminus W)\cap (E)$ then we may have indeed a problem. For example, if $u_2=w_{24}$, $u_3=w_{23}$, $u_4=w_{34}$, $\omega_1=h(a_1)$ for some $a_1\in E$ but $\omega_1\not \in h(E\setminus \{a_1\})$ then the path $a_1$ is maximal, $T_1=\{a_1\}$ and our theory fails to solve this case if we cannot change  $P_b$ in order to have    $\{u_2,u_3,u_4\}\not =\{w_{24},w_{23},w_{34}\}$.}
\end{Remark}
\begin{Example} \label{bad} {\em
We continue Example \ref{b2}.  If we take as in the above proof $I'=(b,x_5x_6,x_5x_7)$ and $J'=I'\cap J$ we have  the disjoint intervals $[x_i,c'_i]$, $2\leq i\leq 4$ and to conclude that $h $ induces a partition on $I/(I',J)$, which has sdepth $3$ we need an interval $[x_1,c'_1]$ disjoint of the other ones. But this is hard because there are too many $w_{1i}$ among $\{u_2,u'_2,\ldots,u_4,u'_4\}$. We must change one $c'_i$ with one $m\in (U_1\cap (x_i))\setminus (x_1)$. The only possibility is to take $m_2=x_2x_3x_5$. Since $m\in (u'_2)\setminus (u_3,u_3',u_4,u'_4)$  we may change somehow $c'_2$ with $m$. This is not easy since $m_2=h(a_2)$, $a_2=x_3x_5\not\in (x_2)$. As in Lemma \ref{iv} note that $a_1|m_3=h(a_3)$ and replacing in $P_b$ the intervals $[a_i,m_i]$, $i\in [3]$, $m_1=h(a_1)$ with the intervals $[a_1,m_3]$, $[a_2,m_1]$, $[a_3,m_2]$ we see that  $x_2x_5$ - the new $a_2$, belongs to  $(x_2)$. Thus  we may change in $P_b$ the intervals $[x_2,c'_2]$, $[x_2x_5,m_2]$ with $[x_2,m_2]$, $[u_2,c'_2]$. The new $T_1$ is $T'_1=(T_1\cup \{x_1x_2\})\setminus \{x_2x_5\}$. Note that all divisors from $B\cap (x_2)$ of the new $c'_2$ which are different from the new $u_2,u'_2$ are contained in the new $T_1$. As above $[x_i,c'_i]$ are disjoint intervals and changing in $P_b$ the intervals $[x_1x_2,x_1x_2x_3]$, $[x_1x_5,x_1x_2x_5]$ with $[x_1,x_1x_2x_5]$ we get a partition with sdepth $3$ on $I/(I',J)$.
}
\end{Example}

\section{Main results}

We start with an elementary lemma closed to Lemma \ref{vi}.
\begin{Lemma} \label{ele}
Let $r$ be arbitrarily chosen, $r'\leq r$, $t\in [n]\setminus \cup_{i=1}^{r'}\supp f_i$ and $I'=(f_1,\ldots,f_{r'})$, $J'=J\cap I'$. Suppose that all $w_{ij}$, $1\leq i<j\leq r'$ are in $B$ and different. Then the following statements hold
\begin{enumerate}
\item{} there exists a monomial $v$ of degree $d-1$ such that $f_i\in (v)$ for all $i\in [r']$,
\item{} if $x_k(f_1,\ldots, f_{r')}\subset J$ for all $k\in [n]\setminus (\{t\}\cup(\cup_{i=1}^{r'}\supp f_i))$ then $\depth_SI'/J'\leq d+1$.
\end{enumerate}
\end{Lemma}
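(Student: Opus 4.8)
The plan is to prove (1) by a sunflower argument and then reduce (2) to it, the real work being a depth estimate after two standard reductions.

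For (1): set $A_i=\supp f_i$, so $|A_i|=d$, and since $w_{ij}\in B$ has degree $d+1$ we get $|A_i\cap A_j|=d-1$, i.e. $A_i\setminus A_j$ is a single variable whenever $i\neq j$. Fix the index $1$ and, for $j\ge 2$, let $\{p_j\}=A_1\setminus A_j$. First I would show $p_j$ is independent of $j$: if $p_j\neq p_k$ with $2\le j\neq k$, then $p_k\in A_1\cap A_j$ and $p_j\in A_1\cap A_k$, and forcing $|A_j\cap A_k|=d-1$ shows that $A_j$ and $A_k$ share the same unique element $q$ outside $A_1$; but then $\supp w_{1j}=A_1\cup\{q\}=\supp w_{jk}$, contradicting that the $w_{ij}$ are pairwise distinct. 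With $p$ the common value, the squarefree monomial $v$ with $\supp v=A_1\setminus\{p\}$ has degree $d-1$ and divides every $f_i$, since $A_1\setminus\{p\}=A_1\cap A_j\subset A_j$ for $j\ge 2$. (For $r'\le 2$ this is immediate.)

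For (2): by (1), write $f_i=vx_{\ell_i}$ with $\deg v=d-1$, the $\ell_i$ distinct and not in $\supp v$. As in the proof of Lemma \ref{vi},
\[
\depth_S I'/J'=(d-1)+\depth_{S'}\bigl((I':v)\cap S'/(J':v)\cap S'\bigr),\qquad S'=K[x_j:j\notin\supp v],
\]
and $(I':v)\cap S'=(x_{\ell_1},\dots,x_{\ell_{r'}})S'=:\bar I'$, so it suffices to prove $\depth_{S'}(\bar I'/\bar J')\le 2$ for $\bar J':=(J':v)\cap S'$. The hypothesis says $x_kf_i\in J$ for all $i$ and all $k\in R:=[n]\setminus(\{t\}\cup\bigcup_i\supp f_i)$, i.e. $x_kx_{\ell_i}\in\bar J'$; hence $\mathfrak a:=(x_k:k\in R)$ annihilates $M:=\bar I'/\bar J'$, so $\depth_{S'}M=\depth_{\bar S}M$ where $\bar S:=S'/\mathfrak a=K[z_1,\dots,z_{r'},t]$ is a polynomial ring in $r'+1$ variables ($z_i:=x_{\ell_i}$), and $M=(z_1,\dots,z_{r'})_{\bar S}/L$ with $L\subset(z_1,\dots,z_{r'})\bar S$ the squarefree monomial ideal generated by the $\mathfrak a$-free generators of $\bar J'$. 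Two facts about $L$ drive the estimate: $L$ has no generator of degree $\le 1$ (a linear generator would be $z_i\in\bar J'$, i.e. $f_i\in J$, impossible as $\deg f_i=d<d+1$), and $z_iz_j\notin L$ for $i\neq j$ (this is precisely $w_{ij}\notin J$, i.e. $w_{ij}\in B$).

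The remaining step — which I expect to be the main obstacle — is to get $\depth_{\bar S}M\le 2$ from these two facts, via a dichotomy on the variable $t$. If $z_it\in L$ for every $i$, then $tM=0$, so $\depth_{\bar S}M=\depth_{K[z_1,\dots,z_{r'}]}M$; moreover $M\cong\mathfrak m/\widetilde L$, where $\mathfrak m$ is the maximal ideal of $K[z_1,\dots,z_{r'}]$ and $\widetilde L$ is generated by the $t$-free generators of $L$. These all have degree $\ge 3$ (a $t$-free squarefree generator of degree $2$ would be one of the excluded $z_iz_j$), so $\widetilde L\subsetneq\mathfrak m$ and $\depth(K[z_1,\dots,z_{r'}]/\widetilde L)\ge 1$; then $0\to M\to K[z_1,\dots,z_{r'}]/\widetilde L\to K\to 0$ and the Depth Lemma force $\depth M\le 1$. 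Otherwise some $z_{i_0}t\notin L$; then the Stanley--Reisner complex of $L$ contains all edges $\{z_i,z_j\}$ and the edge $\{z_{i_0},t\}$, so it is connected with no isolated vertex, whence $\depth_{\bar S}(\bar S/L)\ge 2$; on the other hand $0\to M\to\bar S/L\to\bar S/(z_1,\dots,z_{r'})\bar S=K[t]\to 0$ gives, by the Depth Lemma, $1=\depth K[t]\ge\min(\depth M-1,\ \depth \bar S/L)$, so $\depth M\ge 3$ would force $\depth \bar S/L\le 1$, a contradiction. Either way $\depth_{\bar S}M\le 2$, hence $\depth_S I'/J'\le(d-1)+2=d+1$. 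The reduction identities and the Depth-Lemma bookkeeping are routine; the delicate point is recognizing that $f_i\notin J$ and $w_{ij}\in B$ are exactly the inputs needed for the two horns of this $t$-dichotomy.
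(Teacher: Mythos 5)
Your proof is correct, and its skeleton is the same as the paper's: prove the sunflower structure $f_i=vx_{\ell_i}$, divide out $v$, kill the variables outside $\{t\}\cup\bigcup\supp f_i$ using the hypothesis of (2), and compute depth in the small ring $K[x_{\ell_1},\ldots,x_{\ell_{r'}},x_t]$. The differences are that you do strictly more work, and usefully so. For (1) the paper simply cites the argument of \cite[Lemma 3.2]{PZ2}, while you give a self-contained proof; your derivation that the petals $p_j$ coincide (via the distinctness of the $w_{ij}$) is exactly the right use of the hypotheses. For (2) the paper asserts the equality $\depth_SI'/J'=d-1+\depth_{S''}(x_1,\ldots,x_{r'})S''$, which tacitly assumes that $J'$ contributes no relations among the surviving variables, i.e.\ that your ideal $L$ is zero; the stated hypotheses do not force this (for instance $x_tw_{ij}$ or the lcm of three of the $f_i$ could lie in $J$). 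Your dichotomy on $t$ closes exactly this gap: in the case $tM=0$ you get $\depth M\le 1$ from the generators of $\widetilde L$ having degree $\ge 3$, and otherwise the observation that $f_i\notin J$ and $w_{ij}\in B$ make the Stanley--Reisner complex of $L$ connected without isolated vertices gives $\depth \bar S/L\ge 2$ and hence $\depth M\le 2$ by the Depth Lemma. So your argument proves the stated inequality in full generality, whereas the paper's one-line computation is only an equality under the additional (unstated) assumption $L=0$; both yield the bound $\depth_SI'/J'\le d+1$ needed in the application.
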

\begin{proof}
As in the proof of \cite[Lemma 3.2]{PZ2} we may suppose that  $f_i=vx_i$ for $i\in [r]$ and some monomial $v$ of degree $d-1$, that is (1) holds.
 It follows that
 $$\depth_SI'/J' =d-1+\depth_{S''}(x_1,\ldots,x_{r'})S''= d+1$$ where  $S''=K[x_1,\ldots,x_{r'},x_t]$.
 \hfill\ \end{proof}

\begin{Theorem} \label{m1} Conjecture \ref{c} holds for $r\leq 4$, the case $r\leq 3$ being given in Theorem \ref{ap}.

\end{Theorem}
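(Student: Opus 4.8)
The plan is as follows. For $r\le 3$ the statement is exactly Theorem \ref{ap}, so take $r=4$, assume $\sdepth_SI/J=d+1$, and aim at $\depth_SI/J\le d+1$. By Proposition \ref{m'} we are done unless some $\omega_i$, the least common multiple of the three generators other than $f_i$, lies in $(C_3\setminus W)\cap (E)$; after renumbering say $\omega_1\in(C_3\setminus W)\cap(E)$. Exactly as in the proof of Proposition \ref{m'} we reduce, by \cite[Theorem 1.3]{P1}, \cite[Theorem 2.4]{Sh} and \cite[Lemma 1.6]{PZ}, to the situation $8=2r\le s\le q+4$ with $E$ consisting only of monomials of degree $d+1$.

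The structural starting point is that $\omega_1\in C_3\setminus W$ forces its degree $d+1$ divisors from $B\setminus E$, namely $w_{23},w_{24},w_{34}$, to be pairwise distinct elements of $B\setminus E$: if two of them coincided, or if one had degree $d+2$, then $\omega_1$ itself would lie in $W$. Hence by Lemma \ref{ele}(1) there is a monomial $v$ of degree $d-1$ with $f_2,f_3,f_4\in(v)$, and if a further $\omega_j\in(C_3\setminus W)\cap(E)$ the same argument together with the minimality of the generators $f_i$ forces all four to lie in $(v)$. Next I would try to dispose of the obstruction by renumbering: any $b\in B\cap(f_1,\dots,f_4)$ with $b\notin W$ lies in exactly one ideal $(f_i)$ (two would give $b\in W$), so if $\omega_1$ is the only bad $\omega$ and such a $b$ exists with $i\ne 1$, we relabel that $f_i$ as $f_1$ — its new partner triple is one of $\omega_2,\omega_3,\omega_4$, which is not in $(C_3\setminus W)\cap(E)$ — and invoke Proposition \ref{ml} (its condition (2) being arranged via \cite[Lemma 1.1]{PZ2} as in Proposition \ref{m'}); if the resulting $b$ fails hypothesis (3) of Proposition \ref{ml}, then $\sdepth_SI_b/J_b\le d+1$, so $\depth_SI_b/J_b\le d+1$ by Theorem \ref{ap}, and the Depth Lemma applied to $0\to I_b/J_b\to I/J\to I/(J,I_b)\to 0$ (the last term being $(f_1)/((f_1)\cap(J,I_b))$, of depth $\ge d+1$ since $b\notin(J,I_b)$) yields $\depth_SI/J\le d+1$. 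Whenever Proposition \ref{ml} applies, its conclusion gives either $\sdepth_SI/J\ge d+2$, contradicting our hypothesis, or a nonzero $I'\subsetneq I$ with $\depth_SI/(J,I')\ge d+1$ for which either $\sdepth_SI'/J'\le d+1$, whence $\depth_SI'/J'\le d+1$ by Theorem \ref{ap} or induction on $s$, or already $\depth_SI'/J'\le d+1$; in either case the Depth Lemma on $0\to I'/J'\to I/J\to I/(J,I')\to 0$ gives $\depth_SI/J\le d+1$.

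It remains to treat the rigid situation where no renumbering removes the obstruction; by the discussion above this is the special case $f_i\in(v)$ for all $i\in[4]$ with $\deg v=d-1$ singled out in the Introduction. Localizing at $v$ — legitimate because every generator of $I$ outside $E$ is a multiple of $v$, so passing to $(I:v)$, $(J:v)$ over the subring on the complementary variables loses nothing — reduces us to $d=1$, $v=1$, $f_i=x_i$, $I=(x_1,x_2,x_3,x_4)+(E)$. Here I would first apply \cite[Lemma 1.1]{PZ2} to assume $C\subset(f_2,f_3,f_4,E)$ and $C\subset(f_1,\dots,f_4,E\setminus\{a\})$ for every $a\in E$. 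If there is a variable $x_t$ with $t\notin\{1,2,3,4\}$, $E\subset(x_t)$ and $(B\setminus E)\cap(x_t)\ne\emptyset$, then Lemma \ref{el} (valid since Conjecture \ref{c} holds for $r'<4$) gives $\depth_SI/J\le d+1$. In the remaining subcases I would run the argument of Section~2: it produces $\sdepth_SI/J\ge d+2$ or the required $I'$ unless we fall into the configuration of Remark \ref{M''}, i.e. $T_1=\{a_1\}$ with $a_1\in E$, $h(a_1)=\omega_1$ and $\{u_2,u_3,u_4\}=\{w_{24},w_{23},w_{34}\}$ with $P_b$ unchangeable. In that last event I would show that any element of $B\setminus E$ not in $W\cup\{x_jf_1,x_jf_2,x_jf_3,x_jf_4\}$, where $b=x_jf_1$ and $j\notin\supp f_1$, would supply a legal modification of $P_b$; hence $B\setminus E\subseteq W\cup\{x_jf_1,x_jf_2,x_jf_3,x_jf_4\}$, and then Lemma \ref{vi} (or Lemma \ref{ele}(2) when $E$ already forces $x_k(f_1,\dots,f_4)\subset J$ for the remaining variables $x_k$) gives $\depth_SI/J\le d+1$ directly.

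The step I expect to be the main obstacle is controlling the configuration of Remark \ref{M''}: one must enumerate the rearrangements of $P_b$ that keep $a_1\in E$ the only member of $T_1$ (in the spirit of Lemmas \ref{ii}--\ref{v}) and show that when none of them is available the ideal $I$ is rigid enough for Lemma \ref{vi} to apply. A secondary but unavoidable point is checking that the localization at $v$ genuinely reduces the special case to $d=1$ and preserves the inequalities $8\le s\le q+4$ used throughout.
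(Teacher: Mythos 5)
Your overall strategy matches the paper's: reduce via Proposition \ref{m'} and Proposition \ref{ml} to the obstruction $\omega_i\in(C_3\setminus W)\cap(E)$, use the distinctness of $w_{23},w_{24},w_{34}$ in $B\setminus E$ to get $f_2,f_3,f_4\in(v)$ with $\deg v=d-1$ (this part of your argument is sound and is what the paper does via the argument of \cite[Lemma 3.2]{PZ2}), and then treat the resulting rigid configuration separately. The renumbering step and the Depth Lemma bookkeeping are also as in the paper.

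The genuine gap is in your handling of the rigid case, which is where essentially all of the content of this theorem lives. You assert that in the configuration of Remark \ref{M''} every element of $B\setminus E$ outside $W\cup\{x_jf_1,\dots,x_jf_4\}$ "would supply a legal modification of $P_b$", concluding $B\setminus E\subset W\cup\{x_jf_1,\dots,x_jf_4\}$ and invoking Lemma \ref{vi}. This is precisely the claim that needs proof, and it is not true in the generality you need: the paper does not reduce the bad configuration to Lemma \ref{vi}. Instead it first shows (using the assumption that the obstruction cannot be removed by renumbering) that at least three of the $\omega_i$ lie in $C_3\setminus W$, hence $|B\cap W|=6$ and all $u'_i$ have the form $vx_ix_{k_i}$; it disposes of the subcase $B\cap(f_i)\subset W$ ($i=3,4$) by a separate exact-sequence argument using that $(J:f_i)$ is then generated by variables (giving the first term depth $d+3$); and it then splits into three cases according to whether the $k_i$ equal $1$ or exceed $4$, in each case performing explicit cyclic reassignments of the intervals $[a_i,m_i]$, $[f_j,c'_j]$ along a path ending at $\omega_1$ so that afterwards every monomial of the relevant $U_1$ has a divisor in $B\setminus E$ outside $\cup_j[f_j,\tilde c'_j]$, with an additional layer of care when that path is bad (passing to a new base monomial $b'=a_{p+1}$ and the partition $P_{b'}$). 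None of this is present in your sketch; you flag it yourself as "the main obstacle", but a flagged obstacle is not an argument. As written, the proposal proves the theorem only in the cases already covered by Propositions \ref{m'} and \ref{ml} plus Lemmas \ref{el} and \ref{vi}, and leaves open exactly the configurations (e.g.\ $k_2=k_3=k_4=1$, so $c'_2=\omega_3$, $c'_3=\omega_4$, $c'_4=\omega_2$ with $\omega_1=h(a_1)$ for $a_1\in E$) that forced the author to write this proof at all.
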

\begin{proof} Suppose that $\sdepth_SI/J=d+1$ and $E\not =\emptyset$, the case $E=\emptyset$ is given in Proposition \ref{m'}. The proofs of Proposition \ref{ml} and Proposition \ref{m'} show that we get $\depth_SI/J\leq d+1$, that is Conjecture \ref{c} holds, when we may choose $b_i\in (B\cap (f_i))\setminus W$ such that $\omega_i\not\in (C_3\setminus W)\cap (E)$. Suppose that we choose $b_1 \in (B\cap (f_1))\setminus W$ but $\omega_1\in (C_3\setminus W)\cap (E)$. In the last part of the proof of Proposition \ref{ml} (see $7')$ and also Remark \ref{M''})  a problem appears when $m=\omega_1\in T_1$ and let us say $u_2=w_{24}$, $u_3=w_{23}$, $u_4=w_{34}$. As in the proof of \cite[Lemma 3.2]{PZ@}  we
may assume that $f_i=vx_i$ for $2\leq i\leq 4$ and  some monomial $v$ of degree $d-1$. If let us say $x_tf_2\in B$ for some $t\not\in \cup_{i=2}^4\supp f_i$ then either $tf_2=w_{12}$, or $tf_2\not \in W$. In the first case we may suppose, as in the proof of Lemma \ref{vi}, that
 one of the following statements hold:

1) $f_i=vx_i$, $i\in [4]$ for some monomial $v$ of degree $d-1$,

2) $f_i=px_ix_5$, $2\leq i\leq  4$, $f_1=px_1x_2$ for some monomial $p$ of degree $d-2$.

In both cases we see that if $B\cap (f_2,f_3,f_4)\subset W$ then we have $x_k(f_2,\ldots f_{4})\subset J$ for all $k\in [n]\setminus (\{1\}\cup(\cup_{i=2}^{4}\supp f_i))$.
 By  Lemma \ref{ele}  we get $\depth_SI'/J'\leq d+1$ for $I'=(f_2,f_3,f_4)$, $J'=J\cap I'$  which gives $\depth_SI/J\leq d+1$ since $\depth_SI/(J,I')\geq d+1$, $b$ being not in $(J,I')$. Thus $B\cap (f_2,f_3,f_4)\not \subset W$ and we may choose, let us say $b_2 \in (B\cap (f_2))\setminus W$ and again we may get $\depth_SI/J\leq d+1$ if $\omega_2\not\in (C_3\setminus W)\cap (E)$.

Thus we may assume that $\omega_1,\omega_2\in (C_3\setminus W)\cap (E)$. In particular $B\cap W$ consists in at least $5$ different monomials and so we may suppose that 1) above holds and $u'_2=vx_2x_{k_2}$, $u'_3=vx_3x_{k_3}$, $u'_4=vx_4x_{k_4}$ for some $k_i\in ([n]\setminus (\{2,3,4\}\cup \supp v) $.
 If $k_2=k_3=k_4=1$ then $c'_2=\omega_3$, $c'_3=\omega_4$, $c'_4=\omega_2$, that is all $\omega_i$ are in $C_3\setminus W$. If let us say $k_3>4$ then $b''=x_{k_3}f_3\not\in W$ and we are ready if $\omega_3\not \in (C_3\setminus W)\cap (E)$. Thus we may assume that $\omega_3 \in (C_3\setminus W)\cap (E)$. Consequently in all cases we may assume that $3$ from $\omega_i$ are in $C_3\setminus W$. In particular $|B\cap W|=6$. If $B\cap (f_i)\subset W$ for some $i=3,4$ then $(J:f_i)$ is generated by $x_j$ with $j\not \in  (\{1,\ldots,4\}\cup \supp v)$. It follows that in the exact sequence
$$0\to (f_i)/J\cap (f_i)\to I/J\to I/(J,f_i)\to 0$$
the first term has depth $\deg v+4=d+3$ and sdepth $\geq d+2$. By \cite[Lemma 2.2]{R}
we get $\sdepth_SI/(J,f_i)\leq d+1$ and so the last term in the above sequence  has depth $\leq d+1$ by Theorem \ref{ap}. Using the Depth Lemma we get $\depth_SI/J\leq d+1$ too.

Therefore, we may find $b_i\in (B\cap (f_i))\setminus W$, $i=3,4$ and as above we may suppose that $\omega_i\in (C_3\setminus W)\cap (E)$, let us say $\omega_i\in ({\tilde a}_i)$ for some ${\tilde a}_i\in E$. We consider three cases depending on $k_i$.

{\bf Case 1}, when $k_i=1$ and $k_j>4$ for  some $i,j=2,3,4$, $i\not=j$.

Assume that $k_2=1$, that is $c'_2=\omega_3$ and $k_4>1$. Then $a_1=vx_1x_4\not \in \{u_2,u'_2,\ldots,u_4,u'_4\}$  is a divisor of $c'_2$. Start the usual proof with $a_1$ and if  $\omega_1\not\in U_1$ then we get $\depth_SI/J\leq d+1$.  Suppose that there exists a (possible bad) path $a_1,\ldots,a_e$, $m_i=h(a_i)$ such that $m_e=\omega_1$. Changing in $P_b$ the intervals $[a_i,m_i]$, $i\in [e]$, $[f_2,c'_2]$, $[f_3,c'_3]$ with $[a_{i+1},m_i]$, $i\in [e-1]$, $[f_1,c'_2]$, $[f_2,m_e]$, $[u'_3,c'_3]$ we see that the new ${\tilde c}'_i$, $i=1,2,4$ contain two from $\omega_i$. Choose a new $a_1$ and start to build $U_1$. This time any monomial from $U_1$ has at least one divisor from $B\setminus E$ which is not in $\cup_{j=1,2,4} [f_j,{\tilde c}'_j]$ so the usual proof goes.

{\bf Case 2}, $k_2,k_3,k_4>4$.

 Then $a_1=vx_1x_4\not \in \{u_2,u'_2,\ldots,u_4,u'_4\}$. Let $m_1=h(a_1)=a_1x_k$ for some $k$. If $k=k_4$ then changing in $P_b$ the intervals $[f_4,c'_4]$,
 $[a_1,m_1]$ with $[f_4,m_1]$, $[u_4,c'_4]$ we see that $u_4=w_{34}$ does not divide the new $c'_4$ and so we have no problem with $\omega_1$.

 Suppose that  $k\not =k_4$ and $k>4$ then $a_2=vx_4x_k\not \in \{u_2,u'_2,\ldots,u_4,u'_4\}$.
   If there exists no path $a_2,\ldots,a_e$, $m_i=h(a_i)$ with $m_e=\omega_1$ then we proceed as usual.
  Otherwise, let $a_2,\ldots,a_e$, $m_i=h(a_i)$ be a (possible bad) path with $m_e=\omega_1$. Changing in $P_b$ the intervals  $[a_i,m_i]$, $i\in [e]$, $[f_3,c'_3]$, $[f_4,c'_4]$ with $[a_{i+2},m_{i+1}]$, $i\in [e-2]$,  $[f_3,m_e]$, $[f_4,m_1]$, $[u'_3,c'_3]$, $[u'_4,c'_4]$ we see that any monomial from $C$ has at least one divisor from $B\setminus E$ which is not in $\cup_{j=2,3,4} [f_j,{\tilde c}'_j]$ so the usual proof goes, where ${\tilde c}'_j$ denotes the new $c'_j$ for $j=3,4$ and ${\tilde c}'_2=c'_2$.

 Remains to study the case when $k\not =k_4$ and $k=2$ or $k=3$. Assume that $k=2$, that is  $m_1=\omega_3$. Similarly we may assume that $a_2=vx_1x_2$, $m_2=h(a_2)=a_2x_{3}=\omega_4$ and $a_3=vx_1x_3$, $m_3=h(a_3)=a_3x_{4}=\omega_2$. If there exists no path $a_3,\ldots,a_e$, $m_i=h(a_i)$ with $m_e=\omega_1$ then we proceed as usual. Otherwise,  let  $a_3,\ldots,a_e$, $m_i=h(a_i)$ be a (possible bad) path with $m_e=\omega_1$.  Changing in $P_b$ the intervals  $[a_i,m_i]$, $i\in [e]$, $[f_j,c'_j]$, $j=2,3,4$ with $[a_{i+3},m_{i+2}]$, $i\in [e-3]$, $[f_1,m_1]$,  $[f_3,m_2]$, $[f_4,\omega_1]$, $[u'_2,c'_2]$, $[u'_3,c'_3]$, $[u'_4,c'_4]$  we arrive in a case similar to the next one.

{\bf Case 3},  $k_2=k_3=k_4=1$.

Thus $c'_2=\omega_3\in (a_1)$ for  $a_1={\tilde a}_3$. If there exists a path $a_1,\ldots,a_e$, $m_i=h(a_i)$ with $m_e=\omega_1$ then changing in $P_b$ the intervals  $[a_i,m_i]$, $i\in [e]$, $[f_2,c'_2]$, $[f_3,c'_3]$ with   $[a_{i+1},m_i]$, $i\in [e-1]$, $[a_1,c'_2]$,  $[f_1, c'_3]$, $[f_2,\omega_1]$ we get  the new ${\tilde c}'_1=\omega_4$, ${\tilde c}'_2=\omega_1$ and ${\tilde c}'_4=c'_4=\omega_2$. Thus we may change the three $c'_i$ to be any three monomials from  $\omega_j$.

Assume that the above path is bad, let us say $m_p\in (b)$ for $p<e$ and as in Lemma  \ref{ii2} we may suppose that $a_{p+1}\not\in E$,   $T_{a_{p+1}}\cap \{a_1,\ldots,a_p\}=\emptyset$  and there exists no bad path starting with $a_{p+1}$. Changing $P_b$ as above we see that  the new ${\tilde c}_i'$ are $\omega_1,\omega_2,\omega_4$ and the $\omega_3\not\in U'_{a_{p+1}}$, where $U'_{a_{p+1}}$ corresponds to $T'_{a_{p+1}}=T_{a_{p+1}}\setminus \{a_{p+1}\}$. Set $b'=a_{p+1}$. In fact changing in the new $P_b$ the intervals $[b',m_p]$ with $[b,m_p]$ we get a partition $P_{b'}$ on $I_{b'}/J_{b'}$, where $I_{b'}J_{b'}$  are defined as usually but we could have $b'\in W$.
 There exists no bad path in $P_{b'}$ because otherwise this induces one in $P_b$.
 We may proceed as before since all monomials from $U'_{b'}$ has   at least one divisor from $B\setminus E$ which is not in $\cup_{j=1,2,4} [f_j,{\tilde c}'_j]$.
Similarly, we do for any $a_1\in E$ dividing one from $c'_2,c'_3,c'_4$ and remains to assume that there exists  no bad path starting with a divisor from $E$ of any $c'_i$, $i=2,3,4$.

Now suppose that $a_1=b_3$ and consider $T_1,U_1$ as usual and we may suppose that we are still in Case 3 but with $(\tilde c'_j)$, $j=1,3,4$.
If there exists no bad path starting with $a_1$ and
$m_1=h(a_1)\in (W)$, let us say $m_1\in (w_{13})$ then changing in $P_b$ the intervals $[a_1,m_1]$, $[f_1,\tilde c'_1]$ with $[f_1, m_1]$, $[\tilde u_1,\tilde c'_1]$, $\tilde u_1=w_{12}$ we arrive in a case similar to Case 1. If $m_1\not\in (W)$ then
 assume that in $P_b$ there exist the intervals $[f_1,\omega_2]$, $[f_2,\omega_4]$, $[f_4,\omega_1]$. Then
$[f_3,m_1]$ is  disjoint of these intervals. Enlarge $T_1$ to $\tilde T_1$ adding all monomials from $B$ connected by a path which is not bad, with
the divisors from $E$ of $(\omega_j)$, $j=1,2,4$. Thus taking $I'=(B\setminus
(\tilde T_1\cup W))$, $J'=J\cap I'$ we get $\sdepth_SI/(J,I')\geq d+2$ which is enough as usual.

If there exists a bad path $a_1,\ldots, a_e$, $m_i=h(a_i)$, $m_e=\omega_1$, $m_p\in (b)$, $p<e$ then as above we may assume that  $a_{p+1}\not\in E$,   $T_{a_{p+1}}\cap \{a_1,\ldots,a_p\}=\emptyset$  and there exists no bad path starting with $a_{p+1}$. Moreover, we may  choose  $a_{p+2}\not\in E$ when $e>p+1$
because $m_{p+1}\not=\omega_1$. Taking as above $b'=a_{p+1}$ and the partition $P_{b'}$ given on $I_{b'}/J_{b'}$ we see that $T_{a_{p+2}}\cap (f_1,\ldots,f_4)\not =\emptyset$ and we reduce to the above situation with $T_{a_{p+2}}$ instead $T_1$. If  $p\geq e-1$ then $\omega_1\not \in U_{a_{p+2}}$ and   so there exists no problem.
  \hfill\ \end{proof}

\begin{Theorem} \label{m2} Conjecture \ref{c}  holds for $r=5$ if
there exists $t\in [n]$ such that $t\not\in \cup_{i\in [5]} \supp f_i$, $(B\setminus E) \cap (x_t)\not =\emptyset$ and $E\subset (x_t)$.
\end{Theorem}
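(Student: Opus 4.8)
The plan is to obtain Theorem \ref{m2} as an immediate consequence of Lemma \ref{el} combined with Theorem \ref{m1}. First I would note that the hypotheses of Theorem \ref{m2}, namely that there exists $t\in[n]$ with $t\notin \cup_{i\in[5]}\supp f_i$, $(B\setminus E)\cap (x_t)\not=\emptyset$ and $E\subset (x_t)$, are exactly the hypotheses of Lemma \ref{el} taken in the case $r=5$. Thus the only ingredient missing before we can invoke that lemma is the assumption that Conjecture \ref{c} holds for all $r'<5$.

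Next I would supply that ingredient from the results already established: Theorem \ref{m1} gives Conjecture \ref{c} for $r\leq 4$ (and $r\leq 3$ is already in Theorem \ref{ap}), so Conjecture \ref{c} holds for every $r'\in\{1,2,3,4\}$, i.e.\ for all $r'<5$. With this, Lemma \ref{el} applies directly: assuming $\sdepth_SI/J=d+1$ we obtain $\depth_SI/J\leq d+1$, which is precisely Conjecture \ref{c} for $r=5$ under the stated hypotheses.

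The only point worth a careful look, rather than a genuine obstacle, is internal to the proof of Lemma \ref{el}: that argument proceeds by induction on $|E|$ and, in the last exact sequence, passes to an ideal $I'$ minimally generated by at most $r-e$ squarefree monomials of degree $d$ together with squarefree monomials of higher degree. One should check that $e\geq 1$ there, so that $I'$ has at most $r-1=4$ generators of degree $d$ and Theorem \ref{m1} (or Theorem \ref{ap}, or the induction on $|E|$) indeed applies; but this is forced by $(B\setminus E)\cap (x_t)\not=\emptyset$, which guarantees that some $x_tf_i$ lies in $B_t$. Hence no new difficulty arises, and Theorem \ref{m2} is simply the statement that Lemma \ref{el} and Theorem \ref{m1} combine.
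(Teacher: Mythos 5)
Your proposal is correct and is exactly the paper's own argument: Theorem \ref{m2} is proved by invoking Lemma \ref{el}, whose hypothesis that Conjecture \ref{c} holds for all $r'<5$ is supplied by Theorem \ref{m1}. Your additional check that $e\geq 1$ inside Lemma \ref{el} is a sound observation but introduces nothing beyond what the paper already does.
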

\begin{proof} Apply Lemma \ref{el}, since Conjecture \ref{c} holds for $r\leq 4$ by
 Theorem \ref{m1}.
\hfill\ \end{proof}

\begin{Example} {\em
Let $n=8$, $E=\{x_6x_7,x_7x_8\}$, $I=(x_1,x_2,x_3,x_4,x_5,E),$

$ J=(x_1x_6,x_1x_8,x_2x_8,x_3x_6,x_3x_8,x_4x_6,x_4x_7,x_4x_8,x_5x_6,x_5x_7,x_5x_8)$.

\noindent We see that we have
$B=$
$$\{x_1x_2,x_1x_3,x_1x_4,x_1x_5,x_1x_7,x_2x_3,x_2x_4,x_2x_5,x_2x_6,x_2x_7,x_3x_4,x_3x_5,x_3x_7,x_4x_5\}\cup \{E\},$$
$$C=\{x_1x_2x_3,x_1x_2x_4,x_1x_2x_5,x_1x_2x_7,x_1x_3x_4,x_1x_3x_5,x_1x_3x_7,x_1x_4x_5,x_2x_3x_4,x_2x_3x_5,$$
$$x_2x_3x_7,x_2x_4x_5,x_2x_6x_7,x_3x_4x_5,x_6x_7x_8\}$$ and so
$ r=5$,  $q=15$, $s=16 \leq q+r$.
We have $\sdepth_S I/J = 2$, because otherwise the monomial $x_2x_6$ could enter  either in $ [x_2,x_2x_6x_7]$, or in $ [x_2x_6,x_2x_6x_7]$ and in both cases remain the  monomials of $E$ to enter in an interval ending with $x_6x_7x_8$, which is impossible.
Then $\depth_SI/J\leq 2$ by the above theorem since $E\subset (x_7)$ and for instance $x_1x_7\in (B\setminus E)\cap (x_7)$.}
\end{Example}

\address{Dorin Popescu,  Simion Stoilow Institute of Mathematics of Romanian Academy, Research unit 5,
 P.O.Box 1-764, Bucharest 014700, Romania \\
\email{dorin.popescu@imar.ro}


\begin{thebibliography}{99}
\bibitem{BKU} W.\ Bruns, C.\ Krattenthaler, J.\ Uliczka, {\em Stanley decompositions and Hilbert depth in the Koszul complex}, J. Commutative Alg., {\bf 2} (2010), 327-357.
    \bibitem{Ci} M.\ Cimpoeas, {\em The Stanley conjecture on monomial almost complete intersection ideals}, Bull.
Math. Soc. Sci. Math. Roumanie, {\bf 55(103)} (2012), 35-39.

\bibitem{HVZ} J.\ Herzog, M.\ Vladoiu, X.\ Zheng, {\em How to compute the Stanley depth of a monomial ideal,}  J.  Algebra, {\bf 322} (2009), 3151-3169.
\bibitem{HPV} J.\ Herzog, D.\ Popescu, M.\ Vladoiu, {\em Stanley depth and size of a monomial ideal}, Proc. Amer.
Math. Soc., {\bf 140} (2012), 493-504.
\bibitem{IM} B.\ Ichim, J. J.\ Moyano-Fern\'andez, {\em How to compute the multigraded Hilbert depth of a module},  Math. Nachr. {\bf 287}, No. 11-12, 1274-1287 (2014), arXiv:AC/1209.0084.
\bibitem{IZ}  B.\ Ichim,  A.\ Zarojanu, {\em  An algorithm for computing the multigraded Hilbert depth of a module,} Experimental Mathematics, 23:3, (2014), 322-331,  arXiv:AC/1304.7215v2.

\bibitem{Is} M.\ Ishaq, {\em Values and bounds of the Stanley depth}, Carpathian J. Math. {\em 27} (2011), 217-224.
\bibitem{AP1} A.\ Popescu, {\em An algorithm to compute the Hilbert depth },  J. Symb. Comput.,{\bf 66}, (2015), 1-7,  arXiv:AC/1307.6084.



  \bibitem{AP} A.\ Popescu, D.\ Popescu, {\em Four generated, squarefree, monomial ideals }, 2013,\\  in  "Bridging Algebra, Geometry, and Topology",  Editors Denis Ibadula, Willem Veys, Springer Proceed. in Math., and Statistics, {\bf 96},  2014, 231-248,
arXiv:AC/1309.4986v5.

\bibitem{P2}  D.\ Popescu, {\em Stanley depth of multigraded modules}, J. Algebra {\bf 312} (10) (2009) 2782-2797.

\bibitem{P0} D.\ Popescu, {\em Graph and depth of a square free monomial ideal},  Proceedings of AMS, {\bf 140} (2012), 3813-3822.

\bibitem{P} D.\ Popescu, {\em Depth of  factors of square free  monomial  ideals}, Proceedings of AMS {\bf 142} (2014), 1965-1972,arXiv:AC/1110.1963.
\bibitem{P1} D.\ Popescu, {\em Upper bounds of depth of monomial ideals},  J. Commutative Algebra, {\bf 5}, 2013, 323-327, arXiv:AC/1206.3977.
\bibitem{PZ} D.\ Popescu, A.\ Zarojanu, {\em Depth of some square free monomial ideals}, Bull. Math. Soc. Sci. Math. Roumanie, {\bf 56(104)}, 2013,117-124.

\bibitem{PZ1} D.\ Popescu, A.\ Zarojanu, {\em Depth of  some special  monomial  ideals},  Bull. Math. Soc. Sci. Math. Roumanie,  {\bf 56(104)}, 2013, 365-368.
 \bibitem{PZ2} D.\ Popescu, A.\ Zarojanu, {\em Three generated, squarefree, monomial ideals},  to appear in  Bull. Math. Soc. Sci. Math. Roumanie, {\bf 58(106)} (2015), no 3, arXiv:AC/1307.8292v6.

\bibitem{R} A. Rauf, {\em Depth and Stanley depth of multigraded modules}, Comm.  Algebra, {\bf 38} (2010),773-784.
\bibitem{Sh} Y.H. \ Shen, {\em Lexsegment ideals of Hilbert depth 1}, (2012), arXiv:AC/1208.1822v1.
\bibitem{S} R.\ P.\ Stanley, {\em Linear Diophantine equations and local cohomology}, Invent. Math. {\bf 68} (1982) 175-193.
\bibitem{U} J.\ Uliczka, {\em Remarks on Hilbert series of graded modules over polynomial rings}, Manuscripta Math., {\bf 132} (2010), 159-168.

\end{thebibliography}
 \end{document}